\newcommand{\stkout}[1]{\ifmmode\text{\sout{\ensuremath{#1}}}\else\sout{#1}\fi}
\newcommand{\ttup}[1]{\textup{(}#1\textup{)}}
\definecolor{dmagenta}{rgb}{.6,.0,.8}
\definecolor{dblue}{rgb}{.0,.0,.5}
\definecolor{mblue}{rgb}{.0,.0,.8}
\definecolor{ddblue}{rgb}{.0,.0,.4}
\definecolor{dred}{rgb}{.6,.0,.0}
\definecolor{dgreen}{rgb}{.0,.5,.0}
\definecolor{Eeom}{rgb}{.0,.0,.5}
\newtheorem{lemma}{Lemma}[section]
\newtheorem{theorem}{Theorem}[section]
\newtheorem{proposition}{Proposition}[section]
\newtheorem{corollary}{Corollary}[section]
\theoremstyle{definition}
\newtheorem{definition}{Definition}[section]
\newtheorem{assumption}{Assumption}[section]
\theoremstyle{remark}
\newtheorem{remark}{Remark}[section]
\numberwithin{equation}{section}
\newcommand{\compcent}[1]{\vcenter{\hbox{$#1\circ$}}}
\newcommand{\comp}{\mathbin{\mathchoice
{\compcent\scriptstyle}{\compcent\scriptstyle}
{\compcent\scriptscriptstyle}{\compcent\scriptscriptstyle}}}
\crefname{section}{Section}{Sections}
\crefname{subsection}{Section}{Sections}
\crefname{hypothesis}{Hypothesis}{Conditions}
\crefname{assumption}{Assumption}{Assumptions}
\crefname{lemma}{Lemma}{Lemmas}
\Crefname{figure}{Figure}{Figures}
\newcommand{\Act}{{\mathbb{U}}}                       
\newcommand{\Uadm}{{\mathfrak{U}}}                    
\newcommand{\Ussm}{{{\mathfrak{U}}_{\mathrm{ssm}}}}   
\newcommand{\RR}{\mathds{R}}      
\newcommand{\NN}{\mathds{N}}      
\newcommand{\ZZ}{\mathds{Z}}      
\newcommand{\DD}{\mathds{D}}
\newcommand{\Rd}{{\mathds{R}^{d}}}
\DeclareMathOperator{\Exp}{\mathbb{E}}
\DeclareMathOperator{\Prob}{\mathbb{P}}
\newcommand{\D}{\mathrm{d}}
\newcommand{\E}{\mathrm{e}}
\newcommand{\Ind}{\mathds{1}}     
\newcommand{\Ag}{{\mathcal{A}}}  
\newcommand{\sA}{\mathscr{A}}    
\newcommand{\fA}{{\mathfrak{A}}} 
\newcommand{\Cc}{\mathcal{C}}    
\newcommand{\fD}{{\mathfrak{D}}} 
\newcommand{\cF}{{\mathcal{F}}}  
\newcommand{\cG}{{\mathcal{G}}}  
\newcommand{\eom}{{\mathscr{G}}} 
\newcommand{\cH}{{\mathcal{H}}}  
\newcommand{\sH}{{\mathscr{H}}}  
\newcommand{\cI}{{\mathcal{I}}}  
\newcommand{\sI}{{\mathscr{I}}}  
\newcommand{\cK}{{\mathcal{K}}}  
\newcommand{\sK}{{\mathscr{K}}}  
\newcommand{\Lg}{\mathcal{L}}    
\newcommand{\cL}{{\mathscr{L}}}  
\newcommand{\cN}{{\mathcal{N}}}  
\newcommand{\cP}{{\mathcal{P}}}  
\newcommand{\cQ}{{\mathcal{Q}}}  
\newcommand{\cR}{{\mathcal{R}}}  
\newcommand{\rc}{{\mathscr{R}}}  
\newcommand{\cS}{{\mathcal{S}}}  
\newcommand{\Lyap}{{\mathcal{V}}}  
\newcommand{\sV}{\mathscr{V}}    
\newcommand{\fX}{{\mathfrak{X}}}
\newcommand{\cZ}{\mathcal{Z}}    
\newcommand{\fZ}{{\mathfrak{Z}}} 
\newcommand{\fZsm}{\mathfrak{Z}_{\mathrm{sm}}} 
\newcommand{\abs}[1]{\lvert#1\rvert}
\newcommand{\norm}[1]{\lVert#1\rVert}
\newcommand{\babs}[1]{\bigl\lvert#1\bigr\rvert}
\newcommand{\df}{\coloneqq}
\DeclareMathOperator*{\diag}{diag}
\newcommand{\order}{{\mathscr{O}}}
\newcommand{\grad}{\nabla}
\newlength{\dhatheight}
\newcommand{\ttl}{\Large
Optimal scheduling of critically loaded multiclass 
\textsl{GI/M/n+M} \\[3pt] queues in an alternating renewal environment}
\begin{document}

\title[Optimal scheduling of multiclass $GI/M/n+M$ queues with interruptions]{\ttl}
\author[Ari Arapostathis]
{Ari Arapostathis$^\dag$
}
\address{$^\dag$ Department of Electrical and Computer Engineering\\
The University of Texas at Austin\\
2501 Speedway, EERC 7.824\\
Austin, TX~~78712}
\email{ari@utexas.edu}

\author[Guodong Pang]
{Guodong Pang$^\ddag$
}
\author[Yi Zheng]{Yi Zheng$^\ddag$}
\address{$^\ddag$ The Harold and Inge Marcus Department of Industrial and
Manufacturing Engineering,
College of Engineering,
Pennsylvania State University,
University Park, PA 16802}
\email{$\lbrace$gup3,yxz282$\rbrace$@psu.edu}
\begin{abstract} 
In this paper, we study optimal control problems 
for multiclass $GI/M/n+M$ queues 
in an alternating renewal (up-down) random environment 
in the Halfin--Whitt regime.
Assuming that the downtimes are asymptotically negligible and 
only the service processes are affected, 
we show that the limits of the diffusion-scaled state processes under
non-anticipative, preemptive,
work-conserving scheduling policies, are controlled jump diffusions 
driven by a compound Poisson jump process. 
We establish the asymptotic optimality of the infinite-horizon discounted
and long-run average (ergodic) problems for the queueing dynamics. 

Since the process counting the number of customers in each class
is not Markov, the usual martingale arguments for
convergence of mean empirical measures cannot be applied.
We surmount this obstacle by demonstrating
the convergence of the generators of an augmented Markovian model  
which incorporates the age processes of the renewal interarrival times and downtimes. 
We also establish long-run average moment bounds 
of the diffusion-scaled queueing processes
under some (modified) priority scheduling policies.
This is accomplished via Foster--Lyapunov equations
for the augmented Markovian model. 
\end{abstract}

\subjclass[2000]{Primary: 90B22, 90B36, 60K37. Secondary: 60K25, 60J75, 60F17.}

\keywords{multiclass many-server queues, Halfin--Whitt (QED) regime,
service interruptions, renewal arrivals, alternating renewal process, jump diffusions,  
discounted cost, ergodic control, asymptotic optimality}

\maketitle

\section{Introduction}

There has been a lot of research activity on scheduling control problems for
queueing networks in the Halfin--Whitt regime.
The discounted problem for multiclass many-server queues was first studied in
 \cite{AMR04}.
See also the  work in \cite{Atar05,AMS09}.
For the ergodic control problem in the case of Markovian queueing networks
see \cite{ABP15,AP18,AP19}.
Scheduling control problems for queueing networks in random environments 
have also attracted much attention recently \cite{BGL14,RMH13,LQA17,ADPZ19}.
It is worth noting that in the study of asymptotic optimality in 
Markov-modulated environments, the scaling parameter depends on the
rate of the underlying Markov process;
see, for example, \cite{BGL14,ADPZ19,JMTW17}.

In this paper we consider queueing networks
operating in alternating renewal (up-down) random environments,
modeling service interruptions,
and with renewal arrivals.  
It is well known that for large-scale service systems, service interruptions
can have a dramatic impact on system performance \cite{PW09B}.
For single class queues and networks in an alternating renewal environment,
limit theorems have been studied in \cite{PW09,LPZ16,LP17,PZ16,PW09B}. 
To the best of our knowledge,
there are no studies on optimal scheduling control for multiclass many-server queues
in alternating renewal environments, or even ergodic control
in the Halfin--Whitt regime with arrivals that are renewal processes.

Specifically, we consider multiclass ($d$ classes) $GI/M/n + M$ queues 
with service interruptions in the Halfin--Whitt regime, 
where the arrival rate in each class and the number of servers in the pool are large, 
with a scaling parameter $n$, and
the service interruptions are asymptotically negligible of order $n^{\nicefrac{-1}{2}}$.
The service interruption is modeled as an alternating renewal process
constructed by regenerative `up' and `down' cycles.
In the `down' state, all servers stop functioning,
and new customers arrive, which may abandon the queue.
In the `up' state, the queueing system functions normally.
We assume that at least one class of customers has
a strictly positive abandonment rate.
The scheduling policy determines the allocation of servers to different classes
of customers.
We approximate the scheduling problem via the corresponding control problem of
the limiting jump diffusion in the heavy-traffic regime,
for which a sharp characterization of optimal Markov controls is available
\cite{APZ19a}, and use this to exhibit matching upper and lower bounds on the
optimal scheduling performance for the queueing dynamics.

In \cref{T3.1}, we establish a functional central limit theorem (FCLT) 
for the $d$-dimensional diffusion-scaled state processes
under work-conserving scheduling policies.
The limiting controlled processes are jump diffusions 
with piecewise linear drift and compound Poisson jumps. 
The proof of weak convergence relies on the construction of
a modified diffusion-scaled state process,
where we add the cumulative downtime to a diffusion-scaled state process 
without interruptions.
We show that the modified and original diffusion-scaled state processes
have the same weak limits,
which are governed by the jump diffusions described above.

The discounted and ergodic control problems for a large class of
jump diffusions arising from queueing networks in the Halfin--Whitt
regime have been studied in \cite{APZ19a}, and these results are essential
for establishing asymptotic optimality in the present paper.
In \cref{T3.2},
we show that the optimal value functions of the discounted problem for the
diffusion-scaled processes
converge to the corresponding function for the limiting jump diffusion.
The proof of asymptotic optimality follows the approach in \cite{AMR04},
which deals with
the discounted problem for multiclass $GI/M/n + M$ queues. 
An essential part of this proof involves moment bounds
for the diffusion-scaled state process, and
 the cumulative downtime process.

Asymptotic optimality for the ergodic control problem is more challenging.
The result is stated in \cref{T3.3}.
Here, long-run average moment bounds for the diffusion-scaled
state processes play a crucial role (see \cref{T4.1}). 
Typically, such bounds are obtained in the literature
via Foster-Lyapunov inequalities \cite{Atar11,AP18,ABP15,AP19,ADPZ19}.
However, since the process counting the number of customers in each class,
referred to as the queueing process, or state process, is not Markov,
we first construct a
sequence of auxiliary diffusion-scaled processes 
by adding the scaled residual time process of the alternating renewal process 
in the `down' state to the original process,
taking advantage of the fact that the long-run average moments of
the scaled residual time process are negligible as
the scaling parameter $n$ tends to infinity. 
We then consider the joint Markov process comprised of the
auxiliary diffusion-scaled state process and
the age processes of renewal arrival and alternating renewal processes,
and construct Foster--Lyapunov functions, which bear a resemblance to
the Lyapunov functions in \cite{Takis-99}.
In this, we assume that the mean residual life functions
are bounded, and use the criterion 
in \cite[Theorem~4.2]{MT-III} to show that the joint Markov processes
are positive Harris recurrent for all large enough $n$
under some (modified) priority scheduling policy. 
We apply a two-step scheduling: first,
the servers are allocated to the classes of customers with zero abandonment rate 
in such a manner that the servers used for each class do not exceed a certain proportion
dictated by the traffic intensity;   
second, a static priority rule is applied to allocate the remaining servers.
We show that the long-run average moments of the auxiliary
diffusion-scaled state processes are bounded under this scheduling policy.
We then establish a moment estimate for the difference between the auxiliary and original
diffusion-scaled processes, and proceed to show that the analogous
moment bounds
hold for the original diffusion-scaled processes.

To prove asymptotic optimality for the ergodic control problem,
we establish lower and upper bounds for the limits of the value functions 
(see equations \cref{ET3.3A,ET3.3B}).
For the proof of the lower bound, we show that the sequence of mean empirical measures of
the diffusion-scaled state processes is tight (see \cref{L5.2}), and 
any limit of mean empirical measures is an ergodic occupation measure
for the limiting jump diffusion.
This is analogous to the technique used in \cite{ABP15,AP18,AP19,ADPZ19}.
However, characterizing the limits of mean empirical measures (see \cref{T5.3})
is quite challenging here. 
Since we consider the diffusion-scaled processes with renewal arrivals
in an alternating renewal environment,
the martingale arguments in the above papers cannot be applied here.
Instead, we develop a new approach. 
Following the technique of the proof of ergodicity under
the specific scheduling policy described in the preceding paragraph,
we consider the generator of the joint Markov process of the
auxiliary diffusion-scaled state process, which incorporates
the residual time process, 
and the associated age processes of the renewal arrivals and 
the alternating renewal environment. 
We construct suitable test functions (see \cref{E-phi})
which involve the coefficients of variation of interarrival times, 
and proceed to show the convergence of generators.

For the proof of the upper bound, we adopt the spatial truncation technique
developed in \cite{ABP15}, which is also used in \cite{AP18,AP19,ADPZ19},
and is extended to jump diffusions in \cite{APZ19a}.
This involves a concatenated scheduling policy.
We first construct a continuous precise $\epsilon$-optimal control for the
ergodic control problem for the limiting jump diffusion (see \cref{T5.2}). 
Then, inside a compact set, 
we map this control to a scheduling policy for the diffusion-scaled process.
On the complement of this set, we apply the (modified) priority scheduling policy.
We show that the long run average moments of the diffusion-scaled state process
are bounded
under this concatenated scheduling policy (see \cref{C4.1}),
and the limit of mean empirical measures is the ergodic occupation measure 
of the limiting jump diffusion governed by the $\epsilon$-optimal control
(see \cref{L5.3}).
Here, the techniques used in establishing the long-run average moment bounds
under the (modified) priority scheduling policy, and the
convergence of mean empirical measures, play an important role.

\subsection{Organization of the paper}
The notation used in the paper is summarized in the next subsection. In \cref{S2},
we describe the model of multiclass many-server queues with service interruptions.
In \cref{S3}, we define the diffusion-scaled processes and associated
control problems, and
state the main results on weak convergence and asymptotic optimality.
In \cref{S4}, we summarize the ergodic properties of the limiting controlled
jump diffusion, and state the results concerning
long-run average moment bounds for the diffusion-scaled processes.
The proofs of \cref{T3.2,T3.3} are given in \cref{S5}. \cref{AppA} is devoted
to the proofs of \cref{L3.1,T3.1}. \cref{AppB} contains the proofs of \cref{L4.1,L5.2}.

\subsection{Notation}
We let $\abs{\,\cdot\,}$ and $\langle\,\cdot\,,\cdot\,\rangle$ denote
the standard Euclidean norm and the inner product in $\RR^{d}$,
respectively.
For $x\in\Rd$, we let $\norm{x} \df \sum_i\abs{x_i}$,
and $x'$ denote the transpose of $x$.
The symbols $\RR_+$, $\ZZ_+$, $\NN$, denote the
set of nonnegative real numbers, nonnegative integers, and
the set of natural numbers, respectively.
The indicator function of a set $A\in\Rd$ is denoted by  $\Ind_A$.
Given $a,b\in\RR$, the minimum (maximum) is denoted by $a\wedge b$ 
($a\vee b$), respectively,
$\lfloor a\rfloor$ denotes the integer part of $a$,
and $a^\pm \df (\pm a)\vee 0$.
The complement and closure
of a set $A\subset\Rd$ are denoted
by $A^{c}$ and $\Bar{A}$, respectively.
We use the notation 
$e_{i}$ to denote the vector with $i$-th entry equal to $1$
and all other entries equal to $0$.
We let $B_{r}$ denote the open ball of radius $r$ in $\RR^{d}$, 
centered at the origin.
For a process $\{X_t\}_{t\ge0}$, $\uptau(A)$ denotes the first exit time from
the set $A\subset\RR^{d}$, 
defined by $\uptau(A) \df \inf\;\{t>0\;\colon\, X_{t}\not\in A\}$,
and we let $\uptau_{r}\df \uptau(B_{r})$.

For a domain $D\subset\RR^{d}$,
the space $\Cc^{k}(D)$ ($\Cc^{\infty}(D)$), $k\ge 0$,
stands for the class of all real-valued functions on $D$ whose partial
derivatives up to order $k$ (of any order) exist and are continuous.
$\Cc^{k,r}(D)$ stands for the set of functions that are
$k$-times continuously differentiable and whose $k^{\mathrm{th}}$ derivatives are locally
H\"{o}lder continuous with exponent $r$.
We let $\Cc^k_c(D)$ denote the space of functions in $\Cc^k(D)$ with compact support,
and $\Cc_b^k$ the set of functions in $\Cc^k(D)$ whose partial derivatives
up to order $k$ are bounded.
For a nonnegative function $g\in\Cc(\RR^{d})$, 
$\order(g)$ denotes the space of functions
$f\in\Cc(\RR^{d})$ satisfying
$\sup_{x\in\RR^{d}}\;\frac{\abs{f(x)}}{1+g(x)}<\infty$.
By a slight abuse of notation, $\order(g)$ also denotes
a generic member of these spaces.

For $k\in\NN$, we 
let $\DD^k \df \DD(\RR_+,\RR^k)$ denote the space of 
$\RR^k$-valued c\'adl\'ag functions on $\RR_+$.
When $k=1$, we write $\DD$ for $\DD^k$.
Given a Polish space $E$, by $\cP(E)$ we denote the space
of probability measures on $E$, endowed with the Prokhorov metric.

\section{Multiclass \texorpdfstring{$GI/M/N+M$}{} queues with service interruptions}
\label{S2}

\subsection{The model and assumptions}

We consider a sequence of $GI/M/n + M$ queueing models 
with $d$ classes of customers.
Let $\sI\df\{1,\dotsc,d\}$.
For the $n^{\mathrm{th}}$ system, let $\{A_i^n(t)\}_{t\ge0}$ denote the
arrival process of class-$i$ customers.
We assume that the arrivals are mutually independent renewal
processes defined as follows. 
Let $\{{G}_{i,j}\colon j\in\NN\}$, $i\in\sI$,
be an i.i.d.\ sequence of strictly positive random variables with mean
$\Exp [{G}_{i}] = 1$ and finite (squared) coefficient of variation
$c_{a,i}^2\df \nicefrac{\mathrm{Var}({G}_{i})}{(\Exp [{G}_i])^2}$,
where $G_{i} \equiv G_{i,1}$.
Then, we define
\begin{equation}\label{E-arrival}
A_i^n(t) \,\df\, \max\,\Biggl\{m\ge 0 \colon \sum_{j=1}^{m} G_{i,j}
\le \lambda^n_i t\Biggr\}\,,\quad t\ge0\,,\ i\in\sI\,,
\end{equation}
where $\lambda^n_i >0$ denotes the arrival rate.
For each $n\in\NN$, 
the service and patience times of the class-$i$ customers are exponentially
distributed with parameters $\mu^n_i$ and $\gamma^n_i$, respectively.

We adopt the following standard assumption on the parameters
(see \cite{AMR04,ABP15,PW09}).

\begin{assumption}\label{A2.1}
(\emph{The Halfin--Whitt regime})
The parameters satisfy the following limits
for each $i\in\sI$ as $n\rightarrow\infty$:
\begin{equation*}
\begin{aligned}
& n^{-1}\lambda^n_i \;\rightarrow\; \lambda_i \,>\, 0\,, \quad
\mu_i^n \;\rightarrow\; \mu_i\, > \,0\,, \quad
\gamma_i^n \;\rightarrow\; \gamma_i \,\ge\, 0\,,\\
& 
n^{-\nicefrac{1}{2}}(\lambda_i^n-n\lambda_i) 
\;\rightarrow\; \Hat{\lambda}_i \,,
\quad n^{\nicefrac{1}{2}}(\mu_i^n-\mu_i) 
\;\rightarrow\; \Hat{\mu}_i\,,  \\
&
\frac{\lambda_i^n}{n\mu_i^n} 
\;\rightarrow\; \rho_i \,\df\, \frac{\lambda_i}{\mu_i}<1\,,
\quad \sum_{i=1}^{d}\rho_i\,=\, 1\,.
\end{aligned}
\end{equation*}
\end{assumption}

We assume that
$\inf_{n\in\NN}\gamma^n_d > 0$.
\cref{A2.1}, which is also known as the Quality-and-Efficiency-Driven regime, 
implies that the system is critically loaded and 
\begin{equation*}
\rho^n \;\rightarrow\; \Hat{\rho} \,\df\, 
\sum_{i=1}^{d} \frac{\rho_i\Hat{\mu}_i - \Hat{\lambda}_i}{\mu_i}
\in\RR\,, \quad \text{where}
\quad \rho^n\,\df\,\sqrt{n}\biggl(1 - \sum_{i=1}^{d}\frac{\lambda_i^n}{n\mu^n_i}\biggr)\,.
\end{equation*}

All queues are in the same up-down alternating renewal random environment.
Waiting customers may abandon at any time.
In the `up' state,
the system functions normally, and in the `down' state
 all servers stop, while
customers keep joining the queues and any jobs that have started service will wait
for the system to resume.   
For this reason, we also refer to this model
 as multiclass queues with service interruptions.  
Let  $\bigl\{(u_k^n,d_k^n)\colon k\in\NN\bigr\}$
be a sequence of i.i.d.\ positive random vectors denoting the up-down cycles, 
and define the \emph{counting process of downtimes} by
\begin{equation}\label{ES2.1A}
N^n(t)\,\df\, \max\,\bigl\{k\ge0\colon T_k^n\le t\bigr\}\,,\quad
\text{with\ \ } T_k^n\,\df\,\sum_{i=1}^{k}(u_i^n + d_i^n)\,,\ \ k\in\NN\,,
\end{equation}
 and $T_0^n\equiv0$. At time $0$, the system is in the `up' state.

\begin{assumption}\label{A2.2}
For each $n$ and $k$ in $\NN$,  $u^n_k$ and $d^n_k$ are independent,
$u^n_k$ is exponentially distributed with parameter $\beta_u^{n}$,
which converges to $\beta>0$ as $n\rightarrow\infty$.
We assume that $d^n_1 = \frac{1}{\vartheta^n}d_1$, 
with $d_1$ some nonnegative random variable satisfying $\Exp[d_1] = 1$,
and $\frac{\vartheta^n}{\sqrt{n}} \rightarrow \vartheta > 0$ as 
$n\rightarrow\infty$.
\end{assumption}

For $k\in\NN$, we
let $(\DD^k,M_1)$ and $(\DD^k, J_1)$
denote the space $\DD^k$ endowed with
the Skorokhod $M_1$ and $J_1$ topologies, respectively 
(see, for example, \cite{Patrick-99,WW-02}).
\cref{A2.2} implies that the service interruptions are asymptotically negligible, and
\begin{equation*}
N^n \;\Rightarrow\; N \;\quad\; \text{in} \quad (\DD,J_1) 
\quad \text{as\ }  n\to\infty\,, 
\end{equation*} 
where the limiting process $N$ is a Poisson process with rate $\beta$. 
Define the \emph{server availability process}
$\Psi^n \df \{\Psi^n(t)\colon t\ge0\}$ by
\begin{equation}\label{ES2.1B}
\Psi^n(t) \,=\, \left\{\begin{aligned} 
&1, \quad T^n_k \le t < T^n_k + u^n_{k+1} \,, \\
&0, \quad T^n_k + u^n_{k+1}\le t < T^n_{k+1} \,,
\end{aligned}\right.
\end{equation}
for $k\in\NN$.
We also define the cumulative up-time process
$C^n_{\mathsf{u}} = \{C^n_{\mathsf{u}}(t)\}_{t\ge0}$ by
$C^n_{\mathsf{u}}(t) \df \int_0^{t}\Psi^n(s)\,\D{s}$,
and the cumulative down-time process by
$C^n_{\mathsf{d}}(t) \df t - C^n_{\mathsf{u}}(t)$.
Let $F^{d_1}$ denote the distribution function of $d_1$.
By Lemma~2.2 in \cite{PW09}, we have
\begin{equation}\label{ES2.1C}
\sqrt{n}C^n_{\mathsf{d}} \,\Rightarrow\, {L} \quad \text{in\ }
 (\DD,M_1) \quad \text{as\ }  n\rightarrow\infty\,,
\end{equation}
where $\{{L}_t\}_{t\ge 0}$ is a compound Poisson process
with intensity $\Pi_L(\D{x})\D{t} = \beta\,F^{d_1}(\vartheta\D{x})\D{t}$,
where $\beta$ is given in \cref{A2.2}.

For the $n^{\mathrm{th}}$ system, we
denote the processes counting the total number of customers, those in queue, 
and those in service, by $X^n=(X^n_1,\dotsc,X^n_d)'$, 
$Q^n=(Q^n_1,\dotsc,Q^n_d)'$, and $Z^n=(Z^n_1,\dotsc,Z^n_d)'$, respectively. 
These processes satisfy the following constraints: 
\begin{equation}\label{ES2.1D}
X_i^n(t) \,=\, Q_i^n(t) + Z_i^n(t) \,, \quad
Q_i^n(t) \,\ge\, 0\,, \quad Z_i^n(t) 
\,\ge\, 0\,, \quad \text{and} \quad \langle{e},Z^n(t)\rangle \,\le\, n
\end{equation}
for each $t\ge 0$ and $i\in\sI$. 
We let 
\begin{equation}\label{ES2.1E}
\begin{aligned}
{S}_i^n(t,r) &\,\df\, 
S^n_{*,i}\biggl(\mu^n_i\int_0^{t}Z_i^n(s)\Psi^n(s)\,\D{s} + \mu^n_ir\biggr)\,, \\ 
{R}_i^n(t,r) &\,\df\, 
R^n_{*,i}\biggl(\gamma^n_i\int_0^{t}Q_i^n(s)\,\D{s} + \gamma^n_ir\biggr)\,,
\end{aligned}
\end{equation}
for $i\in\sI$, $t\ge0$, and $r\ge0$,
where $\{S^n_{*,i},R^n_{*,i} \colon i\in\sI,n\in\NN\}$ 
are Poisson processes with rate one.
We assume that for each $n\in\NN$, 
$\bigl\{X^n_i(0),A_i^n,S^n_{*,i},R^n_{*,i} \colon i\in\sI\bigr\}$
are mutually independent.
These processes are governed by the equation
\begin{equation}\label{E-dynamic}
X^n_i(t) \,=\, X_i^{n}(0) + A^n_i(t) - {S}_i^n(t)  -  {R}_i^n(t)
\end{equation}
for each $t\ge 0$, $n\in\NN$, and $i\in\sI$,
where $ {S}_i^n(t) \df {S}_i^n(t,0)$ and
${R}_i^n(t) \df {R}_i^n(t,0).$

\subsection{Scheduling policies}
A scheduling policy is identified with a $\ZZ_+^d$-valued
stochastic process $Z^n$
with c\'{a}dl\'{a}g sample paths, 
which satisfies \cref{ES2.1D}.
Let 
\begin{equation}\label{ES2.2A}
\Tilde{\tau}_i^n(t) \,\df\, \inf\{r \ge t\colon A_i^n(r) - A_i^n(r-)>0\}\,, \quad
\text{and} \quad 
\breve{\tau}^n(t) \,\df\, \inf\{r\ge t\colon \Psi^n(r) = 1\}\,, 
\end{equation}
for $i\in\sI$.
Recall the definitions of $C^n_{\mathsf{d}}$ in \cref{ES2.1C}, and
${S}^n$ and ${R}^n$ in \cref{ES2.1E}. 
Define the $\sigma$-fields 
\begin{equation}\label{E-filtration}
\begin{aligned}
\cF^n_t &\,\df\, 
\sigma\bigl\{X^n(0),A^n_i(t),{S}_i^n(s),
{R}_i^n(s),X^n_i(s) ,Z^n_i(s), \Psi^n(s), N^n(s)
\colon i\in\sI,0\le s\le t\bigr\}\vee\cN \,,\\[5pt]
\cG^n_t &\,\df\, \sigma\bigl\{ A_i^n(\Tilde{\tau}_i^n(t) + r)
- A_i^n\bigl(\Tilde{\tau}_i^n(t)\bigr),
{S}_i^n(\breve{\tau}^n(t),r) - S_i^n\bigl(\breve{\tau}^n(t)\bigr)\,, \\
&\mspace{100mu} {R}_i^n(\breve{\tau}^n(t),r) - {R}_i^n\bigl(\breve{\tau}^n(t)\bigr), 
C^n_{\mathsf{d}}(\breve{\tau}^n(t)+r) - C^n_{\mathsf{d}}\bigl(\breve{\tau}^n(t)\bigr)
\colon i\in\sI, r\ge 0\bigr\}\vee\cN \,,
\end{aligned}
\end{equation}
for $t\ge0$,
where 
$\cN$ is the collection of all $\Prob$-null sets.
We say that a scheduling policy $Z^n$ is \emph{non-anticipative} if
\begin{itemize}
\item[(i)]
$Z^n(t)$ is adapted to $\cF^n_t$,
\item[(ii)]
$\cF^n_t$ and $\cG_t^n$ are independent at each time $t\ge 0$,
\item[(iii)]
for each $i\in\sI$, and $t\ge 0$, the process 
${S}^n_i(\breve{\tau}^n(t),\cdot) - {S}^n_i(\breve{\tau}^n(t))$
agrees in law with $S^n_{*,i}(\mu^n_i\cdot)$, 
and the process ${R}^n_i(\breve{\tau}^n(t),\cdot) - {R}^n_i(\breve{\tau}^n(t))$
agrees in law with $R^n_{*,i}(\gamma^n_i\cdot)$.
\end{itemize}
Let $\tau^{n}_{i,k}$ denote the $k^{\mathrm{th}}$ jump time of 
$A^n_i - {S}_i^n - {R}_i^n$, for each $n\in\NN$ and $i\in\sI$.  
\Cref{E-dynamic} implies that $X_i^n(t) = X_i^n(0)$ for $0\le t \le \tau^{n}_{i,1}$, 
$X_i^n(t) = X_i^n(0) + \epsilon_1$ 
for $\tau^{n}_{i,1} \le t \le \tau^{n}_{i,2}$ and so forth, 
where $\epsilon_k$ denotes the jump size which takes values in a bounded set.
Note that 
the integrals in \cref{ES2.1E} are finite
by the definition of $\Psi^n$ in \cref{ES2.1B} and \cref{ES2.1D}.
Thus, given any non-anticipative scheduling policy $Z^n$,
and initial condition $X^n(0)$, 
there exists a unique solution to \cref{E-dynamic}.

For $x\in\ZZ^d_+$, we define the action set $\cZ^n(x)$ by
\begin{equation*}
\cZ^n(x)\,\df\,  \bigl\{z\in \ZZ_+^d\,\colon z\le x\,,\;
\langle e,z\rangle = \langle e,x\rangle\wedge n
\bigr\}\,.
\end{equation*}
A scheduling policy $Z^n$ is called \emph{admissible} 
if $Z^n(t)$ takes values in $\cZ^n\bigl(X^n(t)\bigr)$ at each $t$,
and is non-anticipative.
The set of admissible scheduling policies is denoted by $\fZ^n$.
Note that an admissible policy allows preemption, that is,
a server can interrupt service of a customer
at any time to serve some other class of customers.
In summary, given an admissible scheduling policy $Z^n$,
the process $X^n$ in \cref{E-dynamic} is well defined,
and we say that $X^n$ is governed by $Z^n$.

Next, we describe a well-known equivalent parameterization of
the set of admissible policies.
Let
\begin{equation*}
\cS \,\df\, \{u\in\Rd_{+}\,\colon \langle e,u \rangle = 1\}\,.
\end{equation*}
We also define
\begin{equation*}
\cS^n(x) \,\df\, \Bigl\{v\in\ZZ^d_{+}\,\colon v=\frac{y}{\langle e,x\rangle -n}\in\cS\,,
\; y\le x\,,\; y\in\ZZ^d_+\Bigr\}\,,\quad \text{if\ \ }\langle e,x\rangle >n \,,
\end{equation*}
and $\cS^n(x)=\{e_d\}$, if $\langle e,x\rangle \le n$.
Let $\Uadm^n$ denote the class of processes
$\{U^n(t)\}_{t\ge0}$ which are non-anticipative, in the sense of the
definition given above, and $U^n(t)$ takes values in $\cS^n\bigl(X^n(t)\bigr)$.
Then, each $U^n\in\Uadm^n$ determines a policy $Z^n\in\fZ^n$ via
\begin{equation*}
{Z}^n(t) \,=\, {X}^n(t) - {Q}^n(t)\,,\quad\text{with\ \ }
{Q}^n(t) \,=\, \bigl(\bigl\langle e,{X}^n(t) \bigr\rangle - n\bigr)^{+}{U}^n(t)\,.
\end{equation*}
This map is invertible, and its inverse is given by
\begin{equation*}
{U}^n(t)\,\df\,\begin{cases}
\frac{X^n(t) - Z^n(t)}{\langle e,X^n(t) \rangle - n} 
\quad &\text{for } \langle e,X^n(t) \rangle > n\,, \\[3pt]
e_d \quad &\text{for } \langle e,X^n(t) \rangle \le n\,.
\end{cases}
\end{equation*}
Therefore, as far as control problems are concerned, we can use
policies in $\Uadm^n$ or $\fZ^n$ interchangeably.

Next, we augment the state space, and
define the class of stationary Markov scheduling policies.
Recall the definitions of $A^n$, $N^n$, and $\Psi^n$
in \cref{E-arrival,ES2.1A,ES2.1B}, respectively.

\begin{definition}
Let $H^n_i(t)$ denote the age process for the class-$i$ customers, that is,
\begin{equation}\label{ES2.2B}
H^n_i(t) \,\df\, t - \frac{1}{\lambda^n_i}\sum^{A^n_i(t)}_{j=1}G_{i,j}\,,
\qquad  t \ge 0\,, \qquad i\in\sI\,,
\end{equation}
and define the age process $K^n$ for the alternating renewal process in the `down'
state by
\begin{equation}\label{ES2.2C}
K^n(t) \,\df\, \Biggl( t - \sum_{k=1}^{N^n(t)}(u_k^n + d_k^n) 
- u_{N^n(t)+1}^n \Biggr)^+\,, \qquad t\ge0\,.
\end{equation}
Then, $(A^n_i,H^n_i)$, $i\in\sI$, and $(\Psi^n,K^n)$ 
are strong Markov processes (see, e.g., \cite{Dai-95}). 
We say that a scheduling policy $Z^n\in\fZ^n$ is (stationary) Markov 
if
\begin{equation*}
Z^n(t) \,=\, z^n\bigl(X^n(t),H^n(t),\Psi^n(t),K^n(t)\bigr)
\end{equation*} 
for some $z^n\colon \ZZ^d_+\times\RR^d_+\times\{0,1\}\times\RR_+ 
\to \ZZ^d_+$,
and we let $\fZsm^n$ denote the class of these policies.
Under a policy $Z^n\in\fZsm^n$, the process
$({X}^n,H^n,\Psi^n,K^n)$ is Markov
with state space
\begin{equation*}
\bigl\{(x,h,\psi,k)\in \ZZ^d_+\times\RR^d_+\times\{0,1\}\times\RR_+ 
\colon k \equiv 0\text{ if } \psi=1 \bigr\}\,.
\end{equation*} 
Abusing the notation, when $z^n$ depends only
on its first argument, we simply write $Z^n(t) = z^n\bigl(X^n(t)\bigr)$.
\end{definition} 

\section{Diffusion-scaled processes and control problems}\label{S3}

Let $\Hat{X}^n$, $\Hat{Q}^n$, and $\Hat{Z}^n$
denote the diffusion-scaled processes defined by 
\begin{equation*}
\Hat{X}^n_i(t) \,\df\, n^{-\nicefrac{1}{2}}(X_i^n(t) - \rho_i n)\,,
\quad \Hat{Q}^n_i(t) \,\df\, n^{-\nicefrac{1}{2}}Q^n_i(t) \,,
\quad \Hat{Z}^n_i(t) \,\df\, n^{-\nicefrac{1}{2}}(Z^n_i(t)-\rho_{i} n)\,,
\end{equation*}
respectively,
for $t\ge0$ and $i\in\sI$.
It follows by \cref{E-dynamic} that the process $\Hat{X}^n_i$ takes the form
\begin{equation}\label{E-HatX}
\begin{aligned}
\Hat{X}_i^n(t) &\,=\, \;\Hat{X}_i^n(0) + \ell^n_it + \Hat{A}^n_i(t)
- \Hat{S}^n_i(t) - \Hat{R}^n_i(t) \\
&\mspace{50mu}- \mu^n_i\int_0^{t}\Hat{Z}^n_i(s)\Psi^n(s)\,\D{s}  
- \gamma^n_i\int_0^{t}\Hat{Q}^n_i(s)\,\D{s}
+ \Hat{L}^n_i(t)\,,\quad t\ge 0\,,
\end{aligned}
\end{equation}
where  $\ell^n_i \df n^{-\nicefrac{1}{2}}(\lambda^n_i - n\mu^n_i\rho_i)$,
\begin{equation*}
\begin{gathered}
\Hat{A}_i^n(t) \,\df\, 
n^{-\nicefrac{1}{2}}\bigl(A^n_i(t) - \lambda^n_it\bigr)\,, \qquad
\Hat{S}_{i}^{n}(t) \,\df\, n^{-\nicefrac{1}{2}}\biggl({S}_i^n(t) 
- \mu^n_i\int_0^{t}Z^n_i(s)\Psi^n(s)\,\D{s}\biggr)\,, \\
\Hat{R}^{n}_i(t) \,\df\, n^{-\nicefrac{1}{2}}\biggl({R}_i^n(t) 
- \gamma^n_i\int_0^{t}Q^n_i(s)\,\D{s}\biggr)\,, \quad \text{and\ \ }
\Hat{L}^n_i(t) \,\df\, \sqrt{n}\mu^n_i\rho_i C^n_{\mathsf{d}}(t)\,.
\end{gathered}
\end{equation*}
Let $\Hat{W}^n$ and $\Hat{Y}^n$, $n\in\NN$, be $d$-dimensional processes defined by
\begin{equation}\label{ES3.1A}
\Hat{W}_i^n \,\df\, \Hat{A}^n_i - \Hat{S}^n_{i} - \Hat{R}^n_{i}
\qquad \text{for }i\in\sI \,,
\end{equation}
and 
\begin{equation*}
\Hat{Y}^n_i(t) \,\df\, \ell^n_it - \mu^n_i\int_0^{t}\Hat{Z}^n_i(s)\Psi^n(s)\,\D{s} 
- \gamma^n_i\int_0^{t}\Hat{Q}^n_i(s)\,\D{s} \qquad \text{for }i\in\sI\,,\ t\ge0\,,
\end{equation*}
respectively. Then, $\Hat{X}^n_i$ in \cref{E-HatX} has
the representation
\begin{equation*}
\Hat{X}^n_i(t) \,=\, \Hat{X}_i^n(0) + \Hat{Y}^n_i(t) + \Hat{W}^n_i(t) + \Hat{L}^n_i(t)\,.
\end{equation*}
The initial condition $\Hat{X}^n(0)$, $n\in\NN$, is assumed to be deterministic
throughout the paper.

\subsection{The limiting controlled diffusion with compound Poisson jumps}
In \cref{L3.1,T3.1} which follow,
products or powers
of the spaces $(\DD^d,J_1)$ and $(\DD^d,M_1)$ are viewed as metric
spaces endowed with the maximum metric.
The proofs of these results are given in \cref{AppA}.

\begin{lemma}\label{L3.1}
Suppose that \cref{A2.1,A2.2} hold, and that $\{\Hat{X}^n(0)\colon n\in\NN\}$
is  bounded.
Then, under any sequence of 
${U}^n \in {\Uadm}^n$,
we have
\begin{equation*}
(n^{-1}{Q}^n, n^{-1}{Z}^n) \;\Rightarrow\; (\mathfrak{e}_0,\mathfrak{e}_{\rho}) 
\quad \text{in} \quad (\DD^d, M_1)^2\,,
\end{equation*}
where $\mathfrak{e}_0(t) \equiv (0,\dotsc,0)'$ for all $t\ge0$,
and $\mathfrak{e}_\rho(t) \equiv  (\rho_1,\dotsc,\rho_d)'$.
\end{lemma}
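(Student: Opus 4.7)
The plan is a standard fluid-scale law-of-large-numbers argument.

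Since departures are nonnegative, $X^n_i(t)\le X^n_i(0)+A^n_i(t)$, and by the FLLN for renewal processes together with $\Hat X^n(0)$ being bounded, the scaled processes $n^{-1}X^n$ (and hence $n^{-1}Z^n$ and $n^{-1}Q^n$) are uniformly bounded on compact intervals in probability. Using \cref{ES2.1E} and the FLLN for the unit-rate Poisson processes $S^n_{*,i},R^n_{*,i}$, whose time-change arguments are $\order(n)$ on compacts by the preceding bound, one obtains, uniformly on compacts in probability,
\begin{equation*}
n^{-1}S^n_i(t) \,=\, \mu^n_i\int_0^t n^{-1}Z^n_i(s)\Psi^n(s)\,\D{s} + o(1)\,,\qquad
n^{-1}R^n_i(t) \,=\, \gamma^n_i\int_0^t n^{-1}Q^n_i(s)\,\D{s} + o(1)\,.
\end{equation*}
From \cref{ES2.1C} and \cref{A2.2}, $C^n_{\mathsf{d}}(t)=\order(n^{-\nicefrac12})$ uniformly on compacts in probability, so $\int_0^t\Psi^n(s)\,\D{s}=t+o(1)$. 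Substituting into \cref{E-dynamic} and using \cref{A2.1} yields the approximate fluid identity
\begin{equation*}
n^{-1}X^n_i(t) \,=\, \rho_i + \lambda_i t - \mu_i\int_0^t n^{-1}Z^n_i(s)\,\D{s} - \gamma_i\int_0^t n^{-1}Q^n_i(s)\,\D{s} + \epsilon^n_i(t)\,,
\end{equation*}
with $\epsilon^n_i\to 0$ uniformly on compacts in probability.

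Next I would extract a fluid limit. The uniform boundedness, together with the fact that increments of $n^{-1}X^n$ over any subinterval of length $\delta$ are controlled by $C\delta+o(1)$, gives tightness in $(\DD^d,J_1)$; since the jumps of $n^{-1}X^n$ are of size $n^{-1}$, any subsequential limit is a.s.\ continuous. Passing to the limit along a convergent subsequence, $(n^{-1}X^n,n^{-1}Z^n,n^{-1}Q^n)\Rightarrow(\bar X,\bar Z,\bar Q)$, where the limit satisfies
\begin{equation*}
\bar X_i(t) \,=\, \rho_i + \lambda_i t - \mu_i\int_0^t \bar Z_i(s)\,\D{s} - \gamma_i\int_0^t \bar Q_i(s)\,\D{s}\,,
\end{equation*}
together with $\bar X_i=\bar Z_i+\bar Q_i$, $\bar Z_i,\bar Q_i\ge 0$, and (inherited from $\cZ^n$ and $\sum_i\rho_i=1$) the work-conservation identity $\sum_i\bar Z_i=(\sum_i\bar X_i)\wedge 1$. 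Convergence to the constant paths $\mathfrak{e}_0,\mathfrak{e}_\rho$ in $(\DD^d,M_1)^2$ is then equivalent to uniform-on-compacts convergence in probability, so it suffices to show that this fluid model has the unique solution $\bar X\equiv\mathfrak{e}_\rho$, $\bar Z\equiv\mathfrak{e}_\rho$, $\bar Q\equiv\mathfrak{e}_0$.

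This uniqueness step, complicated by the non-smooth work-conservation boundary, is the main obstacle. Setting $y_i\df \bar X_i-\rho_i$ and using $\lambda_i=\mu_i\rho_i$ and $\bar Z_i=\bar X_i-\bar Q_i$, the integral equation reduces to
\begin{equation*}
y_i(t) \,=\, -\mu_i\int_0^t y_i(s)\,\D{s} + (\mu_i - \gamma_i)\int_0^t \bar Q_i(s)\,\D{s}\,,\qquad y_i(0)=0\,.
\end{equation*}
Work conservation combined with $\bar Z_i\le\bar X_i$ and $\sum_i\rho_i=1$ forces $\sum_i\bar Q_i=(\sum_i y_i)^+$, hence $0\le\bar Q_i\le\norm{y}$. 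Consequently $\sum_i\abs{y'_i(t)}\le \bigl(\mu_{\max}+d(\mu_{\max}+\gamma_{\max})\bigr)\norm{y(t)}$, and Gronwall's inequality applied to the right-Dini derivative of $\norm{y(\cdot)}$ together with $\norm{y(0)}=0$ yields $\norm{y}\equiv 0$. Thus $\bar X\equiv\mathfrak{e}_\rho$, and then $\sum_i\bar Z_i=1=\sum_i\rho_i$ together with $\bar Z_i\le\rho_i$ forces $\bar Z\equiv\mathfrak{e}_\rho$ and $\bar Q\equiv\mathfrak{e}_0$.
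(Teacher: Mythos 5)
Your approach is a genuine alternative to the paper's argument. The paper never passes to a fluid model: it establishes stochastic boundedness of the \emph{diffusion}-scaled process $\Hat{X}^n$ in $(\DD^d,M_1)$ using martingale bounds for $\Hat{S}^n,\Hat{R}^n$ (via \cite{PW09}, \cite{PTW07}) together with Gronwall's inequality, and then simply notes that $n^{-\nicefrac{1}{2}}\Hat{X}^n = n^{-1}X^n - \rho$ must therefore converge to $\mathfrak{e}_0$, followed by the work-conservation identity $\langle e, n^{-1}Q^n\rangle = (\langle e, n^{-1}X^n\rangle - 1)^+$ and nonnegativity to pin down $Q^n$ and $Z^n$. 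Your route — a fluid FLLN, then uniqueness of the fluid model via a Gronwall argument on $\norm{\bar X - \rho}$ — is more elementary in that it avoids the martingale/stochastic-boundedness machinery, at the cost of having to dispose of the non-smooth work-conservation boundary, which you handle correctly.

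There is, however, a gap in the tightness step. You establish tightness of $n^{-1}X^n$, but then write ``passing to the limit along a convergent subsequence, $(n^{-1}X^n, n^{-1}Z^n, n^{-1}Q^n)\Rightarrow(\bar X,\bar Z,\bar Q)$'' as if the whole triple were tight. Under an arbitrary admissible (preemptive) policy $U^n$, the processes $n^{-1}Z^n$ and $n^{-1}Q^n$ can oscillate wildly — $Z^n$ can swap nearly all $n$ servers between classes in an instant — so they need not be tight in $(\DD^d,J_1)$ or even $(\DD^d,M_1)$. The standard repair is to pass instead to the time-integrals $\int_0^\cdot n^{-1}Z^n_i\,\D s$ and $\int_0^\cdot n^{-1}Q^n_i\,\D s$, which are Lipschitz (uniformly in $n$) and hence tight in the local uniform topology; the limits are Lipschitz and one sets $\bar Z$, $\bar Q$ to be their a.e.\ derivatives. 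The fluid integral equation then holds, and the work-conservation identity at the fluid level follows from $\sum_i\int_0^t n^{-1}Q^n_i\,\D s = \int_0^t\bigl(\sum_i n^{-1}X^n_i - 1\bigr)^+\,\D s$ and continuous mapping. Your Gronwall argument for $\bar X\equiv\mathfrak{e}_\rho$ then goes through unchanged.

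Relatedly, the last sentence — deducing $\bar Z\equiv\mathfrak{e}_\rho$, $\bar Q\equiv\mathfrak{e}_0$ from the a.e.\ fluid identities — does not by itself yield the claimed $M_1$ convergence of the \emph{processes} $n^{-1}Z^n$ and $n^{-1}Q^n$, precisely because $\bar Z,\bar Q$ are only derivatives of limit integrals. The clean finish is the one the paper uses: once $n^{-1}X^n\Rightarrow\mathfrak{e}_\rho$ (equivalently, locally uniformly in probability), work conservation gives $\langle e, n^{-1}Q^n\rangle = (\langle e, n^{-1}X^n\rangle - 1)^+\Rightarrow 0$, and nonnegativity forces each $n^{-1}Q^n_i\Rightarrow 0$; then $n^{-1}Z^n = n^{-1}X^n - n^{-1}Q^n\Rightarrow\mathfrak{e}_\rho$. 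With these two adjustments your proof is correct.
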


\begin{theorem} \label{T3.1}
Grant the assumptions in \cref{L3.1}. Then, the following hold.
\begin{itemize}
\item[\textup{(i)}] As $n\rightarrow\infty$,
\begin{equation*}
(\Hat{W}^n,\Hat{L}^n) \;\Rightarrow\; (\Sigma W,\lambda L) 
\quad \text{in}\quad (\DD^d,J_1) \times (\DD^d,M_1)\,,
\end{equation*}
where the matrix $\Sigma$ is given by $\Sigma \df 
\diag\Bigl(\sqrt{\lambda_1 (1+ c^2_{a,1})},\dotsc,
\sqrt{\lambda_d (1+ c^2_{a,d})}\Bigr)$,
${W}$ is a $d$-dimensional standard Wiener process,
$\lambda \df (\lambda_1,\dotsc,\lambda_d)'$,
and $\{{L}_t\}_{t\ge0}$ is the one-dimensional L\'evy process in \eqref{ES2.1C}, 
and is independent of ${W}$. 

\item[\textup{(ii)}]
The sequence $(\Hat{X}^n,\Hat{Y}^n,\Hat{W}^n,\Hat{L}^n)$ is tight in 
$(\DD^d,M_1)\times(\DD^d,J_1)^2\times(\DD^d,M_1)$.

\item[\textup{(iii)}]
Provided ${U}^n$ is tight in $(\DD^d,J_1)$,
any limit $X$ of $\Hat{X}^n$ is a strong solution to the stochastic differential equation
\begin{equation}\label{ET3.1A}
\D{X_t} \,=\, 
b(X_t,U_t)\,\D{t} + \Sigma\,\D{W_t} + \lambda\,\D{L_t}\,,
\end{equation}
with initial condition ${X}_0=x \in \Rd$, 
where $U$ is a limit of ${U}^n$,
and $b(x,u)\colon \Rd\times\cS\to\Rd$ takes the form
\begin{equation}\label{ET3.1B}
b(x,u) \,=\, \ell - M(x-\langle e,x \rangle^+u)-\langle e,x \rangle^+\varGamma u\,,
\end{equation}
with $\ell \df (\ell_1,\dotsc,\ell_d)'$, 
$M \df \diag(\mu_1,\dotsc,\mu_d)$, and $\varGamma\df \diag(\gamma_1,\dotsc,\gamma_d)$.
Moreover, any such limit $U$ is non-anticipative, that is, for $s<t$,  
$(W_t - W_s, L_t - L_s)$ is independent of
\begin{equation*}
\cF_s \,\df\, \text{the completion of } \sigma\{X_0, U_r, W_r,L_r\colon r\le s\}\,. 
\end{equation*}
\end{itemize}
\end{theorem}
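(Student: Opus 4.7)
\textbf{Proof proposal for \cref{T3.1}.}

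For part (i), I would prove convergence of the three components of $\Hat{W}^n$ separately and then combine using joint independence. The FCLT for renewal processes applied to \cref{E-arrival} gives $\Hat{A}^n \Rightarrow B^a$ in $(\DD^d, J_1)$, where $B^a$ is a Brownian motion with covariance $\diag(\lambda_i c_{a,i}^2)$. For $\Hat{S}^n$, the standard random-time-change-of-Poisson representation together with $n^{-1}Z^n \Rightarrow \mathfrak{e}_\rho$ (from \cref{L3.1}) and a standard FCLT for the compensated Poisson process yields $\Hat{S}^n \Rightarrow B^s$, where $B^s$ is a Brownian motion with covariance $\diag(\mu_i\rho_i) = \diag(\lambda_i)$, independent of $B^a$. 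For $\Hat{R}^n$, since $n^{-1} Q^n \Rightarrow \mathfrak{e}_0$, the associated Poisson intensity $\gamma^n_i\int_0^t Q^n_i(s)\,\D{s}$ is $\sorder(n)$, and rescaling by $n^{-\nicefrac{1}{2}}$ forces $\Hat{R}^n \Rightarrow 0$. Summing gives $\Hat{W}^n \Rightarrow \Sigma W$ with the claimed $\Sigma\Sigma\transp = \diag(\lambda_i(1+c_{a,i}^2))$. Convergence of $\Hat{L}^n$ to $\lambda L$ is immediate from \cref{ES2.1C} combined with $\mu^n_i \to \mu_i$. The joint convergence follows because the arrival, service, and abandonment primitives are independent of the alternating renewal environment by assumption. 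The natural topology on the product must combine $J_1$ for the Brownian-type pieces and $M_1$ for $\Hat{L}^n$, since the downtimes occur in clusters whose widths shrink.

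For part (ii), the tightness of $(\Hat{W}^n,\Hat{L}^n)$ is given by (i). For $\Hat{X}^n$, I use the identity $\Hat{X}^n = \Hat{X}^n(0) + \Hat{Y}^n + \Hat{W}^n + \Hat{L}^n$, the bounds $|\Hat{Z}^n_i|,|\Hat{Q}^n_i|\le \norm{\Hat{X}^n}$ derivable from \cref{ES2.1D}, and the Lipschitz character of the drift in $\Hat{Y}^n$. A Gronwall-type estimate applied to $t\mapsto \sup_{s\le t}\norm{\Hat{X}^n(s)}$ gives a modulus-of-continuity bound of $\Hat{X}^n$ controlled by the oscillations of $\Hat{W}^n$, $\Hat{L}^n$ plus a linear-in-$t$ drift, from which tightness in $(\DD^d,M_1)$ follows (the $M_1$ choice is forced by the jumps in $\Hat{L}^n$). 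Tightness of $\Hat{Y}^n = \Hat{X}^n - \Hat{X}^n(0) - \Hat{W}^n - \Hat{L}^n$ in $(\DD^d,J_1)$ is then automatic from its absolute continuity (bounded derivative on compacts, given the $\Hat{X}^n$ bound).

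For part (iii), along a convergent subsequence let $(\Hat{X}^n,\Hat{Y}^n,\Hat{W}^n,\Hat{L}^n,U^n)\Rightarrow(X,Y,\Sigma W,\lambda L,U)$. Using the invertibility of the action map, rewrite
\begin{equation*}
\Hat{Z}^n(t) \,=\, \Hat{X}^n(t) - \langle e,\Hat{X}^n(t)\rangle^{+} U^n(t) + \sorder(1)\,,\qquad
\Hat{Q}^n(t) \,=\, \langle e,\Hat{X}^n(t)\rangle^{+} U^n(t) + \sorder(1)\,,
\end{equation*}
where the $\sorder(1)$ terms absorb the rounding coming from $\langle e,X^n\rangle\wedge n$. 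Substituting into $\Hat{Y}^n$ and replacing the $\Psi^n$ factor by $1$ at a cost of $\int_0^t|\Hat{Z}^n_i(s)|(1-\Psi^n(s))\,\D{s}\le \norm{\Hat{Z}^n}_{\infty,[0,t]}\,C^n_{\mathsf d}(t) = \sorder(1)$, together with the Skorokhod representation theorem applied almost surely and dominated convergence on the Lebesgue integrals, identifies $Y_t = \int_0^t b(X_s,U_s)\,\D{s}$ with $b$ as in \cref{ET3.1B}. Non-anticipativity of $U$ follows from the non-anticipativity of $U^n$ together with the independence of the future increments of $(\Hat{W}^n,\Hat{L}^n)$ from $\cF^n_s$, passed to the limit; strong solvability then follows from the Lipschitz continuity of $b$ in $x$ (uniform in $u$) using standard existence-and-uniqueness theory for SDEs driven by a Brownian motion and a compound Poisson process.

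The main obstacle will be the topology mismatch between the continuous Brownian limit and the jump limit $L$, together with the compatibility of the $\Psi^n$-weighted Lebesgue integral with $M_1$-convergence of $\Hat{Z}^n$. A careless limit passage would collapse the identification of the drift because $M_1$ does not preserve addition in general. I would handle this by splitting $\int_0^t\Hat{Z}^n_i(s)\Psi^n(s)\,\D{s} = \int_0^t\Hat{Z}^n_i(s)\,\D{s} - \int_0^t\Hat{Z}^n_i(s)(1-\Psi^n(s))\,\D{s}$ and estimating the second integral by $\norm{\Hat{Z}^n}_{\infty,[0,t]}\,C^n_{\mathsf d}(t)\to 0$ almost surely along the Skorokhod realization, so that only Lebesgue integrals of $M_1$-converging processes against a fixed measure need be passed to the limit, which is legitimate.
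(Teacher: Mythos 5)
Your arguments for parts (i) and (ii), and your drift identification in (iii), follow essentially the paper's approach: the same FCLT and random-time-change decomposition of $\Hat{W}^n$, the same Gronwall-type estimate from \cref{PL3.1A}, and the same mechanism of eliminating the $\Psi^n$-weight in $\Hat{Y}^n$ by bounding the downtime integral via $C^n_{\mathsf{d}}(t)$. The difference is organizational: the paper packages this into \cref{LA.1} by introducing the auxiliary process $\Check{X}^n$ of \cref{ES5.1A} (with the $\Psi^n$ factor stripped out) and then invokes continuity of the integral map $\Lambda$ in $(\DD^d,M_1)$ together with the continuous-mapping theorem, whereas you inline the same estimate and apply the Skorokhod representation theorem with dominated convergence on the Lebesgue integrals. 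Both routes work; the paper's is slightly cleaner in that it avoids having to justify pointwise a.e.\ convergence of the product $\langle e,\Hat{X}^n\rangle^+ U^n$, which is not a continuous operation in $M_1$.

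The one genuine gap is in the non-anticipativity argument. In the prelimit, the non-anticipativity of $U^n$ is stated through the $\sigma$-field $\cG^n_t$ of \cref{E-filtration}, whose increments of $S^n$, $R^n$ and $C^n_{\mathsf{d}}$ are measured starting at $\breve{\tau}^n(t)$, the next return to the `up' state (see \cref{ES2.2A}), not at $t$ itself. This shift is not cosmetic: when the system is in the `down' state at time $t$, the elapsed downtime is $\cF^n_t$-measurable and biases the residual downtime (which is not exponential), so the unshifted increments of $\Hat{L}^n$ need not be independent of $\cF^n_t$. Your claim that "independence of the future increments of $(\Hat{W}^n,\Hat{L}^n)$ from $\cF^n_s$" is passed to the limit therefore skips a step. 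One must first observe that $\breve{\tau}^n(t)\Rightarrow t$ and then, via the decomposition in \cref{PT3.1B} and the random-time-change lemma, show that $\Hat{L}^n\bigl(\breve{\tau}^n(t)+r\bigr) - \Hat{L}^n\bigl(\breve{\tau}^n(t)\bigr)$ converges to $\lambda(L_{t+r}-L_t)$, with the analogous statements for $\Hat{S}^n$ and $\Hat{R}^n$, before the limiting argument of \cite{AMR04}*{Lemma 6} can be invoked.
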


Throughout the paper,
the time variable appears as a subscript in the
processes governing the limiting controlled diffusion 
in order to distinguish them from the processes associated
with the $n^{\mathrm{th}}$ system.

\subsection{The control problems}
Define $\widetilde{\rc}\colon \RR^d_+\to\RR_{+}$ by
\begin{equation}\label{ES3.2A}
\widetilde{\rc}(x)\,\df\, c \abs{x}^m
\end{equation}
for some $c>0$ and $m\ge1$. 
The running cost function $\rc\colon \Rd \times \cS \to\RR_+$ is defined by
\begin{equation*}
\rc(x,u)\,\df\,\widetilde{\rc}\bigl(\langle e,x \rangle^+u\bigr)\,.
\end{equation*}

\begin{remark}
We only choose a running cost function as in \cref{ES3.2A} 
to simplify the exposition.
One may replace \cref{ES3.2A} with a function $\widetilde{\rc}$, 
which is locally Lipschitz continuous, and satisfies
\begin{equation}\label{ER3.1A}
c_1\abs{x}^{m}  \,\le\, \widetilde{\rc}(x)  \,\le\, c_2\abs{x}^{m}
\qquad \forall\, x\in\Rd\,,
\end{equation}
for some positive constants $c_1$, $c_2$, 
and $m \ge 1$. 
All the results still hold with $\cref{ER3.1A}$.
Moreover, the lower bound in \cref{ER3.1A} is not needed for the discounted problem
(see, e.g., \cite{AMR04}).
\end{remark}

The $\alpha$-discounted control problem for the $n^{\mathrm{th}}$ system is given by
\begin{equation*}
\Hat{V}^n_\alpha\bigl(\Hat{X}^n(0)\bigr) \,\df\, 
\inf_{U^n\in\Uadm^n}\, \Hat{J}_\alpha(\Hat{X}^n(0),{U}^n) \qquad \alpha>0\,,
\ n\in\NN\,,
\end{equation*}
where the cost criterion is defined by
\begin{equation*}
\Hat{J}_\alpha(\Hat{X}^n(0),{U}^n) \,\df\, 
\Exp\biggl[\int_0^{\infty} \E^{-\alpha{t}}\,
\rc\bigl(\Hat{X}^n(s),{U}^n(s)\bigr)\,\D{s}\biggr] 
\qquad \forall\, \alpha>0\,.
\end{equation*}

For the controlled (jump) diffusion ${X}$ in \cref{ET3.1A},
we say that
a control $U$ is admissible if it takes values in $\cS$, and non-anticipative
(see \cite{APZ19a}).
We denote the set of all admissible controls by $\Uadm$ .
The corresponding $\alpha$-discounted cost criterion for the diffusion
takes the form 
\begin{equation*}
J_\alpha(x,{U}) \,\df\, 
\Exp_x^{{U}}\biggl[\int_0^{\infty}\E^{-\alpha{t}}\,
\rc({X}_s,{U}_s)\,\D{s}\biggr]
\qquad \forall\, \alpha>0\,,
\end{equation*}
and the optimal $\alpha$-discounted value function is given by
\begin{equation}\label{ES3.2B}
V_\alpha(x) \,\df\, 
\inf_{{U} \in \Uadm}\, {J}_\alpha(x,U) \qquad \forall\, \alpha>0\,, 
\end{equation}
where $\Exp_x^{{U}}$ denotes the expectation operator
corresponding to the process under the control ${U}$,
with initial condition $x\in\Rd$.
We introduce the following assumption for the discounted problem.

\begin{assumption}\label{A3.1} 
There exists a constant $m_{A} \ge m\vee 2$
with $m$ as in \cref{ES3.2A} such that $\Exp[(G_i)^{m_A}] < \infty$,
for all $i\in\sI$,
and $\Exp[(d_1)^{m_A\vee(m+1)}] < \infty$.
\end{assumption}
We state the main result for the discounted problem in the next theorem, 
whose proof is given in \cref{S5.2}.

\begin{theorem}\label{T3.2}
Grant the hypotheses in \cref{A2.1,A2.2,A3.1},
and suppose that $\Hat{X}^n(0)\rightarrow x\in\Rd$ as $n\rightarrow\infty$.
Then
\begin{equation}\label{ET3.2A}
\lim_{n\rightarrow\infty}\,\Hat{V}^n_\alpha\bigl(\Hat{X}^n(0)\bigr)
\,=\, V_{\alpha}(x) \,.
\end{equation}
\end{theorem}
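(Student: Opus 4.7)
The plan is to prove the two inequalities
\begin{equation*}
\liminf_{n\to\infty}\, \Hat{V}^n_\alpha\bigl(\Hat{X}^n(0)\bigr) \,\ge\, V_{\alpha}(x)
\quad\text{and}\quad
\limsup_{n\to\infty}\, \Hat{V}^n_\alpha\bigl(\Hat{X}^n(0)\bigr) \,\le\, V_{\alpha}(x)\,,
\end{equation*}
following the template of \cite{AMR04}. The common technical core is a uniform moment estimate of the form
\begin{equation*}
\sup_{U^n\in\Uadm^n}\,\sup_{t\in[0,T]}\, \Exp\bigl[\babs{\Hat{X}^n(t)}^{m_A}\bigr]
\,\le\, C_T\qquad\forall\, T>0\,,
\end{equation*}
which I would derive from the representation \cref{E-HatX}: the martingale pieces $\Hat{A}^n,\Hat{S}^n,\Hat{R}^n$ are controlled by Burkholder--Davis--Gundy together with $\Exp[G_i^{m_A}]<\infty$ from \cref{A3.1}; the jump term $\Hat{L}^n$ is bounded directly from \cref{ES2.1C} and the hypothesis $\Exp[d_1^{m_A\vee(m+1)}]<\infty$; and the remaining drift is Lipschitz in $\Hat{X}^n$ after substituting $\Hat{Z}^n$ and $\Hat{Q}^n$, so Gronwall closes the estimate. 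Since $m_A\ge m$, this bound supplies uniform integrability of the running cost $\rc(\Hat{X}^n(t),U^n(t))$ on $[0,T]$, and combined with the exponential discount factor $\E^{-\alpha t}$ it extends to uniform integrability over $[0,\infty)$.

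For the lower bound, fix $\delta>0$ and choose, for each $n$, a $\delta$-minimizer $U^n\in\Uadm^n$ of $\Hat{J}_\alpha(\Hat{X}^n(0),\,\cdot\,)$. Since $\cS$ is compact, the laws of $U^n$, viewed as relaxed controls, are automatically tight; combined with \cref{T3.1}\textup{(ii)} this yields tightness of $(\Hat{X}^n,\Hat{Y}^n,\Hat{W}^n,\Hat{L}^n,U^n)$, and along a subsequence $(\Hat{X}^n,U^n)\Rightarrow(X,U)$ with $X_0=x$. By \cref{T3.1}\textup{(iii)} the limit $(X,U)$ solves \cref{ET3.1A}, with $U$ admissible. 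The uniform integrability above then permits passage to the limit in the cost functional (Fatou combined with dominated convergence on finite horizons), giving $\liminf_n \Hat{J}_\alpha(\Hat{X}^n(0),U^n)\ge J_\alpha(x,U)\ge V_\alpha(x)$; since $\delta$ was arbitrary, the lower bound follows.

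For the upper bound, fix $\epsilon>0$ and invoke the results of \cite{APZ19a} to obtain a continuous $\epsilon$-optimal stationary Markov control $v\colon\Rd\to\cS$ for the limiting jump diffusion, so that $J_\alpha(x,v)\le V_\alpha(x)+\epsilon$. Lift $v$ to a non-anticipative scheduling policy for the $n^{\mathrm{th}}$ system by setting $U^n(t)\df v(\Hat{X}^n(t))$ and rounding to a legal allocation in $\cZ^n(X^n(t))$, precisely as in \cite{AMR04}. Continuity of $v$, \cref{L3.1,T3.1}, and strong uniqueness for the SDE \cref{ET3.1A} driven by $v$ give $(\Hat{X}^n,U^n)\Rightarrow(X,v(X))$; uniform integrability then produces $\limsup_n\Hat{V}^n_\alpha(\Hat{X}^n(0))\le J_\alpha(x,v)\le V_\alpha(x)+\epsilon$, and $\epsilon\downarrow 0$ completes the proof. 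I expect the main obstacle to be controlling the jump term $\Hat{L}^n$ in the moment estimate: unlike the martingale components it converges only in the $M_1$ topology and cannot be handled by standard Doob/BDG arguments, but its crucial feature is that it is an \emph{exogenous}, control-free process, so its $m_A$-th moment grows at most linearly in $t$ by \cref{A3.1}, and this is what lets the a priori estimate above be uniform over $\Uadm^n$.
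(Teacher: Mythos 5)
Your outline matches the paper's high-level structure: a uniform moment bound (your claim is essentially \cref{L5.1}, which the paper obtains by following \cite{AMR04}*{Lemma 3} rather than BDG since $\Hat{S}^n,\Hat{R}^n$ are compensated Poisson martingales), then lower and upper bounds separately, as in \cite{AMR04}.

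In the lower bound there is a mismatch you need to resolve. You correctly observe that $U^n$ is tight only in the relaxed-control sense, but then you invoke \cref{T3.1}\textup{(iii)} whose hypothesis is that $U^n$ is tight in $(\DD^d,J_1)$. Arbitrary near-optimal $U^n$ take values in the discrete sets $\cS^n(X^n(t))$ and need not be $J_1$-tight, so the identification of the limit as a solution of \cref{ET3.1A} does not follow directly from \cref{T3.1}\textup{(iii)}. The paper absorbs this into the appeal to \cite{AMR04}*{Theorem 4(i)}, which works with relaxed controls from the outset. Once that is patched, your route via $\liminf_n$ of finite-horizon costs (monotone convergence in $T$, using $\rc\ge 0$) and $J_\alpha(x,U)\ge V_\alpha(x)$ is actually slightly cleaner than the paper's, which detours through an It\^o-formula/HJB verification inequality using the truncated $V^l_\alpha=\chi_l\circ V_\alpha$.

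The upper bound, however, takes a genuinely different and, as written, gapped route. The paper selects the measurable optimal Markov control $v_\alpha$ from the HJB minimizer in \cref{ET5.1A}, maps it to $\Bar{z}^n[v_\alpha]$ via the quantization $\varpi$, and then \emph{does not} argue weak convergence of the controlled pair: instead it shows the HJB residual $\Upsilon^n$ tends to zero (following \cite{AMR04}*{Theorems 2(i) and 4(ii)}) and closes the argument with It\^o's formula for $V_\alpha$ along the prelimit process. This sidesteps both continuity of the control and any uniqueness statement for the controlled SDE. Your approach instead requires a \emph{continuous} $\epsilon$-optimal stationary Markov control so that $v(\Hat{X}^n)\Rightarrow v(X)$. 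But the paper supplies continuous $\epsilon$-optimal controls only for the \emph{ergodic} problem (\cref{T5.2}); there is no analogous statement here for the discounted cost, and producing one would entail a mollification-plus-HJB argument that is not free. Furthermore, ``strong uniqueness for the SDE driven by $v$'' is the wrong notion (uniqueness in law is what identifies the weak limit), and it is not automatic for a merely continuous drift $b(\cdot,v(\cdot))$; the paper's $\Upsilon^n$-based argument avoids needing any such uniqueness.
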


\begin{remark}
Note that in \cref{T3.2}, we do not need to impose any restrictions
on the limiting abandonment rates $\{\gamma_i\colon i\in\sI\}$.
\end{remark}
 
We define the ergodic control problem for the diffusion-scaled process by
\begin{equation*}
\varrho^n\bigl(\Hat{X}^n(0)\bigr) \,\df\, 
\inf_{Z^n\in\fZsm^n}\, \Hat{J}(\Hat{X}^n(0),{Z}^n)\,,
\end{equation*}
where the cost criterion $\Hat{J}$ is given by
\begin{equation*}
\Hat{J}(\Hat{X}^n(0),{Z}^n) \,\df\, 
\limsup_{T\rightarrow\infty}\,
\frac{1}{T}\Exp^{Z^n}\biggl[\int_0^{T}
\widetilde{\rc}\bigl(\Hat{Q}^n(s)\bigr)\,\D{s}\biggr] \,.
\end{equation*}
Here, the infimum is over all Markov scheduling policies,
since for the ergodic control problem, we work with Markov processes.
For the controlled jump diffusion in \cref{ET3.1A},
the ergodic cost criterion, and the optimal ergodic value are defined by 
\begin{equation*}
{J}(x,{U}) \,\df\,  
\limsup_{T\rightarrow\infty}\,
\frac{1}{T}\Exp_x^{{U}}\biggl[\int_0^{T}\rc({X}_s,{U}_s)\,\D{s}\biggr]\,,
\end{equation*}
and 
\begin{equation}\label{ES3.2C}
 \varrho_*(x) \,\df\, \inf_{{U} \in \Uadm}\, {J} (x,{U})\,, 
\end{equation}
respectively.
By Theorem~4.1 in \cite{APZ19a}, 
it follows that $\varrho_*$ is independent of $x$, 
and optimality is attained by a stationary Markov control.

We introduce the following assumption on $G_i$ and $d_1$ for the ergodic control problem.

\begin{assumption}\label{A3.2}
The following hold.
\begin{enumerate}
\item[\textup{(i)}]
The right derivative of $F_i(t)$ is finite,
and $F_i(t) < 1$,
for all $t\ge 0$ and $i\in\sI$.
The distribution function $F^{d_1}$ of $d_1$ satisfies the same property.
\item[\textup{(ii)}]
The mean residual life function of $G_i$ and $d_1$ are 
bounded, that is, 
there exists some positive constant $\widehat{C}$ such that
\begin{equation}\label{EA3.2A}
\frac{\int_{t}^{\infty}\bigl(1 - F^{d_1}(y)\bigr)\,\D{y}}{1 - F^{d_1}(t)}
\,\le\, \widehat{C}\,,\quad\text{and\ \ }
\frac{\int_{t}^{\infty}\bigl(1 - F_i(y)\bigr)\,\D{y}}{1 - F_i(t)} \,\le\, \widehat{C}
\quad \forall\, i\in\sI\,,
\end{equation}
and for all $t\ge0$.
\end{enumerate}
\end{assumption}

\cref{A3.2} implies 
that all absolute moments of $G_i$, $i\in\sI$, and $d_1$ are finite.
The main result of the ergodic control problem is stated in the next theorem,
whose proof is given in \cref{S5.3}.
\begin{theorem}\label{T3.3}
Grant \cref{A2.1,A2.2,A3.2}. 
In addition, suppose that $m$ in \cref{ES3.2A} is larger than $1$,
and that $\Hat{X}^n(0)\rightarrow x\in\Rd$ as $n\rightarrow\infty$.
Then, we have
\begin{equation*}
\lim_{n\rightarrow\infty}\,\varrho^n\bigl(\Hat{X}^n(0)\bigr) \,=\, \varrho_*\,.
\end{equation*}
\end{theorem}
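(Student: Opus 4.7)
The plan is to establish the equality by proving matching bounds
\begin{equation*}
\varrho_* \,\le\, \liminf_{n\to\infty}\,\varrho^n\bigl(\Hat{X}^n(0)\bigr)
\quad\text{and}\quad
\limsup_{n\to\infty}\,\varrho^n\bigl(\Hat{X}^n(0)\bigr) \,\le\, \varrho_*\,,
\end{equation*}
following the general template of \cite{ABP15,AP18,AP19,ADPZ19} but adapted to the non-Markovian renewal/alternating-renewal setting described in the introduction.

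For the lower bound, I would fix an arbitrary sequence $Z^n\in\fZsm^n$ that is nearly optimal in the sense that $\Hat{J}(\Hat{X}^n(0),Z^n) \le \varrho^n(\Hat{X}^n(0))+\tfrac{1}{n}$, and form the mean empirical measures
\begin{equation*}
\zeta^n_T(A\times B) \,\df\, \frac{1}{T}\,\Exp^{Z^n}\!\int_0^T \Ind_A\bigl(\Hat{X}^n(s)\bigr)\Ind_B\bigl(U^n(s)\bigr)\,\D{s}\,,\qquad A\subset\Rd,\ B\subset\cS\,,
\end{equation*}
on $\Rd\times\cS$. Using the long-run average moment bounds of \cref{T4.1} (after verifying that nearly optimal policies keep costs finite, which one can enforce by comparison with the modified priority policy), \cref{L5.2} gives tightness of $\{\zeta^n_{T_n}\}$ along a suitable subsequence $T_n\to\infty$. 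The key step is \cref{T5.3}: any subsequential weak limit $\zeta$ of $\zeta^n_{T_n}$ must be an ergodic occupation measure for the limiting jump diffusion \cref{ET3.1A}. Since standard martingale arguments fail because $\Hat X^n$ is not Markov, I would proceed as in the ergodicity proof under the priority policy: enlarge the state to include the age processes $H^n$ from \cref{ES2.2B} and $(\Psi^n,K^n)$ from \cref{ES2.2C}, apply the generator of this augmented Markov process to the carefully designed test functions in \cref{E-phi} (these incorporate the coefficients of variation of the interarrival and downtime distributions and exploit the bounded mean residual life hypothesis \cref{EA3.2A}), and show the generators converge to the generator of the limiting jump diffusion. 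Integrating the test functions against $\zeta^n_{T_n}$, the boundary terms vanish because the augmented state lives in a tight family (again from \cref{T4.1} and \cref{A3.2}), leaving the infinitesimal-invariance identity that characterizes ergodic occupation measures. Combined with Fatou and the lower semicontinuity of the running cost, this yields $\int\rc\,\D{\zeta}\ge \varrho_*$ from \cref{ES3.2C}, which gives the desired lower bound.

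For the upper bound, I would use the spatial truncation / concatenation construction of \cite{ABP15} as extended in \cite{APZ19a}. Given $\epsilon>0$, \cref{T5.2} provides a continuous precise $\epsilon$-optimal stationary Markov control $v_\epsilon$ for the limiting jump diffusion with $J(x,v_\epsilon)\le \varrho_* + \epsilon$. On a large ball $B_R$, I would discretize $v_\epsilon$ to a scheduling policy $Z^n_\epsilon$ for the $n^{\mathrm{th}}$ system by the standard rounding that preserves non-anticipativeness and work conservation, while on $B_R^c$ I would switch to the (modified) two-step priority policy analyzed in the argument leading to \cref{T4.1}. By \cref{C4.1} this concatenated policy inherits uniform long-run average moment bounds, so the mean empirical measures are again tight, and \cref{L5.3} identifies their limit as the ergodic occupation measure of \cref{ET3.1A} driven by $v_\epsilon$. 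Thus $\limsup_n \Hat{J}(\Hat{X}^n(0),Z^n_\epsilon) \le J(x,v_\epsilon)\le \varrho_*+\epsilon$, and letting $\epsilon\downarrow 0$ concludes the proof.

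I expect the main obstacle to be the lower-bound step, namely \cref{T5.3}: characterizing the weak limits of the mean empirical measures when $\Hat X^n$ is not Markov. The remedy is exactly the one sketched in the introduction — augmenting the state with the residual downtime $K^n$ and the age processes $H^n$, and using the test functions of \cref{E-phi}, whose explicit dependence on the coefficients of variation $c_{a,i}^2$ is what produces the correct diffusion matrix $\Sigma$ of \cref{T3.1}(i) in the limiting generator. The secondary technical issue is that the modified priority policy used for controlling tails in the concatenated construction must be verified to give uniform (in $n$) long-run average moment bounds of sufficient order $m$, for which the Foster–Lyapunov analysis on the augmented Markov model (in the spirit of \cite{Takis-99}, using the bounded mean residual life hypothesis \cref{EA3.2A} in an essential way) is exactly what \cref{T4.1} provides.
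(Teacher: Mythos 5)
Your proposal reproduces the paper's argument essentially verbatim: the lower bound via mean empirical measures of the augmented Markov process $(\widetilde{X}^n,H^n,\Psi^n,K^n)$, with tightness from \cref{L5.2} and characterization of subsequential limits as elements of $\eom$ via \cref{T5.3} (built on the test functions \cref{E-phi} and the generator computations of \cref{L5.3,L5.4}); and the upper bound via the concatenated policy $z^n[v^n]$ of \cref{D4.3}, using \cref{T5.2} for the $\epsilon$-optimal precise control, \cref{C4.1} for the uniform moment bounds, and \cref{L5.5} to identify the limiting occupation measure as $\uppi_{v_\epsilon}$. This is the same decomposition and the same key lemmas as the paper's Section 5.3.
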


\section{Ergodic properties}\label{S4}

\subsection{The limiting controlled diffusion with compound Poisson jumps}
The controlled generator of the controlled limiting jump diffusion 
in \cref{ET3.1A} is given by
\begin{equation}\label{ES4.1A}
\Ag \varphi(x,u) \,=\, \sum_{i\in\sI}b_i(x,u)\partial_i\varphi(x) + 
\frac{1}{2}\sum_{i\in\sI}\lambda_i (1+ c^2_{a,i})\partial_{ii}\varphi(x) + 
\int_{\Rd} \bigl(\varphi(x + y) - \varphi(x)\bigr)\nu_{{L}}(\D{y})
\end{equation}
for $\varphi\in \Cc^2(\Rd)$, 
where the drift $b$ satisfies \cref{ET3.1B}, and
$\nu_L(A) \df \Pi_L\bigl(\bigl\{z\in\RR_* \colon \lambda z\in A\bigr\}\bigr)$
for any Borel measurable set $A$, with $\Pi_L$ as in \cref{ES2.1C}.
We refer the reader to \cite[Section~6]{MT-III} 
for the definition of exponential ergodicity.
The following proposition is a direct consequence of \cite[Theorem~3.5]{APS19}.

\begin{proposition}\label{P4.1}
Under any constant control $v$ such that $\varGamma v \neq 0$,
the controlled limiting jump diffusion in \cref{ET3.1A} is exponentially ergodic.
\end{proposition}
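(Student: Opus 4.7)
The plan is to verify the hypotheses of \cite[Theorem~3.5]{APS19}, a Foster--Lyapunov-type criterion for exponential ergodicity of jump diffusions. Under a constant control $v$ with $\varGamma v \neq 0$, the coefficients of \cref{ET3.1A} are time-homogeneous: the drift $b(\cdot,v)$ is globally Lipschitz and piecewise linear, $\Sigma$ is constant and uniformly nondegenerate, and the jump term is a compound Poisson process of finite intensity $\beta$ whose jump measure $\nu_L$ has moments of all orders by the integrability hypotheses on $d_1$. The nondegeneracy and jump-regularity conditions in the cited theorem are thus automatic, and the substantive task is to exhibit a Lyapunov function $V$ satisfying a geometric drift inequality for $\Ag(\cdot,v)$.

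I would take the weighted polynomial $V(x) = (1 + x\transp Q x)^{p/2}$ with $p \ge 2$ and a symmetric positive-definite $Q$ to be selected. The diffusion contribution to $\Ag V(x,v)$ is $\order(|x|^{p-2})$, and a Taylor expansion shows that the jump integral $\int (V(x+y) - V(x))\,\nu_L(\D{y})$ is $\order(|x|^{p-1})$ in view of the moment bounds on $\nu_L$; both are dominated by the drift term $\langle \grad V(x), b(x,v)\rangle$, whose leading order is $p(1 + x\transp Q x)^{(p-2)/2}\,\langle Qx, b(x,v)\rangle$. On the half-space $\{\langle e,x\rangle \le 0\}$ we have $b(x,v) = \ell - Mx$, so the estimate $\langle Qx, b(x,v)\rangle \le -c_0|x|^2 + C$ holds as soon as $QM + MQ$ is positive definite. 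On $\{\langle e,x\rangle > 0\}$ the drift reads $b(x,v) = \ell + A_v x$ with $A_v \df -M + ((M - \varGamma)v)\,e\transp$, and the analogous estimate reduces, via the standard Lyapunov matrix equation, to demanding that $A_v$ be a Hurwitz matrix.

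The main obstacle is the algebraic verification that $A_v$ is Hurwitz precisely when $\varGamma v \neq 0$. Since $A_v$ is the rank-one perturbation $-M + w\,e\transp$ with $w \df (M - \varGamma)v$, the matrix determinant lemma yields
\begin{equation*}
\det(\lambda I - A_v) \,=\, \prod_{i\in\sI}(\lambda + \mu_i)\biggl(1 - \sum_{i\in\sI}\frac{(\mu_i - \gamma_i) v_i}{\lambda + \mu_i}\biggr)\,,
\end{equation*}
and evaluating at $\lambda = 0$ gives $\det(-A_v) = \bigl(\prod_i \mu_i\bigr)\sum_i \gamma_i v_i/\mu_i$, which is strictly positive precisely when $\varGamma v \neq 0$. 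A spectral continuity argument along the deformation $-M + t((M-\varGamma)v)\,e\transp$, $t\in[0,1]$, together with the above determinant formula evaluated along the imaginary axis, then rules out any axis crossing as $t$ increases from $0$ to $1$ and yields the Hurwitz property of $A_v$. Combined with the ellipticity and jump-moment bounds noted above, this delivers the geometric drift inequality $\Ag V(x,v) \le -\theta V(x) + C\Ind_K(x)$ for some $\theta>0$ and compact $K$, and exponential ergodicity then follows directly from \cite[Theorem~3.5]{APS19}.
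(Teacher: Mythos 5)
Your reduction to a Foster--Lyapunov drift inequality and the verification that $A_v = -M + ((M-\Gamma)v)e\transp$ is Hurwitz exactly when $\Gamma v \ne 0$ is the right outline, and the determinant/imaginary-axis argument is correct: combining $\sum_i v_i = 1$, $\gamma_i, v_i \ge 0$, and $\Gamma v \ne 0$ with the real part of the factorization evaluated at $\lambda = i\omega$ does rule out an eigenvalue crossing along the homotopy on $[0,1]$. Note, however, that the paper itself proves \cref{P4.1} by a one-line citation to \cite[Theorem~3.5]{APS19}, so what you have written is a reconstruction of the proof of that cited result rather than a comparison with any argument appearing in the present paper.

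There is a genuine gap in the Lyapunov step. By fixing $V(x) = (1 + x\transp Q x)^{p/2}$ you commit to a single $Q$ on all of $\Rd$ and then need simultaneously $QM + MQ > 0$ on $\{\langle e,x\rangle \le 0\}$ and $QA_v + A_v\transp Q < 0$ on $\{\langle e,x\rangle > 0\}$. You present the second as ``reducing, via the standard Lyapunov matrix equation, to $A_v$ being Hurwitz,'' but the $Q$ solving $QA_v + A_v\transp Q = -I$ has no reason to satisfy $QM + MQ > 0$, nor does $Q = I$ always satisfy the second condition: with $d=2$, $\mu = (1,10)$, $\gamma = (0,1)$, $v = (0.99,0.01)$, one has $\Gamma v \ne 0$ yet $A_v + A_v\transp$ is indefinite (its determinant is $\approx -0.77$). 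The existence of a common quadratic Lyapunov function for the pair $\{-M, A_v\}$ arising from piecewise Ornstein--Uhlenbeck dynamics is a nontrivial result in its own right (due to Dieker--Gao and others), and it must either be cited or proved before your drift inequality closes. An alternative that avoids the common-$Q$ issue entirely is an $\ell^\kappa$-type Lyapunov function $\sum_{i}\xi_i\lvert x_i\rvert^\kappa$ with a carefully tuned weight vector $\xi$, which permits a componentwise drift estimate; this is precisely the structure the paper employs in the prelimit analysis in \cref{L4.1}. A secondary point: \cref{A2.2} alone gives only $\Exp[d_1] < \infty$, so ``moments of all orders'' for $\nu_L$ is not available at the level of generality of \cref{P4.1}; one should take the Lyapunov exponent small (say $p=1$, or $\kappa=1$), which is still enough for exponential ergodicity.
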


\begin{remark}
It is shown in \cite{AHPS19}*{Theorem 5} that the limiting controlled jump diffusion
is exponentially ergodic uniformly over all stationary Markov controls
resulting in a locally Lipschitz continuous drift, if $\varGamma>0$.  
\end{remark}

\cref{P4.1} implies that the optimal control problems 
for the limiting jump diffusion are well-posed.

\subsection{Preliminaries}
We denote the scaled hazard rate function of $G_i$ by $r^n_i$.
This is defined by
\begin{equation*}
r^n_i(h_i) 
\,\df\, \frac{\lambda_i^n \dot{F}_i(\lambda^n_i h_i)}{1 - F_i(\lambda^n_ih_i)}\,,
\quad \forall\,h_i\in\RR_+\,, \quad \forall\, i\in\sI\,,
\end{equation*}
where $\dot{F}_i$ denotes the right derivative of $F_i$.
Recall $H^n$ in \cref{ES2.2B}.
The extended generator of $(A^n,H^n)$ associated with the renewal arrival processes, 
denoted by $\cH^n$, is given by
\begin{equation}\label{E-sH}
\cH^n f(x,h) \,=\,  \sum_{i\in\sI}\frac{\partial f(x,h)}{\partial h_i} +
\sum_{i\in\sI}r_i^n(h_i) \bigl(f(x+ e_i,h - h_ie_i) - f(x,h)\bigr)
\end{equation}
for $f\in\Cc_b(\Rd\times\RR^d_+)$.

\begin{remark}\label{R4.1}
We sketch the derivation of \cref{E-sH};
see also \cite{Davis-84}*{Theorem~5.5}.
It is enough to consider one component $(A^n_i,H^n_i)$, $i\in\sI$.
We obtain
\begin{equation*}
\begin{aligned}
&\Exp_{x,h}\bigl[f\bigl(A^n_i(t+s),H^n_i(t+s)\bigr)\bigr] -f(x,h) \\
&\quad\,=\, \Exp_{x,h}\bigl[f\bigl(A^n_i(t+s),H^n_i(t+s)\bigr)\bigr] 
- \Exp_{x,h}\bigl[f\bigl(A^n_i(t+s),h\bigr)\bigr] 
+ \Exp_{x,h}\bigl[f\bigl(A^n_i(t+s),h\bigr)\bigr] -f(x,h) \\
&\quad\,=\, r^n_{i,0,s}(h)\bigl(f(x,h+s) - f(x,h)\bigr)
+  r^n_{i,1,s}(h)\bigl(f(x+1,h) - f(x,h)\bigr)
\\
&\qquad+ \sum_{j\in\NN}r^n_{i,j,s}(h)
\Exp_{x,h}\bigl[f\bigl(x+j,H^n_i(t+s)\bigr) - f(x+j,h) \bigm| A^n_i(t+s) =x+j\bigr] \\
&\qquad+ \sum_{j\in\NN,j\ge 2}r^n_{i,j,s}(h)\bigl(f(x+j,h) - f(x,h)\bigr)
\quad \forall\,f\in\Cc_b(\RR\times\RR)\,, \ \forall\, (x,h)\in \RR\times\RR_+\,,
\end{aligned}
\end{equation*}
where
\begin{equation*}
r^n_{i,j,s}(h) \,\df\, \Prob\bigl(A^n_i(t+s) = x + j\,|\, A^n_i(t) = x, H^n_i(t)
\,=\, h\bigr)
\,=\, \Prob\bigl(A^n_i(s+h)=j\,|\,G_i \ge \lambda^n_i h\bigr)
\end{equation*}
by the regenerative property of renewal process.
Since $\dot{F}_i(t)$ is finite for all $t\ge0$, 
it follows that 
\begin{equation*}
r^n_i(h) \,\equiv\, \lim_{s\searrow0}\,\frac{1}{s}\,r^n_{i,1,s}(h)  
\,=\, \frac{\lambda_i^n \dot{F}_i(\lambda^n_i h_i)}{1 - F_i(\lambda^n_ih_i)}\,, 
\quad \text{and} \quad  
\lim_{s\searrow0}\,\frac{1}{s}\,r^n_{i,j,s}(h) = 0 \quad \text{for } j\ge 2\,.
\end{equation*}
It is evident that $\lim_{s\searrow0}r^n_{i,0,s} = 1$ 
and $\lim_{s\searrow0}r^n_{i,j,s} = 0$ for $j\in\NN$.
Thus, we obtain \cref{E-sH}.
\end{remark}

We define (compare this with \cite{Takis-99})
\begin{equation}\label{E-eta}
\eta^n_i(h_i) \,\df\, 
1 - \frac{\int_{\lambda^n_ih_i}^{\infty}\bigl(1 - F_i(y)\bigr)\,\D{y}}
{1 - F_i(\lambda^n_i h_i)}\,, \quad h_i\in\RR_+\,,\ i\in\sI\,.
\end{equation}
Note that $\eta^n_i$ is bounded by \cref{EA3.2A}.
The following identity is frequently used throughout the paper.
\begin{equation}\label{ES4.2A}
\dot\eta^n_i(h_i) - \eta^n_i(h_i)r^n_i(h_i)
\,=\, \lambda^n_i -r^n_i(h_i)\,, 
\quad \forall\,h_i\in\RR_+\,, \quad \forall\,i\in\sI\,.
\end{equation}
Recall that $c^2_{a,i}$ denotes the squared coefficient of variation of $G_i$. 
Let 
\begin{equation}\label{E-kappa}
\kappa^n_i(h_i) \,\df\,  \frac{\int_{\lambda^n_i h_i}^{\infty}
\int_{t}^{\infty}\bigl(1 - F_i(x)\bigr)\,\D{x}\,\D{t}}{1- F_i(\lambda^n_i h_i)}
-\frac{c^2_{a,i} + 1}{2}\frac{\int_{\lambda^n_i h_i}^{\infty}
\bigl(1- F_i(x)\bigr)\,\D{x}}{1 - F_i(\lambda^n_i h_i)}
\end{equation}
for $h_i\in\RR_+$ and $i\in\sI$.
Note that the first term on the right-hand side  of \cref{E-kappa} is the second order
residual life function.
It follows by \cref{EA3.2A} that $\kappa^n_i$ is bounded. 
Using \cref{E-kappa}, we obtain $\kappa^n_i(0) = 0$, and
\begin{equation}\label{ES4.2B}
\dot{\kappa}^n_i(h_i) - r^n_i(h_i)\kappa^n_i(h_i) \,=\, 
\biggl(\eta^n_i(h_i) + \frac{c^2_{a,i}-1}{2}\biggr)\lambda^n_i\,, 
\quad h_i\in\RR_+\,, \ i\in\sI\,.
\end{equation} 

The scaled hazard rate function of $d_1$ is defined by
\begin{equation*}
\beta^n_{\mathsf{d}}(k) \,\df\,
\frac{\vartheta^n\dot{F}^{d_1}(\vartheta^n k)}
{1 - {F}^{d_1}(\vartheta^n k)}\,,
\quad k\in\RR_+\,.
\end{equation*}
Recall $K^n$ in \cref{ES2.2C}.
The extended generator of $(\Psi^n,K^n)$ associated with the alternating renewal process, 
denoted by $\sK^n$, is given by
\begin{equation}\label{E-sK}
\sK^n f(\psi,k) \,=\,  \psi\, 
\beta^n_{\mathsf{u}}\bigl(f(0,0) - f(1,0)\bigr)
+ (1-\psi)\biggl( \beta^n_{\mathsf{d}}(k)\bigl(f(1,0) - f(0,k)\bigr) + 
\frac{\partial f(0,k)}{\partial k} \biggr)
\end{equation}
for $f\in\Cc_b(\{0,1\}\times\RR_+)$,
with $\beta^n_{\mathsf{u}}$ as in \cref{A2.2}.
In analogy to \cref{ES4.2A}, 
we define
\begin{equation}\label{E-upalpha}
\upalpha^n(k) \,\df\, 1 - 
\frac{\int_{\vartheta^nk}^{\infty}\bigl(1 - F^{d_1}(x)\bigr)\,\D{x}}
{1 - F^{d_1}(\vartheta^n k)} \qquad \forall\,k\in\RR_+\,.
\end{equation}
The following identities hold: $\upalpha^n(0) = 0$, and
\begin{equation}\label{ES4.2C}
 \dot{\upalpha}^n(k) - \beta^n_{\mathsf{d}}(k)\upalpha^n(k) \,=\,
\vartheta^n - \beta^n_{\mathsf{d}}(k) \qquad \forall\,k\in\RR_+\,. 
\end{equation}
Let $\tilde{\upalpha}^n(\psi,k) \,\df\, 
\bigl(\psi + \upalpha^n(k)\bigr)(\vartheta^n)^{-1}$. 
It follows by \cref{ES4.2C} that 
\begin{equation}\label{ES4.2D}
\sK^n \tilde{\upalpha}^n(\psi,k) \,=\, 
- \frac{\beta^n_{\mathsf{u}}}{\vartheta^n} \psi
+ (1 - \psi) \,.
\end{equation}
Note that $\tilde{\upalpha}^n$ is bounded by \cref{EA3.2A}.

\subsection{Diffusion-scaled processes}

Let $\sI_0 \df \{i\in\sI\colon \gamma_i = 0 \}$.
If $\sI_0\ne\varnothing$, then,
Without loss of generality, we assume that $\sI_0 = \{1,\dotsc,\abs{\sI_0}\}$,
where $\abs{\sI_0}$ denotes the cardinality of the set $\sI_0$.
In \cref{D4.1} below, we introduce a modified priority scheduling policy
which can be described as follows:
First, $\lfloor\nicefrac{n\rho_i}{\sum_{i\in\sI_0}\rho_i}\rfloor\wedge x_i$
servers are allocated to each class $i\in\sI_0$. 
Then, the remaining servers are allocated following the static priority rule.

\begin{definition}\label{D4.1}
The Markov policy $\Check{z}^n$ is defined by
\begin{equation*}
\Check{z}^n_i(x) \,=\, \Biggl\lfloor\frac{n\rho_i}{\sum_{i\in\sI_0}\rho_i} + 
\Biggl(n - \sum_{j\in\sI_0}\biggl(x_j\wedge
\biggl\lfloor\frac{n\rho_j}{\sum_{i\in\sI_0}\rho_i}\biggr\rfloor\biggr) - 
\sum_{j=1}^{i-1}\biggl(x_j 
- \biggl\lfloor\frac{n\rho_j}{\sum_{i\in\sI_0}\rho_i}\biggr\rfloor\biggr)^+ \Biggr)^+  
\Biggr\rfloor \wedge x_i\,, \quad \forall\, i \in \sI_0\,, 
\end{equation*}
and
\begin{equation*}
\Check{z}^n_i(x) \,\df\, 
x_i \wedge \Biggl(n - \sum_{j=1}^{i-1}x_{j} \Biggr)^+\,, 
\quad \forall\, i\in\sI \setminus\sI_0\,.
\end{equation*}
We let $\Check{q}^n_i(x) \df x_i - \Check{z}^n_i(x)$, $i\in\sI$.
\end{definition}

We define the `unscaled'
process $\Breve{X}^n$ by 
\begin{equation}\label{E-checkX}
\begin{aligned}
\Breve{X}^n_i(t) 
&\,\df\, X^n(0) + A^n_i(t) - 
S^n_{i}(t) \\
& \qquad
- R^n_{*,i}\biggl(\gamma^n_i\int_0^{t}
\bigl(\Breve{X}^n_i(s) - n\mu^n_i\rho_i\cR^n(s) - Z_i^n(s)\bigr)\,\D{s}\biggr)
+ n\mu^n_i\rho_i\cR^n(t) \\
&\,=\, 
X^n_i(t) + n\mu^n_i\rho_i\cR^n(t) \quad \text{a.s.}
\end{aligned}
\end{equation}
for $i\in\sI$ and $t\ge 0$, 
where $\cR^n(t)$ is the residual time process for the system in the `down' state
given by
\begin{equation*}
\cR^n(t) \,=\, \sum_{k=1}^{N^n_{\mathsf{u}}(t)}d^n_k
- \int_0^{t}\bigl(1 - \Psi^n(s)\bigr)\,\D{s}\,,
\end{equation*}
and $N^n_{\mathsf{u}}(t)$ is the process counting the number of 
completed `up' periods by time $t$.
Here, the second equality in \cref{E-checkX} follows by the
fact that given $X^n(0)$, $\Psi^n$ and $Z^n$,
the evolution equation in \cref{E-dynamic} admits a unique solution.
Also, if $\Psi^n(t) = 1$, then $\cR^n(t) = 0$ and thus
$\Breve{X}^n(t) = X^n(t)$ a.s.
Note that under a Markov policy $z^n \in \fZsm^n$, the process
$(\Breve{X}^n,H^n,\Psi^n,K^n)$ is Markov with state space
\begin{equation*}
\fD \,\df\, \bigl\{(\Breve{x},h,\psi,k)\in\RR^d_+\times\RR^d_+\times\{0,1\}
\times\RR_+\colon
k \equiv 0\text{ if } \psi=1 \bigr\}\,,
\end{equation*}
and
\begin{equation*}
Z^n(t) \,=\, z^n\bigl(\Breve{X}^n(t) - n\mu^n_i\rho_i\cR^n(t),
H^n(t),\Psi^n(t), K^n(t)\bigr)\,.
\end{equation*}
Under $z^n\in\fZsm^n$,
the generator of $(\Breve{X}^n,H^n,\Psi^n,K^n)$ denoted by $\Breve{\Lg}_{n}^{z^n}$ 
is given by
\begin{equation}\label{ES4.3A}
\Breve{\Lg}^{z^n}_nf(\Breve{x},h,\psi,k) \,=\, 
\overline{\Lg}^{z^n}_{n,\psi}f(\Breve{x},h,\psi,k) + 
\cI_{n,\psi}f(\Breve{x},h,\psi,k) 
+ \cQ_{n,\psi}f(\Breve{x},h,\psi,k) 
\end{equation}
for $(\Breve{x},h,\psi,k)\in\mathfrak{D}$ and 
$f\in\Cc_b(\Rd\times\RR^d_+\times\{0,1\}\times\RR_+)$.
The operators on the right-hand side of \cref{ES4.3A} are defined by
\begin{align}
\overline{\Lg}^{z^n}_{n,\psi} f(\Breve{x},h,\psi,k) &\,\df\,
\sum_{i\in\sI}\frac{\partial f(\Breve{x},h,\psi,k)}{\partial h_i} +
\sum_{i\in\sI}r_i^n(h_i) \bigl(f(\Breve{x}+ e_i,h - h_ie_i,\psi,k) 
- f(\Breve{x},h,\psi,k)\bigr)
\nonumber\\
&\mspace{-80mu}
+ \psi\,\sum_{i\in\sI}\bigl(\mu_i^n z^n_i(\Breve{x},h,1,0) 
+ \gamma_i^n q^n_i(\Breve{x},z^n) \bigr)
\bigl(f(\Breve{x} - e_i,h,1,0) - f(\Breve{x},h,1,0)\bigr) \nonumber\\
&\mspace{-80mu} + (1 - \psi) \sum_{i\in\sI}\gamma^n_i 
\bigl(f(\Breve{x}- e_i,h,0,k) - f(\Breve{x},h,0,k)\bigr) 
\int_{\RR_*}q^n_i\bigl(\Breve{x} - n\upmu^n(y - k),z^n\bigr)\,
\Tilde{F}^{d^n_1}_{\Breve{x},k}(\D{y}) \nonumber\\
&\mspace{-80mu} - (1 - \psi)\sum_{i\in\sI}n\rho_i\mu^n_i
\frac{\partial f(\Breve{x},h,0,k)}{\partial \Breve{x}_i} \label{ES4.3B}
\intertext{with $q^n(\breve{x},z^n) = \breve{x} - z^n$,} 
\cI_{n,\psi} f(\Breve{x},h,\psi,k)&\,\df\, 
\psi\,\beta^n_{\mathsf{u}}\int_{\RR_*}
\biggl(f\Bigl(\Breve{x} + \frac{n}{\vartheta^n}\upmu^ny,h,0,0\Bigr) 
- f(\Breve{x},h,1,0)\biggr)\,F^{d_1}(\D{y})\,,\label{ES4.3C}
\intertext{and}
\cQ_{n,\psi}f(\Breve{x},h,\psi,k) &\,\df\, 
(1-\psi)\biggl(\beta^n_{\mathsf{d}}(k)
\bigl(f(\Breve{x},h,1,0) - f(\Breve{x},h,0,k)\bigr) + 
\frac{\partial f(\Breve{x},h,0,k)}{\partial k} \biggr)\,. \label{ES4.3D}
\end{align}
In \cref{ES4.3B}, $\upmu^n \df (\mu^n_1\rho_1,\dotsc,\mu^n_d\rho_d)'$,
$\Tilde{F}_{\Breve{x},k}^{d^n_1}$ denotes the conditional distribution of $d^n_1$
given $\{d^n_1 > k\}$,
and $\{n\mu^n_i\rho_i(d^n_1 - k) \le \Breve{x}_i \colon i\in\sI\}$.

The first two terms on the right-hand side  of \cref{ES4.3B} correspond to the
extended generator 
associated with the renewal arrival processes. Compare this to \cref{E-sH}.
Conditioning on the alternative renewal process $\Psi^n$ in the `up' state, 
the third term on the right-hand side  of \cref{ES4.3B} corresponds to the service
and abandonment processes,
and $\cI_{n,\psi}$ corresponds to the residual time process $\cR^n$
together with $\Psi^n$.
Similarly, conditioning on the alternative renewal process in the `down' state, 
the last two terms on the right-hand side  of \cref{ES4.3B} correspond to
the abandonment process and $\cR^n$, respectively, 
and $\cQ_{n,\psi}$ corresponds to $(\Psi^n,K^n)$.
The generators in \cref{ES4.3C,ES4.3D} are analogous to the extended generator
associated with 
the alternating renewal process in \cref{E-sK}.

\begin{remark}
We sketch the derivation of $\cI_{n,\psi}$. 
The rest of the terms in \cref{ES4.3A} follow by the calculation below and \cref{R4.1}.
To simplify the calculation, we assume that the arrival processes are Poisson, 
and only consider the $i^{\mathrm{th}}$ component $(\Breve{X}^n_i,\Psi^n,K^n)$, $i\in\sI$.
Note that $K^n(t) = 0$ when $\Psi^n(t) = 1$.
Since there are no simultaneous jumps w.p.1., here
we only consider the jumps caused by $\Psi^n$, 
that is, we consider 
\begin{equation*}
\begin{aligned}
\sum_{j\in\NN}\Bigl(\Exp_{\Breve{x},1,0}\bigl[f(\Breve{X}^n_i(t+s),\Psi^n(t+s),K^n(t+s))\bigm| 
\Breve{N}^n(t+s)-\Breve{N}^n(t) = j\bigr] - f(\Breve{x},1,0)\Bigr) p^n_{j}(t,s)\,,
\end{aligned}
\end{equation*}
for $s,t\ge 0$,
where $\Breve{N}^n(t)$ denotes the number of jumps of $\Psi^n$ up to time $t$,
and $p^n_j(t,s) = \Prob\bigl(\Breve{N}^n(t+s)-\Breve{N}^n(t) = j\bigr)$, $j\in\NN$.
By the memoryless property of `up' times, and using the same calculation as 
in \cref{R4.1} for `down' times, it is straightforward to check that
\begin{equation*}
\lim_{s\searrow 0}\,\frac{1}{s}\, p^n_1(t,s) \,=\, \beta^n_{\mathsf{u}}\,, \quad 
\text{and} \quad \lim_{s\searrow 0}\,\frac{1}{s}\, p^n_j(t,s) \,=\, 0 
\quad \text{for } j\ge2\,,
\end{equation*}
and for any $t\ge 0$. 
By the continuity of $K^n$, we have
\begin{equation*}
\lim_{s\searrow 0}\,
\Prob\bigl(\Breve{N}^n(t+s) - \Breve{N}^n(t) = 1,K^n(t+s) = 0 \bigm| K^n(t)= 0\bigr)
\,=\, 1\,.
\end{equation*}
Thus,
\begin{equation*}
\begin{aligned}
&\lim_{s\searrow 0}\, \Exp_{\Breve{x},1,0}
\bigl[f(\Breve{X}^n_i(t+s),\Psi^n(t+s),K^n(t+s))\bigm| 
\Breve{N}^n(t+s)- \Breve{N}^n(t) = 1\bigr] \\
&\mspace{400mu} \,=\, \Exp_{\Breve{x},1,0}
\biggl[f\Bigl(\Breve{x} + n\mu^n_i\rho_i\frac{1}{\vartheta^n}d_1,0,0\Bigr)\biggr]\,.
\end{aligned}
\end{equation*}
This proves \cref{ES4.3C}.
\end{remark}

\begin{definition}\label{D4.2}
We define $\Bar{x}^n_i(\Breve{x}) \df \Breve{x}_i - \rho_in$, 
$i\in\sI$, 
\begin{equation*}
\Bar{x} \,=\, \Bar{x}^n(\Breve{x}) 
\,\df\, \left(\Bar{x}^n_1(\Breve{x}),\dotsc,\Bar{x}^n_d(\Breve{x})\right)'\,,
\quad
\Tilde{x} \,=\,
\Tilde{x}^n(\Breve{x}) \,\df\, n^{-\nicefrac{1}{2}}\Bar{x}^n(\Breve{x})\,,
\quad\Breve{x}\in\RR^d\,,
\end{equation*}
and
\begin{equation*}
\fA^n_R \,\df\, \bigl\{x\in\Rd \colon \abs{x-\rho n}\,\le\,R \sqrt{n} \bigr\}
\end{equation*}
for a positive constant $R$.
\end{definition}

Let $\widetilde{\Lg}^{z_n}_n$ denote the generator 
of the scaled joint process $\widetilde{\Xi}^n 
\df (\widetilde{X}^n,H^n,\Psi^n,K^n)$
with $\widetilde{X}^n \df n^{\nicefrac{-1}{2}}(\Breve{X}^n - n\rho)$.
The state space of $\widetilde{\Xi}^n$ is given by
\begin{equation*}	
\widetilde{\mathfrak{D}}^n\,\df\,
\bigl\{(\Tilde{x}^n(\Breve{x}),h,\psi,k)\in\RR^d\times\RR^d_+\times\{0,1\}\times\RR_+ 
\colon \Breve{x}\in\RR^d_+,\  k \equiv 0\text{ if } \psi=1 \bigr\}\,.
\end{equation*}
Then, under any $z^n\in\fZsm^n$, we have
\begin{equation}\label{ES4.3E}
\widetilde{\Lg}^{z_n}_nf(\Tilde{x},h,\psi,k)
\,=\, \Breve{\Lg}^{z_n}_n 
f(\Tilde{x}^n(\Breve{x}),h,\psi,k)\,,
\end{equation}
for $f\in\Cc_b(\Rd\times\RR^d_+\times\{0,1\}\times\RR_+)$.

The next lemma concerns the ergodicity of the process $\widetilde{\Xi}^n$
under the modified priority policy in \cref{D4.1}.
Let $\Lyap_{\upkappa,\xi}(x) \df \sum_{i\in\sI}\xi_i\abs{x_i}^\upkappa$ for $x\in\Rd$, 
where $\upkappa>0$, and $\xi$ is a positive vector.
Define the function $\widetilde{\Lyap}^n_{\upkappa,\xi} \colon 
\Rd\times\RR^d_+\times\{0,1\}\times\RR_+ \to \RR$ by
\begin{equation}\label{ES4.3F}
\begin{aligned}
\widetilde{\Lyap}^n_{\upkappa,\xi}(x,h,\psi,k) &\,\df\, \Lyap_{\upkappa,\xi}(x) 
+ \sum_{i\in\sI} \eta^n_i(h_i)\bigl(\Lyap_{\upkappa,\xi}(x + n^{\nicefrac{-1}{2}}e_i) 
- \Lyap_{\upkappa,\xi}(x)\bigr) \\
&\mspace{30mu}+ \frac{\psi + \upalpha^n(k)}{\vartheta^n}\sum_{i\in\sI}
\mu_i^n\xi_i\Bigl(\Tilde{\Lyap}^n_{\upkappa,i}(x_i) + \eta^n_i(h_i)
\bigl(\Tilde{\Lyap}^n_{\upkappa,i}(x_i+ n^{\nicefrac{-1}{2}}) -
\Tilde{\Lyap}^n_{\upkappa,i}(x_i)\bigr)\Bigr)\,, 
\end{aligned}
\end{equation}
where $\eta^n_i$ and $\upalpha^n$ are as in \cref{E-eta} and \cref{E-upalpha},
respectively, 
and $\Tilde{\Lyap}^n_{\upkappa,i}(x_i) \df -\abs{x_i}^{\upkappa}$ for $x_i\in\RR_+$ and 
$i\in\sI\setminus\sI_0$,
and
\begin{equation*}
\Tilde{\Lyap}^n_{\upkappa,i}(x_i) \,\df\,
\begin{cases}
-\abs{x_i}^{\upkappa}\,, &\quad \text{for } x_i \,<\, 
\frac{\sqrt{n}\rho_i\sum_{j\in\sI\setminus\sI_0}\rho_i}
{\sum_{j\in\sI_0}\rho_i}\,, \\
- \frac{\sqrt{n}\rho_i\sum_{j\in\sI\setminus\sI_0}\rho_i}
{\sum_{j\in\sI_0}\rho_i}\abs{x_i}^{\upkappa-1}\,, &\quad \text{for } x_i \,\ge\, 
\frac{\sqrt{n}\rho_i\sum_{j\in\sI\setminus\sI_0}\rho_i}
{\sum_{j\in\sI_0}\rho_i}\,, 
\end{cases}
\quad \forall\,i\in\sI_0\,.
\end{equation*} 
The function $\widetilde{\Lyap}^n_{\upkappa,\xi}$ is constructed
in such a manner as to allow us to take advantage of the identities
in \cref{ES4.2A,ES4.2D}.
We define the set
\begin{equation*}
\cK_n(x) \,\df\, 
\biggl\{i\in\sI_0 \colon x_i \,\ge\, 
\frac{\sqrt{n}\rho_i\sum_{j\in\sI\setminus\sI_0}\rho_i}
{\sum_{j\in\sI_0}\rho_i} \biggr\}\,. 
\end{equation*}

Note that $\widetilde{\Lg}^{\Check{z}^n}_n$ denotes the generator of
$\widetilde{\Xi}^n$ under the modified priority scheduling policy in \cref{D4.1}.
We have the following lemma.

\begin{lemma}\label{L4.1}
Grant \cref{A2.1,A2.2,A3.2}.
For any even integer $\upkappa\ge2$, there exist positive
constants $\widetilde{C}_0$ and $\widetilde{C}_1$, a positive vector $\xi\in\RR^d_+$, and
$\Tilde{n}\in\NN$ such that:
\begin{equation}\label{EL4.1B}
\widetilde{\Lg}^{\Check{z}^n}_n 
\widetilde{\Lyap}^n_{\upkappa,\xi}(\Tilde{x},h,\psi,k) \,\le\, 
\widetilde{C}_0 - \widetilde{C}_1
\sum_{i\in\sI\setminus\cK_n(\Tilde{x})}\Lyap_{\upkappa,\xi}(\Tilde{x})
- \widetilde{C}_1 \sum_{i\in\cK_n(\Tilde{x})}
\Lyap_{\upkappa-1,\xi}(\Tilde{x})
\end{equation}
for all $n> \Tilde{n}$, and $(\Tilde{x},h,y,k)\in\widetilde{\fD}^n$.
As a consequence, for all large enough $n$,
$\widetilde{\Xi}^n$ is positive Harris recurrent
under the modified priority scheduling policy $\Check{z}^n$.
\end{lemma}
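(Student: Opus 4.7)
My plan is to compute $\widetilde{\Lg}^{\Check{z}^n}_n\widetilde{\Lyap}^n_{\upkappa,\xi}$ directly via the generator decomposition \cref{ES4.3A}, exploiting how the three-part structure of $\widetilde{\Lyap}^n_{\upkappa,\xi}$ is engineered to pair with the identities \cref{ES4.2A,ES4.2D}. Acting on the bulk $\Lyap_{\upkappa,\xi}(x)$ together with its age-correction $\sum_i\eta^n_i(h_i)\bigl(\Lyap_{\upkappa,\xi}(x+n^{\nicefrac{-1}{2}}e_i)-\Lyap_{\upkappa,\xi}(x)\bigr)$ with the $\partial_{h_i}$ and $r^n_i(h_i)$ parts of $\overline{\Lg}^{\Check{z}^n}_{n,\psi}$, identity \cref{ES4.2A} forces the possibly unbounded hazard rates $r^n_i(h_i)$ to cancel, leaving $\lambda^n_i\bigl(\Lyap_{\upkappa,\xi}(x+n^{\nicefrac{-1}{2}}e_i)-\Lyap_{\upkappa,\xi}(x)\bigr)$, which is the infinitesimal rate contribution from class-$i$ renewal arrivals. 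Analogously, when $\cI_{n,\psi}+\cQ_{n,\psi}$ acts on the phase-correction $\tilde{\upalpha}^n(\psi,k)\sum_i\mu^n_i\xi_i(\cdots)$, identity \cref{ES4.2D} cancels the downtime hazard $\beta^n_{\mathsf{d}}(k)$, reducing its contribution to $\order(1/\vartheta^n)$ remainders.

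After these cancellations I would Taylor-expand each finite difference $\Lyap_{\upkappa,\xi}(x+n^{\nicefrac{-1}{2}}e_i)-\Lyap_{\upkappa,\xi}(x)$ in powers of $n^{\nicefrac{-1}{2}}$. The critical-loading condition in \cref{A2.1} makes the $n^{\nicefrac{-1}{2}}$-order balance of arrival, service, and abandonment rates cancel in the $\psi=1$ phase, leaving an $\order(1)$ first-order drift and an $\order(1)$ second-order (diffusion-type) term. For $i\in\sI\setminus\sI_0$, the abandonment contribution $\gamma^n_i\Check{q}^n_i\partial_i\Lyap_{\upkappa,\xi}$ is proportional to $|x_i|^\upkappa$ for $x_i$ large, furnishing a negative drift of order $|x|^\upkappa$ on $\sI\setminus\cK_n(\Tilde{x})$. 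For $i\in\sI_0$, the mechanism is instead the modified priority allocation: on $\{x_i<\sqrt{n}\rho_i\sum_{j\notin\sI_0}\rho_j/\sum_{j\in\sI_0}\rho_j\}$ one has $\Check{z}^n_i=x_i$, which makes the arrival/service drift of order $-|x_i|^\upkappa$; once $x_i$ exceeds this threshold, $\Check{z}^n_i$ saturates at $\lfloor n\rho_i/\sum_{j\in\sI_0}\rho_j\rfloor$, the instantaneous service deficit is only $\order(\sqrt{n})$, and the resulting drift is merely of order $-|x_i|^{\upkappa-1}$. The piecewise definition of $\Tilde{\Lyap}^n_{\upkappa,i}$ on $\sI_0$ is chosen precisely so that $\partial_{x_i}\Tilde{\Lyap}^n_{\upkappa,i}$ becomes proportional to $|x_i|^{\upkappa-1}$ on $\cK_n(\Tilde{x})$, reproducing the dichotomy on the right-hand side of \cref{EL4.1B}.

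The hardest part of the proof is managing the $\psi=0$ phase, because the bulk $\Lyap_{\upkappa,\xi}$ then incurs the large deterministic flux $-n\rho_i\mu^n_i\partial_{\breve{x}_i}\Lyap_{\upkappa,\xi}$ from the last summand of \cref{ES4.3B}, of order $\sqrt{n}|x|^{\upkappa-1}$, which is too large to absorb directly. The phase-correction is what saves the computation: invoking \cref{ES4.2C} and the fact that $(1-\psi)$ equals $1$ in \cref{ES4.2D}, the $\cQ_{n,0}$ action on the phase-correction cancels the $\beta^n_{\mathsf{d}}$-hazard and contributes the extra negative piece $\sum_i\mu^n_i\xi_i\Tilde{\Lyap}^n_{\upkappa,i}(x_i)$, which has order $-|x|^\upkappa$ below the threshold and order $-\sqrt{n}|x|^{\upkappa-1}$ above it, thereby dominating the flux residual up to an $\order(|x|^\upkappa)$ deficit. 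The compound-Poisson jump $\cI_{n,1}$ applied to the bulk produces a contribution bounded by the finite moments of $d_1$ guaranteed by \cref{A3.2} and is absorbed into $\widetilde{C}_0$. Tracking signs and orders of $n$ through this balance, and choosing the vector $\xi$ so that cross-class coupling in the priority mechanism is dominated, is the delicate step. Once \cref{EL4.1B} is established uniformly for $n$ large, positive Harris recurrence of $\widetilde{\Xi}^n$ under $\Check{z}^n$ follows from \cite[Theorem~4.2]{MT-III}: \cref{A3.2}(i) yields $\phi$-irreducibility of the joint Markov process via the densities of $G_i$ and $d_1$ together with a standard regenerative argument, which also shows that sublevel sets of $\widetilde{\Lyap}^n_{\upkappa,\xi}$ in $\widetilde{\fD}^n$ are petite; and \cref{EL4.1B} supplies the required negative drift outside a compact set.
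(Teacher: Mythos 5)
Your overall plan mirrors the paper's strategy: the same three-part test function $\widetilde{\Lyap}^n_{\upkappa,\xi}$, the same cancellations of the hazard rates $r^n_i$ and $\beta^n_{\mathsf{d}}$ via the identities \cref{ES4.2A,ES4.2D}, the separate treatment of the $\psi=1$ and $\psi=0$ phases, the choice of a geometrically decaying weight vector $\xi$ to tame the priority coupling, and the concluding appeal to \cite[Theorem~4.2]{MT-III}. The paper routes the estimate through the auxiliary \cref{LB.1,LB.2} in unscaled $\Breve{x}$-coordinates rather than expanding everything at once, but the substance is the same.

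There is, however, a genuine gap in your treatment of the $\psi=0$ flux. You correctly identify the deterministic flux $-(1-\psi)\sum_i n\rho_i\mu^n_i\partial_{\Breve{x}_i}$ as contributing $\order\bigl(\sqrt{n}\,\abs{\Tilde{x}}^{\upkappa-1}\bigr)$ in the diffusion scale and say it is too large to absorb directly, but then claim the $\cQ_{n,0}$ action on the phase-correction — order $-\abs{\Tilde{x}}^\upkappa$ below the threshold — dominates this flux residual. It cannot: Young's inequality gives $\sqrt{n}\,\abs{\Tilde{x}}^{\upkappa-1}\le \epsilon\abs{\Tilde{x}}^\upkappa + C_\epsilon n^{\nicefrac{\upkappa}{2}}$, and the constant $C_\epsilon n^{\nicefrac{\upkappa}{2}}$ is unbounded in $n$, so the uniform $\widetilde{C}_0$ demanded by \cref{EL4.1B} would not exist. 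The step you are missing is that the age/hazard terms in $\overline{\Lg}^{\Check{z}^n}_{n,0}$ (which carry no factor of $\psi$) still produce the arrival drift $\lambda^n_i\bigl(\Lyap(\cdot + n^{-\nicefrac{1}{2}}e_i)-\Lyap\bigr)$ even in the `down' state, and this pairs against the flux: by \cref{A2.1}, $\lambda^n_i - n\rho_i\mu^n_i = \order(\sqrt{n})$, so the two order-$n$ contributions cancel, leaving $\order(\sqrt{n})\order\bigl(\abs{\Bar{x}_i}^{\upkappa-1}\bigr)$ — that is, $\order(1)\abs{\Tilde{x}_i}^{\upkappa-1}$ after the scaling — which \emph{can} be absorbed via Young into the $\cQ_{n,0}$ negative term and a bounded constant. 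You restrict the critical-loading cancellation to the $\psi=1$ phase, but it is precisely what tames the $\psi=0$ flux; the role of the phase-correction is instead to supply the missing $-\abs{\Tilde{x}}^\upkappa$ drift in the `down' state where service is suspended.

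A minor inaccuracy along the same lines: the compound-Poisson term $\cI_{n,1}$ acting on the bulk is not simply bounded by a moment constant and absorbed into $\widetilde{C}_0$; by \cref{PL4.1D} it produces a full expansion $\sum_{j=1}^{\upkappa}\order(n^{\nicefrac{j}{2}})\order\bigl(\abs{\Bar{x}_i}^{\upkappa-j}\bigr)$ whose low-$j$ terms again require Young's inequality against the negative drift, not just the moments of $d_1$.
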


The proof of \cref{L4.1} is given in \cref{AppB}.
We continue with the following theorem.

\begin{theorem}\label{T4.1}
Grant \cref{A2.1,A2.2,A3.2}.
Under the scheduling policy $\Check{z}^n$ in \cref{D4.1}, 
and for any $\upkappa>0$, there exists $\Check{n}\in\NN$ such that
\begin{equation}\label{ET4.1A}
\sup_{n > \Check{n}}\,\limsup_{T\rightarrow\infty}\,\frac{1}{T}\,\Exp^{\Check{z}^n}
\left[\int_0^T\abs{\Hat{X}^n(s)}^\upkappa\,\D{s}\right] \,<\, \infty\,.
\end{equation}
\end{theorem}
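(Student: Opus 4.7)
The plan is to establish \cref{ET4.1A} by first obtaining the analogous long-run moment bound for the auxiliary scaled process $\widetilde{X}^n$ via the Foster--Lyapunov inequality of \cref{L4.1}, and then transferring it to $\hat X^n$ using the identity $\hat X^n_i(t) = \widetilde{X}^n_i(t) - \sqrt{n}\mu^n_i\rho_i\cR^n(t)$ implied by \cref{E-checkX}, exploiting the fact that $\sqrt{n}\cR^n$ has vanishing Ces\`aro moments.

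Given $\upkappa>0$ as in the theorem, pick an even integer $\upkappa'\ge\upkappa+2$ and apply \cref{L4.1} with this $\upkappa'$. On $\{\cK_n(\Tilde{x})=\varnothing\}$ the right-hand side of \cref{EL4.1B} equals $\widetilde{C}_0 - d\widetilde{C}_1\Lyap_{\upkappa',\xi}(\Tilde{x})$, and the elementary bound $\Lyap_{\upkappa',\xi}(\Tilde{x})\ge\Lyap_{\upkappa'-1,\xi}(\Tilde{x}) - C$ holds; on the complement, using $\abs{\cK_n(\Tilde{x})}\ge 1$ and dropping the nonpositive first sum, the right-hand side is bounded above by $\widetilde{C}_0 - \widetilde{C}_1\Lyap_{\upkappa'-1,\xi}(\Tilde{x})$. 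Hence there exist $C_0,C_1>0$ independent of $n$ such that
\begin{equation*}
\widetilde{\Lg}^{\Check{z}^n}_n\widetilde{\Lyap}^n_{\upkappa',\xi}(\Tilde{x},h,\psi,k) \,\le\, C_0 - C_1\Lyap_{\upkappa'-1,\xi}(\Tilde{x})
\end{equation*}
for all $n>\Tilde{n}$ and $(\Tilde{x},h,\psi,k)\in\widetilde{\mathfrak{D}}^n$. Applying Dynkin's formula stopped at $T\wedge\uptau_R$, passing $R\to\infty$ using positive Harris recurrence from \cref{L4.1} and Fatou's lemma, and discarding the nonnegative terminal term, we obtain
\begin{equation*}
\Exp^{\Check{z}^n}\!\int_0^T\Lyap_{\upkappa'-1,\xi}\bigl(\widetilde{X}^n(s)\bigr)\,\D{s} \,\le\, \tfrac{1}{C_1}\bigl(C_0T + \widetilde{\Lyap}^n_{\upkappa',\xi}(\widetilde{\Xi}^n(0))\bigr).
\end{equation*}
The initial Lyapunov value is uniformly bounded in $n$, since $\hat X^n(0)\to x$, the functions $\eta^n_i$ and $\upalpha^n$ are bounded by \cref{EA3.2A}, and the correction terms in \cref{ES4.3F} carry the factor $1/\vartheta^n\to 0$. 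Using the estimate $\Lyap_{\upkappa'-1,\xi}(\Tilde{x})\ge c\abs{\Tilde{x}}^{\upkappa'-1}-c'$ and $\upkappa'-1\ge\upkappa$, dividing by $T$ and letting $T\to\infty$ yields $\sup_{n>\Tilde{n}}\limsup_{T\to\infty}\frac{1}{T}\Exp^{\Check{z}^n}\!\int_0^T\abs{\widetilde{X}^n(s)}^\upkappa\,\D{s}<\infty$.

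For the transfer to $\hat X^n$, the inequality $\abs{\hat X^n(t)}^\upkappa\le C_\upkappa\bigl(\abs{\widetilde{X}^n(t)}^\upkappa + n^{\upkappa/2}\abs{\cR^n(t)}^\upkappa\bigr)$ reduces matters to bounding $\limsup_T\frac{1}{T}\Exp^{\Check{z}^n}\!\int_0^T n^{\upkappa/2}\abs{\cR^n(s)}^\upkappa\,\D{s}$ uniformly in $n$. Now $\cR^n(t)$ vanishes on $\{\Psi^n(t)=1\}$, and otherwise is dominated by the forward downtime $d^n_{N^n(t)+1}=d_{N^n(t)+1}/\vartheta^n$. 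Since $\sqrt{n}/\vartheta^n\to 1/\vartheta>0$ and $\Exp[d_1^\upkappa]<\infty$ by \cref{A3.2}, $\Exp\bigl[(\sqrt{n}\cR^n(t))^\upkappa\bigm|\Psi^n(t)=0\bigr]=\order(1)$, while the long-run fraction of time spent in the `down' state is $\order((\vartheta^n)^{-1})=\order(n^{-\nicefrac{1}{2}})$, a fact that also follows from \cref{ES4.2D} via Dynkin applied to the bounded function $\tilde{\upalpha}^n$. Hence $\Exp\bigl[n^{\upkappa/2}\abs{\cR^n(t)}^\upkappa\bigr]=\order(n^{-\nicefrac{1}{2}})$, so the corresponding time-averaged bound is uniform (in fact vanishing) in $n$. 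Combining with the bound for $\widetilde{X}^n$ yields \cref{ET4.1A} with $\Check{n}\df\Tilde{n}$.

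The main technical obstacle is verifying the uniformity in $n$ of the constants $C_0,C_1$ in the drift inequality and of the bound on the initial Lyapunov value; a secondary delicate point is the Fatou--localization step that upgrades Dynkin's identity from $T\wedge\uptau_R$ to $T$, which relies crucially on the positive Harris recurrence established in \cref{L4.1}.
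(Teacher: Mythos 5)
Your proposal follows essentially the same strategy as the paper: apply the Foster--Lyapunov inequality of \cref{L4.1} to $\widetilde{\Lyap}^n_{\upkappa,\xi}$ for an even-integer exponent, integrate (Dynkin), read off a long-run Ces\`aro moment bound for $\widetilde{X}^n$, and then transfer to $\Hat{X}^n$ by showing that the discrepancy $\sqrt{n}\mu^n_i\rho_i\cR^n$ has vanishing Ces\`aro moments, via the facts that the scaled residual downtime has bounded conditional moments and the long-run down fraction is $\order(n^{-\nicefrac{1}{2}})$. The paper carries out the Dynkin step at fixed $T$ without an explicit localization argument and works with exponent $\upkappa-1$ directly, whereas you localize at $T\wedge\uptau_R$ and pick $\upkappa'\ge\upkappa+2$; these are presentational differences, not a different route. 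One small inaccuracy: the terminal term $\Exp[\widetilde{\Lyap}^n_{\upkappa',\xi}(\widetilde{\Xi}^n(T))]$ is not nonnegative in general (the $\eta^n_i$- and $\upalpha^n$-corrections in \cref{ES4.3F} can be negative, and $\Tilde{\Lyap}^n_{\upkappa,i}$ is nonpositive), so ``discard the nonnegative terminal term'' should be replaced by the paper's two-sided comparison $c_0(\Lyap_{\upkappa',\xi}-1)\le\widetilde{\Lyap}^n_{\upkappa',\xi}\le c_1(1+\Lyap_{\upkappa',\xi})$, which bounds it below by a constant. This is a cosmetic fix and does not affect the argument.
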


\begin{proof}
Let $\upkappa \ge 2$ be an arbitrary even integer.
By \cref{EL4.1B}, we have 
\begin{equation}\label{PT4.1A}
\begin{aligned}
\Exp^{\Check{z}^n}\bigl[\widetilde{\Lyap}^n_{\upkappa,\xi}
\bigl(\widetilde{\Xi}^n(T)\bigr)\bigr]
- \Exp^{\Check{z}^n}\bigl[\widetilde{\Lyap}^n_{\upkappa,\xi}(\widetilde{\Xi}^n(0))\bigr]
&\,=\, 
\Exp^{\Check{z}^n}\biggl[\int_0^T\widetilde{\Lg}^{\Check{z}^n}_n
\widetilde{\Lyap}^n_{\upkappa,\xi} 
\bigl(\widetilde{\Xi}^n(s)\bigr)\,\D{s}\biggr] \\
&\,\le\, \widetilde{C}_0 T - \widetilde{C}_1
\Exp^{\Check{z}^n}\biggl[\int_0^T\Lyap_{\upkappa-1,\xi}
\bigl(\widetilde{X}^n(s)\bigr)\,\D{s}\biggr]\,.
\end{aligned} 
\end{equation}
Since $(\vartheta^n)^{-1}$ is of order $n^{-\nicefrac{1}{2}}$ by \cref{A2.2},
it follows by Young's inequality together with \cref{EA3.2A} 
that there exist some positive constants $c_0$ and $c_1$ such that
$ c_0(\Lyap_{\upkappa,\xi} - 1) \le
\widetilde{\Lyap}^n_{\upkappa,\xi} \le c_1(1 + \Lyap_{\upkappa,\xi})$
for all large $n$.
Note that $\Hat{X}^n(0) = \widetilde{X}^n(0)$.
Thus, by \cref{PT4.1A}, we obtain
\begin{equation}\label{PT4.1B}
\widetilde{C}_1
\Exp^{\Check{z}^n}
\biggl[\int_0^T\Lyap_{\upkappa-1,\xi}\bigl(\widetilde{X}^n(s)\bigr)\,\D{s}\biggr]
\,\le\,
(\widetilde{C}_0 + c_0)T 
+ c_1\bigl(1 + \Lyap_{\upkappa,\xi}\bigl(\Hat{X}^n(0)\bigr)\bigr)
\end{equation}
for some positive constants ${C}_3$ and ${C}_4$.
By dividing both sides of \cref{PT4.1B} by $T$, and taking $T\rightarrow\infty$,
we have
\begin{equation}\label{PT4.1C}
\sup_{n > \Check{n}}\,\limsup_{T\rightarrow\infty}\,\frac{1}{T}\,\Exp^{\Check{z}^n}
\left[\int_0^T\abs{\widetilde{X}^n(s)}^{\upkappa-1}\,\D{s}\right] \,<\, \infty\,.
\end{equation}
Let $\Exp \equiv \Exp^{U^n}$ for some admissible scheduling policy $U^n$.
We have
\begin{equation}\label{PT4.1D}
\frac{1}{T}\Exp\biggl[\int_0^{T}
\abs{\Hat{X}^n_i(s) - \widetilde{X}^n_i(s)}^{\upkappa-1}
\,\D{s}\biggr] \,=\, (\mu_i^n\rho_i)^{\upkappa-1}\frac{1}{T}
\Exp\biggl[\int_0^{T}\bigl(\sqrt{n}\cR^n(s)\bigr)^{\upkappa-1}\,\D{s}\biggr]
\quad \forall\,i\in\sI\,.
\end{equation}
We use the identity
\begin{equation}\label{PT4.1E}
\Exp\bigl[\bigl(\sqrt{n}\cR^n(s)\bigr)^{\upkappa-1}\bigr] \,=\, 
\Exp\bigl[\bigl(\sqrt{n}\cR^n(s)\bigr)^{\upkappa-1}\,|\, \cR^n(s)>0\bigr]
\Prob(\cR^n(s)>0)
\end{equation}
for any $s\ge 0$. 
Here $\cR^n(s)$ is the residual time of the system in the `down' state, and thus 
$\Exp[(\sqrt{n}\cR^n(s))^{\upkappa-1}|\cR^n(s)>0] 
\le \Exp[(\sqrt{n} d^n_1)^{\upkappa-1}] \le c_2$ 
for some positive constant $c_2$, by \cref{A2.2,EA3.2A}.
Also, $\Prob(\cR^n(s)>0) = \Prob(\Psi^n(s) = 0)$,
and it follows by \cite[Theorem~3.4.4]{Ross96} that
\begin{equation*}
\lim_{s\rightarrow\infty}\,\Prob(\Psi^n(s) = 0) \,=\, 
\frac{(\vartheta^n)^{-1}}{(\beta^n_{\mathsf{u}})^{-1}+ (\vartheta^n)^{-1}}\,,
\end{equation*} 
which is of order $n^{-\nicefrac{1}{2}}$ by \cref{A2.2}.
Therefore, applying \cref{PT4.1E}, we obtain
\begin{equation}\label{PT4.1F}
\lim_{(n,T)\rightarrow \infty}\frac{1}{T}\,\Exp\biggl[
\int_0^{T}\bigl(\sqrt{n}\cR^n(s)\bigr)^{\upkappa-1}\,\D{s}\biggr] \,=\, 0\,.
\end{equation}
It follows by \cref{PT4.1D,PT4.1F} that
\begin{equation}\label{PT4.1G}
\lim_{(n,T)\rightarrow\infty}\,\frac{1}{T}\,\Exp\biggl[ 
\int_0^{T}\norm{\Hat{X}^n(s) - \widetilde{X}^n(s)}^{\upkappa-1}\,\D{s}\biggr]
\,=\, 0\,.
\end{equation}
Thus  \cref{ET4.1A} follows by \cref{PT4.1C,PT4.1G}.
This completes the proof.
\end{proof}

\begin{definition}\label{D4.3}
We define the \emph{quantization function} 
$\varpi \colon \RR^d_+ \to \ZZ^d_+$ by
\begin{equation*}
\varpi(x) \,\df\, \left(\lfloor x_1\rfloor,\dotsc,\lfloor x_{d-1}\rfloor, 
\lfloor x_{d}  \rfloor + \sum_{i=1}^d(x_i - \lfloor x_i \rfloor)\right)\,.
\end{equation*}
For a sequence $v^n\colon \Rd \to \cS$, $n\in\NN$, of
continuous functions satisfying
$v^n\bigl(\Tilde{x}^n(x)\bigr) = e_d$ if ${x}\notin\fA^n_R$, $R>1$,
with $\fA^n_R$ as in \cref{D4.2},
we define the map
\begin{equation*}
q^n[v^n](x) \,\df\, \begin{cases}\varpi\bigl(\bigl(\langle e,x \rangle - n\bigr)^+
v^n\bigl(\Tilde{x}^n(x)\bigr)\bigr) \quad  &\text{for }
\sup_{i\in\sI}\abs{\Tilde{x}^n(x)}\,\le\, \frac{1}{2d}\sqrt{n} 
\bigl(\min_{i}\, \rho_i\bigr)\,,\\[3pt] 
\Check{q}^n(x) \quad  &\text{for } 
\sup_{i\in\sI}\abs{\Tilde{x}^n(x)}\,>\, \frac{1}{2d}\sqrt{n}
\bigl(\min_{i}\, \rho_i\bigr)\,,
\end{cases}
\end{equation*}
and the scheduling policy $z^n[v^n](x) \df x - q^n[v^n](x)$
\end{definition}

The following corollary is used to prove the upper bound for the ergodic 
control problem in \cref{S5.3.2}.

\begin{corollary}\label{C4.1}
Under the scheduling policy $z^n[v^n]$ in \cref{D4.3},
the conclusions in \cref{L4.1,T4.1} hold.  
\end{corollary}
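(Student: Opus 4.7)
My plan is to establish a Foster--Lyapunov drift inequality for the generator $\widetilde{\Lg}^{z^n[v^n]}_n$ of the same qualitative form as \cref{EL4.1B}, and then invoke the arguments in the proofs of \cref{L4.1,T4.1} verbatim. The positive Harris recurrence assertion of \cref{L4.1} and the long-run moment bound \cref{ET4.1A} depend on the scheduling policy only through \cref{EL4.1B}; the residual-time analysis in \crefrange{PT4.1D}{PT4.1G} is independent of the policy.

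By \cref{D4.3}, the policies $z^n[v^n]$ and $\Check{z}^n$ coincide on the ``outer'' set $\bigl\{\sup_i\abs{\Tilde{x}_i} > \frac{1}{2d}\sqrt{n}\min_i\rho_i\bigr\}$, so their generators agree there and \cref{EL4.1B} applies without modification. It therefore remains to handle the inner region $\mathcal{O}^n \df \bigl\{\sup_i\abs{\Tilde{x}_i} \le \frac{1}{2d}\sqrt{n}\min_i\rho_i\bigr\}$, which I split into a fixed ball $\mathcal{O}_R \df \{\abs{\Tilde{x}}\le R\}$ (with $R$ as in \cref{D4.3}) and the annulus $\mathcal{O}^n\setminus\mathcal{O}_R$. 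On $\mathcal{O}_R$, the Lyapunov function $\widetilde{\Lyap}^n_{\upkappa,\xi}$ and its coordinate-wise finite differences are uniformly bounded for $n$ large (using the boundedness of $\eta^n_i$, $\upalpha^n$, and the scaled hazards implied by \cref{EA3.2A}), hence $\widetilde{\Lg}^{z^n[v^n]}_n \widetilde{\Lyap}^n_{\upkappa,\xi}$ is bounded above by a constant which can be absorbed into $\widetilde{C}_0$.

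On the annulus $\mathcal{O}^n\setminus\mathcal{O}_R$ we have $v^n\equiv e_d$ by \cref{D4.3}, and each coordinate satisfies $\abs{x_i - n\rho_i} \le \frac{n}{2d}\min_j\rho_j$. For $n$ large enough, this forces $x_i \le \lfloor n\rho_i/\sum_{j\in\sI_0}\rho_j\rfloor$ for every $i\in\sI_0$, so the priority rule in \cref{D4.1} sets $\Check{z}^n_i(x) = x_i$ for $i\in\sI_0$ and routes the entire excess $(\langle e,x\rangle-n)^+$ into class $d$. Since $v^n=e_d$ yields $q^n[v^n](x) = \varpi\bigl((\langle e,x\rangle - n)^+ e_d\bigr)$, which likewise places the whole queue into class $d$ up to a bounded quantization error from $\varpi$, the two policies differ by a vector of universally bounded norm on $\mathcal{O}^n\setminus\mathcal{O}_R$. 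The resulting difference of generators applied to $\widetilde{\Lyap}^n_{\upkappa,\xi}$ is therefore dominated by $C'\bigl(1 + \Lyap_{\upkappa-2,\xi}(\Tilde{x})\bigr)$, since $\widetilde{\Lyap}^n_{\upkappa,\xi}$ is polynomial of degree $\upkappa$. Combining with \cref{EL4.1B} gives the same drift inequality on $\mathcal{O}^n\setminus\mathcal{O}_R$, with slightly enlarged $\widetilde{C}_0$ and slightly reduced $\widetilde{C}_1$.

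Pasting the three cases together yields positive constants $\widetilde{C}'_0, \widetilde{C}'_1$ and $\tilde{n}'\in\NN$ such that the analogue of \cref{EL4.1B} holds for $\widetilde{\Lg}^{z^n[v^n]}_n$ for all $n>\tilde{n}'$. From this, positive Harris recurrence of $\widetilde{\Xi}^n$ under $z^n[v^n]$ follows from \cite[Theorem~4.2]{MT-III} as in the conclusion of \cref{L4.1}, and \cref{ET4.1A} follows by applying the Dynkin-type identity from the proof of \cref{T4.1} together with the residual-time estimate \crefrange{PT4.1D}{PT4.1G}. The main technical obstacle is the annular analysis above: one must verify that placing the entire queue into class $d$, as dictated by $v^n = e_d$, remains compatible with the Lyapunov function tailored to $\Check{z}^n$, which reduces to the key observation that on $\mathcal{O}^n\setminus\mathcal{O}_R$ the coordinates $x_i$ with $i\in\sI_0$ lie below the priority threshold, so that $\Check{z}^n$ itself already routes the queue into class $d$ in this region.
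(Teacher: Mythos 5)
Your overall structure matches the paper's: split the state space into the compact ball $\fA^n_R$, the surrounding annulus where $v^n\equiv e_d$, and the outer region where $q^n[v^n]=\Check{q}^n$ by construction, then check \cref{EL4.1B} on each piece and invoke the policy-independent residual-time estimates \crefrange{PT4.1D}{PT4.1G} from the proof of \cref{T4.1}. The observation that \cref{ET4.1A} only uses the scheduling policy through the Foster--Lyapunov inequality is also correct.

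The gap is in the annulus analysis. You assert that $\abs{x_i - n\rho_i} \le \tfrac{n}{2d}\min_j\rho_j$ forces $x_i \le \lfloor n\rho_i/\sum_{j\in\sI_0}\rho_j\rfloor$ for every $i\in\sI_0$ once $n$ is large. Dividing by $n$, this would require
$\tfrac{\min_j\rho_j}{2d} \le \rho_i\bigl(\tfrac{1}{\sum_{j\in\sI_0}\rho_j}-1\bigr)$,
which can fail: for example with $d=3$, $\sI_0=\{1,2\}$, and $(\rho_1,\rho_2,\rho_3)=(0.05,0.9,0.05)$ one has $\lfloor n\rho_1/0.95\rfloor\approx 0.0526n$ while the constraint permits $x_1$ up to about $0.0583n$. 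The paper's route is different and avoids the per-coordinate threshold entirely: from the annulus constraint one deduces only that $\sum_{j<d}\Breve{x}_j \le n$, and this \emph{by itself} forces $\Check{z}^n_i(\Breve{x})=\Breve{x}_i$ for every $i<d$, as one verifies directly from the formula in \cref{D4.1} --- for $i\in\sI_0$ the additive spillover term $\bigl(n - \sum_{j\in\sI_0}(\cdots) - \sum_{j<i}(\cdots)^+\bigr)^+$ compensates precisely when $\sum_{j\in\sI_0}\Breve{x}_j\le n$, irrespective of whether $\Breve{x}_i$ exceeds $\lfloor n\rho_i/\sum_{j\in\sI_0}\rho_j\rfloor$. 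Once this is in hand, $\Check{q}^n(\Breve{x}) = (\langle e,\Breve{x}\rangle - n)^+e_d = q^n[e_d](\Breve{x})$ \emph{exactly} (your hedge about quantization error is superfluous, since $\varpi$ fixes integer multiples of $e_d$), so the generators coincide on the annulus and the perturbation estimate you propose is unnecessary. As written, the threshold claim is a false intermediate step and must be replaced by this direct verification.
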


\begin{proof}
For all sufficiently large $n$, we have
$q^n_i[v^n](\Breve{x}) \le 2dR\sqrt{n}$ for $\Breve{x}\in\fA^n_R$
(see also the proof of \cite[Lemma 5.1]{ABP15}).
If $\sup_{i\in\sI} \abs{\Tilde{x}^n_i(\Breve{x})} \le \frac{1}{d}
\sqrt{n}\bigl(\min_{i}\, \rho_i\bigr)$,
it is evident that $\sum_{i=1}^{d-1}\Breve{x}_i \le n$,
and thus $z^n[e_d]$ is equivalent to 
the modified priority policy on this set.
Therefore, the result follows
by the argument in \cref{L4.1,T4.1}.  
\end{proof}

\section{Asymptotic Optimality}\label{S5}

\subsection{Results concerning the limiting jump diffusion}

Recall that a stationary
Markov control $v$ is called stable if the process under $v$
is positive recurrent, and the set of such controls is denoted by $\Ussm$.
Let $\eom$ denote the set of ergodic
occupation measures, that is,
\begin{equation}\label{E-eom}
\eom \,\df\, \biggl\{\uppi\in\cP(\Rd\times\Act) \colon 
\int_{\Rd\times\Act}\Ag f(x,u)\,\uppi(\D{x},\D{u})\,=\, 0 \quad 
\forall\, f\in\Cc^{\infty}_c(\Rd)\biggr\}\,.
\end{equation}
See Section~2.1 in \cite{ACPZ19} for more details.

We summarize the characterization of optimal controls for the limiting jump diffusion 
in the following theorems. Recall the definition of $d_1$ in \cref{A2.2}.

\begin{theorem}\label{T5.1}
Assume that $\Exp[(d_1)^{m+1}] < \infty$ with $m$ as in \cref{ES3.2A}.
The following hold:
\begin{enumerate}
\item[\textup{(i)}]
For $\alpha >0$, $V_\alpha$ in \cref{ES3.2B} is the minimal nonnegative solution in
$\Cc^{2,r}(\Rd)$, $r\in(0,1)$, to the HJB equation
\begin{equation}\label{ET5.1A}
\min_{u\in\Act}\bigl[\Ag V_\alpha(x,u) + \rc(x,u) \bigr]\,=\, \alpha V_\alpha(x)
\quad \text{a.e. in }\Rd\,.
\end{equation}
In addition, $V_\alpha$ has at most polynomial growth with degree $m$.
Moreover, a stationary
Markov control $v$ is optimal for the $\alpha$-discounted problem if
and only if it is an a.e.\ measurable selector from the minimizer in \cref{ET5.1A}.

\item[\textup{(ii)}]
There exists a solution $V\in\Cc^{2,r}(\Rd)$, 
$r\in(0,1)$, 
to the HJB equation
\begin{equation}\label{ET5.1B}
\min_{u\in\Act}\bigl[\Ag V(x,u) + \rc(x,u) \bigr]\,=\, \varrho_*
\quad \text{a.e. in }\Rd\,.
\end{equation}
Moreover, a stationary
Markov control $v$ is optimal for the ergodic control problem if
and only if it is an a.e.\ measurable selector from the minimizer \cref{ET5.1B}.
\end{enumerate}
\end{theorem}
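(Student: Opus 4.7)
The plan is to reduce both parts of the theorem to the general HJB theory for controlled jump diffusions developed in \cite{APZ19a}, after verifying that the hypothesis $\Exp[(d_1)^{m+1}]<\infty$ translates into the moment condition on the L\'evy measure $\nu_L$ that is needed there. Since $\nu_L(\D{y}) = \beta F^{d_1}(\vartheta\lambda^{-1}\D{y})$ is supported on the ray $\RR_+\lambda$, the change of variables immediately gives $\int_{\Rd}\abs{y}^{m+1}\,\nu_L(\D{y})<\infty$, which is what is required to make $\Ag\varphi$ well-defined and to control the jump term $\int(\varphi(x+y)-\varphi(x))\nu_L(\D y)$ for any $\varphi\in\Cc^2(\Rd)$ of polynomial growth of degree at most $m$.

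For part~(i), I would follow the standard Dirichlet-problem approximation. First, for each $R>0$, solve
\begin{equation*}
\min_{u\in\Act}\bigl[\Ag V^R_\alpha(x,u) + \rc(x,u)\bigr] \,=\, \alpha V^R_\alpha(x)\quad\text{in }B_R,\qquad V^R_\alpha=0\text{ on }\partial B_R,
\end{equation*}
using the fact that the principal symbol $\tfrac12\Sigma\Sigma\transp$ is diagonal and uniformly positive definite, so interior $\Cc^{2,r}$ regularity from elliptic theory applies once the non-local term is treated as a bounded perturbation. Next, construct a global polynomial Lyapunov function of the form $\Lyap(x)=1+\abs{x}^m$ and exploit the piecewise-linear drift \cref{ET3.1B} together with \cref{P4.1} to get $\Ag\Lyap\le C_0-C_1\abs{x}^m$ under any constant control with $\varGamma v\ne0$; this yields the stochastic upper bound $V_\alpha^R(x)\le\Exp_x^v[\int_0^\infty \E^{-\alpha t}\rc(X_t,v)\,\D t]\le C(1+\abs{x}^m)$ uniformly in $R$. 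Letting $R\to\infty$, a diagonal/Arzel\`a--Ascoli argument produces the minimal nonnegative $\Cc^{2,r}$ solution, and polynomial growth of degree $m$ is inherited from this bound. Optimality of measurable selectors is then the standard verification: apply the It\^o formula for jump diffusions to $\E^{-\alpha t}V_\alpha(X_t)$, use \cref{ET5.1A} to get the inequality $V_\alpha(x)\le J_\alpha(x,U)$ with equality along any selector of the minimizer, and send $T\to\infty$ using the polynomial growth of $V_\alpha$ together with moment bounds from \cref{P4.1}.

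For part~(ii), I would perform the vanishing-discount procedure. Set $\Bar V_\alpha(x)\df V_\alpha(x)-V_\alpha(0)$, and use the exponential ergodicity of \cref{P4.1} together with the polynomial Foster--Lyapunov inequality above to prove two uniform estimates as $\alpha\searrow 0$: $\alpha V_\alpha(0)$ stays bounded, and $\{\Bar V_\alpha\}$ is equicontinuous on compacts with polynomial growth. Elliptic/jump regularity then gives a subsequential limit $V\in\Cc^{2,r}(\Rd)$ and a constant $\varrho_*=\lim\alpha V_\alpha(0)$ satisfying \cref{ET5.1B}. The characterization of optimal stationary Markov controls follows from an ergodic verification: for $v\in\Ussm$ with $\varGamma v\ne0$,
\begin{equation*}
V(x)\,\le\,\Exp_x^{U}\!\!\left[\int_0^T\bigl(\rc(X_s,U_s)-\varrho_*\bigr)\D s\right]+\Exp_x^{U}[V(X_T)],
\end{equation*}
with equality along selectors of the minimizer; dividing by $T$ and invoking ergodicity of the stationary measure delivers the ``only if'' direction, while the ``if'' direction is the standard consequence of the unique ergodicity in \cref{P4.1}.

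The main obstacle I anticipate is handling the non-local integro-differential term uniformly in the approximation: one must ensure that the jump term in the truncated problem does not blow up when evaluated at $x$ near $\partial B_R$, that Itô's formula applies to $V_\alpha$ despite only $\Cc^{2,r}$ regularity, and that the tails of the compound Poisson jumps are dominated so as to preserve polynomial growth of $V_\alpha$ and $V$. This is exactly where the $(m+1)$-th moment hypothesis on $d_1$ enters, and where the machinery of \cite{APZ19a} is tailored; invoking that reference cleanly bypasses these technicalities.
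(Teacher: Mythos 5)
Your plan — reduce both parts to the general HJB theory for controlled jump diffusions in \cite{APZ19a} after checking that $\Exp[(d_1)^{m+1}]<\infty$ controls the jump term — is precisely what the paper does. The paper's proof is, however, considerably more economical than your sketch: rather than re-deriving the Dirichlet-approximation and vanishing-discount machinery (which lives inside \cite{APZ19a}), it simply invokes Remark~5.1 of \cite{APS19} together with \cref{P4.1} to verify Assumptions~2.1 and 2.2 of \cite{APZ19a} with Lyapunov functions $\sV_\circ,\sV$ of polynomial growth of degree $m$, observes that the $(m+1)$-st moment bound on $d_1$ yields Assumption~5.1 there, and then applies Theorems~5.1--5.3 of \cite{APZ19a} directly.
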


\begin{proof}
We first consider (i).
It follows by Remark~5.1 in \cite{APS19} and \cref{P4.1} that 
Assumptions~2.1 and 2.2 in \cite{APZ19a} hold with $\sV_{\circ}$ and $\sV$ 
having at most polynomial growth of degree $m$.
Since $\Exp[(d_1)^{m+1}] < \infty$, 
then $\cref{ES4.1A}$ satisfies Assumption~5.1 in \cite{APZ19a}.
Therefore, the results in part (i) follow by Theorems~5.1 and 5.3 in \cite{APZ19a}.
Note that by (5.4) in \cite{APZ19a}, $V_\alpha$ has at most polynomial growth of 
degree $m$.
Similarly, the claim in part (ii) follows by Theorems~5.2 and~5.3 of \cite{APZ19a}.
\end{proof}


If we consider \cref{ES3.2C} over all stable Markov controls,
then the ergodic control problem is equivalent to 
$\min_{\uppi\in\eom}\int_{\Rd\times\Act}\rc(x,u)\,\uppi(\D{x},\D{u})$,
see, for example, \cite[Section 4]{APZ19a}.
We summarize a result on $\epsilon$-optimal controls for the ergodic 
problem in the next theorem, which follows directly by Corollary~7.1 in \cite{APZ19a}.
Note that the constant control $v \equiv e_d$ also satisfies \cref{P4.1}.
Recall that a stationary Markov control $v$ is called precise if
it is a measurable map from $\Rd$ to $\Act$.

\begin{theorem}\label{T5.2}
Assume that $\Exp[(d_1)^{m}] < \infty$, with $m$ as in \cref{ES3.2A}. 
For any $\epsilon>0$, there exist a continuous precise control $v_\epsilon\in\Ussm$,
and $R \equiv R(\epsilon)\in\NN$ such that $v_\epsilon \equiv e_d$ on $\Bar{B}^c_R$,
and $v_\epsilon$ is $\epsilon$-optimal, that is,
\begin{equation*}
\int_{\Rd\times\Act}\rc(x,u)\,\uppi_{v_\epsilon}(\D{x},\D{u})
\,\le\, \varrho_* + \epsilon\,.
\end{equation*}
\end{theorem}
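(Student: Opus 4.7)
The plan is to verify that the hypotheses of \cite[Corollary~7.1]{APZ19a} hold for our limiting controlled jump diffusion, and then invoke that result directly. The three ingredients one needs are: \textup{(a)} existence of an optimal (or $\nicefrac{\epsilon}{3}$-optimal) stationary Markov control, \textup{(b)} exponential ergodicity of the process under the ``anchor'' constant control $e_d$, and \textup{(c)} polynomial moment bounds on invariant measures, uniformly over a family of controls equal to $e_d$ outside a fixed ball.

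Ingredient \textup{(a)} is supplied by \cref{T5.1}\textup{(ii)}, which yields a measurable selector $v_*$ achieving the ergodic value $\varrho_*$. Ingredient \textup{(b)} follows from \cref{P4.1}: because $\inf_{n}\gamma_d^n>0$ forces $\gamma_d>0$, we have $\varGamma e_d=\gamma_d e_d\ne 0$, hence the process under $e_d$ is exponentially ergodic with some polynomial Lyapunov function $\sV$ of degree $m$. Ingredient \textup{(c)} is where the moment hypothesis $\Exp[(d_1)^{m}]<\infty$ enters: it gives $\int_{\Rd}\abs{y}^{m}\,\nu_{L}(\D{y})<\infty$, so the jump contribution in \cref{ES4.1A} applied to $\sV$ is bounded by a constant plus lower-order terms, and a standard Foster--Lyapunov inequality $\Ag\sV\le C_0-C_1\abs{x}^m$ holds uniformly for controls coinciding with $e_d$ outside a large ball.

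Granted these ingredients, the construction of $v_\epsilon$ proceeds by spatial truncation. First, choose $R_0$ so large that $\int_{B_{R_0}^c}\rc\,\D{\uppi_{v_*}}\le\nicefrac{\epsilon}{3}$, which is possible by \textup{(c)} applied to $v_*$ (or, if $v_*$ is not known to yield an $m$-integrable invariant measure, by first approximating $v_*$ by a stable control with that property as in \cite{APZ19a}). Next, mollify $v_*$ on a neighbourhood of $\Bar B_{R_0}$ using the convexity of the simplex $\cS$ and a smooth partition of unity to obtain a continuous map $\tilde v\colon \Bar B_{R_0+1}\to\cS$; finally, glue $\tilde v$ to $e_d$ via a smooth radial cutoff, setting
\begin{equation*}
v_\epsilon(x)\,\df\,\chi(x)\tilde v(x)+\bigl(1-\chi(x)\bigr)e_d\,,
\end{equation*}
where $\chi\in\Cc^{\infty}(\Rd,[0,1])$ equals $1$ on $B_{R_0}$ and $0$ on $B_{R}^c$ for some $R=R(\epsilon)>R_0+1$. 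Convexity of $\cS$ ensures $v_\epsilon(x)\in\cS$, and $v_\epsilon\equiv e_d$ on $\Bar B_R^c$. The resulting drift is locally Lipschitz, so the remark following \cref{P4.1} gives $v_\epsilon\in\Ussm$.

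The main obstacle is the continuity-in-control step: one must show that, along a sequence of continuous approximations $v^k\to v_*$, the ergodic occupation measures $\uppi_{v^k}$ converge weakly and that $\int\rc\,\D{\uppi_{v^k}}\to\int\rc\,\D{\uppi_{v_*}}$, so that the error introduced by mollification on $B_{R_0+1}$ and by the gluing in the annulus $B_R\setminus B_{R_0}$ can each be made at most $\nicefrac{\epsilon}{3}$. Tightness of $\{\uppi_{v^k}\}$ and uniform integrability of $\rc$ follow from the uniform Foster--Lyapunov bound in \textup{(c)}; weak convergence is then obtained by standard generator-convergence arguments for jump diffusions, all of which are packaged in \cite[Corollary~7.1]{APZ19a}. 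Summing the three $\nicefrac{\epsilon}{3}$ contributions gives $\int\rc\,\D{\uppi_{v_\epsilon}}\le\varrho_*+\epsilon$, completing the proof.
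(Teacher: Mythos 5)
Your proposal is correct and takes essentially the same route as the paper: the paper's proof is the single observation that the statement ``follows directly by Corollary~7.1 in \cite{APZ19a}'', together with noting that the constant control $e_d$ satisfies \cref{P4.1} (since $\gamma_d>0$ implies $\varGamma e_d\ne 0$). Your more detailed verification of the hypotheses of that corollary and your sketch of the truncation/mollification mechanism are simply an unpacking of what \cite[Corollary~7.1]{APZ19a} already packages.
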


\subsection{Proof of \texorpdfstring{\cref{T3.2}}{}}\label{S5.2}
To prove \cref{T3.2},
we use the approach developed in \cite{AMR04}.
We first establish a key moment estimate for the diffusion-scaled process $\Hat{X}^n$,
whose proof is similar to that of \cite[Lemma 3]{AMR04}.

\begin{lemma}\label{L5.1}
Grant the hypotheses in \cref{T3.2}.
Then 
\begin{equation}\label{EL5.1A}
\Exp\bigl[\norm{\Hat{X}^n(t)}^{m_{A}}\bigr] \,\le\, 
c_1(1 + t^{m_1})(1 + \norm{x}^{m_1})\quad \forall\,t\ge0\,,
\end{equation}
where  
$c_1$ and $m_1$ are some positive constants independent of $n$, $x$ and $t$. 
\end{lemma}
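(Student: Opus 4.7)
The plan is to follow the approach of \cite[Lemma~3]{AMR04}, decomposing $\Hat{X}^n$ into a ``free'' part and a drift part via the representation
\begin{equation*}
\Hat{X}^n_i(t) \,=\, F^n_i(t) - \mu^n_i\int_0^t \Hat{Z}^n_i(s)\Psi^n(s)\,\D s - \gamma^n_i\int_0^t \Hat{Q}^n_i(s)\,\D s\,,
\end{equation*}
where $F^n_i(t) \df \Hat{X}^n_i(0) + \ell^n_it + \Hat{W}^n_i(t) + \Hat{L}^n_i(t)$ gathers the deterministic drift, the initial state, the centered noise, and the jumps from the downtime process. The first step is to bound $\Exp\bigl[\sup_{s\le t}\abs{F^n_i(s)}^{m_A}\bigr]$ by a polynomial in $t$ and $\norm{x}$. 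The initial condition is handled by the hypothesis $\Hat{X}^n(0)\to x$, and $\ell^n_i$ is bounded by \cref{A2.1}. For the centered noise $\Hat{W}^n_i = \Hat{A}^n_i - \Hat{S}^n_i - \Hat{R}^n_i$, the term $\Hat{A}^n_i$ is controlled by a Burkholder-type bound for compensated renewal processes, valid because $\Exp[G_i^{m_A}]<\infty$ by \cref{A3.1}; the compensated Poisson integrals $\Hat{S}^n_i$ and $\Hat{R}^n_i$ are bounded by the Burkholder--Davis--Gundy inequality, using the a.s.\ estimates $Z^n_i\le n$ and $Q^n_i\le X^n_i$ to dominate their predictable quadratic variations by terms of order $t$ plus an integral of $\abs{\Hat{X}^n}$. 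Finally, $\Hat{L}^n_i(t) = \sqrt{n}\mu^n_i\rho_iC^n_{\mathsf{d}}(t)$ is estimated pathwise by $(\sqrt{n}/\vartheta^n)\sum_{k=1}^{N^n(t)+1}d_k$, and a Wald-type inequality combined with $\Exp[d_1^{m_A\vee(m+1)}]<\infty$ and the fact that $N^n$ is a renewal counting process with inter-renewal means bounded away from $0$ and $\infty$ yields $\Exp\bigl[\sup_{s\le t}\Hat{L}^n_i(s)^{m_A}\bigr]\le C(1+t^{m_A})$.

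Second, the two drift integrals are bounded in terms of $\Hat{X}^n$ itself using the crucial constraint \cref{ES2.1D}, which implies $\sum_{j}\Hat{Q}^n_j(t) = \bigl(\sum_j\Hat{X}^n_j(t)\bigr)^+ \le \norm{\Hat{X}^n(t)}$. Consequently $\Hat{Q}^n_i\le \norm{\Hat{X}^n}$, and writing $\Hat{Z}^n_i = \Hat{X}^n_i - \Hat{Q}^n_i$ gives $\abs{\Hat{Z}^n_i}\le 2\norm{\Hat{X}^n}$. Combining this with H\"older's inequality produces the integral inequality
\begin{equation*}
\Exp\bigl[\norm{\Hat{X}^n(t)}^{m_A}\bigr] \,\le\, A(t) + C\,t^{m_A-1}\int_0^t\Exp\bigl[\norm{\Hat{X}^n(s)}^{m_A}\bigr]\,\D s\,,
\end{equation*}
with $A(t)$ of the form $C_1(1+\norm{x}^{m_A})(1+t^{m_A})$ from the previous step.

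The final step is to deduce from this integral inequality the desired bound, which is the main obstacle: a direct application of Gr\"onwall yields exponential growth in $t$, inconsistent with the polynomial estimate. To recover polynomial growth one exploits the mean-reverting structure: rewriting $-\mu^n_i\Hat{Z}^n_i = -\mu^n_i\Hat{X}^n_i + \mu^n_i\Hat{Q}^n_i$ gives $\Hat{X}^n_i$ an Ornstein--Uhlenbeck-type representation,
\begin{equation*}
\Hat{X}^n_i(t) \,=\, \E^{-\mu^n_i t}\Hat{X}^n_i(0) + \int_0^t \E^{-\mu^n_i(t-s)}\bigl(\ell^n_i\,\D s + \D\Hat{W}^n_i(s) + \D\Hat{L}^n_i(s) + (\mu^n_i\Psi^n(s)-\gamma^n_i)\Hat{Q}^n_i(s)\,\D s\bigr)\,,
\end{equation*}
so that the convolution kernel $\E^{-\mu^n_i(t-s)}$ keeps the contribution of each source bounded, and the $\Hat{Q}^n$-term, being dominated by $\norm{\Hat{X}^n}$, can be absorbed by iterating the bound on small time increments of length $\order(1/\mu^n_i)$. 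This localization turns the exponential Gr\"onwall estimate into a polynomial one, yielding \cref{EL5.1A} with a constant $m_1$ depending only on $m_A$ and the queueing parameters.
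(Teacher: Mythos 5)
The first two steps of your proposal are sound, but the final step contains a genuine gap. Substituting $\Hat{Z}^n_i = \Hat{X}^n_i - \Hat{Q}^n_i$ into the drift term $-\mu^n_i\int_0^t\Hat{Z}^n_i(s)\Psi^n(s)\,\D s$ yields $-\mu^n_i\int_0^t\Psi^n(s)\Hat{X}^n_i(s)\,\D s$, \emph{not} $-\mu^n_i\int_0^t\Hat{X}^n_i(s)\,\D s$. The variation-of-constants formula therefore produces the random kernel $\exp\bigl(-\mu^n_i\bigl[C^n_{\mathsf u}(t)-C^n_{\mathsf u}(s)\bigr]\bigr)$, not $\E^{-\mu^n_i(t-s)}$. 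Because $C^n_{\mathsf u}$ has random flat stretches during downtimes, this kernel does not decay uniformly in $t-s$; the naive comparison $\exp(-\mu^n_i[C^n_{\mathsf u}(t)-C^n_{\mathsf u}(s)])\le \E^{\mu^n_i C^n_{\mathsf d}(t)}\E^{-\mu^n_i(t-s)}$ would require exponential moments of $C^n_{\mathsf d}(t)$, which are unavailable under \cref{A3.1}. Consequently the localization argument you sketch does not close the Gr\"onwall loop with a polynomial bound.

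The paper's proof resolves exactly this difficulty by introducing the correction process $\Hat{\Phi}^n_i(\cdot)\df\mu^n_i\int_0^{\cdot}\Hat{Z}^n_i(s)\bigl(1-\Psi^n(s)\bigr)\,\D s$: adding and subtracting it replaces the interrupted drift by $-\mu^n_i\int_0^t\Hat{Z}^n_i(s)\,\D s$, restoring the deterministic kernel $\E^{-\mu^n_i(t-s)}$ so that the argument from \cite[Lemma~3]{AMR04} applies verbatim, at the cost of two extra ``free'' terms $\Hat{L}^n_i$ and $\Hat{\Phi}^n_i$. The term $\Hat{\Phi}^n_i$ is then controlled in $m_A$-th moments via the pathwise bound $\abs{n^{-\nicefrac12}\Hat{Z}^n_i(s)}\le C(1+n^{-1}A^n_i(s))$, the independence of $A^n$ and $\Psi^n$, and the moment estimate on $\sqrt{n}\,C^n_{\mathsf d}(t)$ coming from $\Exp[(d_1)^{m_A\vee(m+1)}]<\infty$. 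This correction term and its separate moment bound are the essential new ingredient in adapting \cite{AMR04} to the interruptions model, and they are absent from your proposal.
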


\begin{proof}
Recall $\Hat{L}^n$ and $\Hat{X}^n$ in \cref{E-HatX}, 
and $\Hat{W}^n$ in \cref{ES3.1A}.
Let $\Hat{\Phi}^n$ be a $d$-dimensional process defined by
$\Hat{\Phi}^n_i(\cdot) \df \mu^n_i\int_0^{\cdot}
\Hat{Z}^n_i(s)\bigl(1 - \Psi^n(s)\bigr)\,\D{s}$, for $i\in\sI$.
Then, 
\begin{equation*}
\mu^n_i\int_0^{t}\Hat{Z}^n_i(s)\Psi^n(s)\,\D{s}
\,=\, -\Hat{\Phi}^n_i(t) + \mu^{n}_i\int_0^{t}\Hat{Z}^n_i(s)\,\D{s}
\qquad\forall\,t\ge0\,.
\end{equation*}
Thus, we obtain
\begin{equation*}
\Hat{X}^n_i(t) \,=\, \Hat{X}^n_i(0) + \ell^n_it + \Hat{W}^n_i(t) 
+ \Hat{\Phi}_i^n(t) + \Hat{L}^n_i(t)
-\mu^n_i\int_0^{t}\Hat{Z}^n_i(s)\,\D{s} - \gamma^n_i\int_0^{t}\Hat{Q}^n_i(s)\,\D{s}
\end{equation*}
for all $t\ge0$ and $i\in\sI$.
Following the same method as in \cite[Lemma~3]{AMR04}, we have
\begin{equation}\label{PL5.1A}
\begin{aligned}
\norm{\Hat{X}^n(t)} &\,\le\,
C\biggl[ 1 + t^2 + \norm{\Hat{X}^n(0)} + \norm{\Hat{W}^n(t)
+ \Hat{L}^n(t) + \Hat{\Phi}^n(t)}  \\
&\mspace{50mu}
+ \int_0^t\norm{\Hat{W}^n(s) + \Hat{L}^n(s)+ \Hat{\Phi}^n(s)}\,\D{s} 
+ \int_0^t\int_0^s\norm{\Hat{W}^n(r) + \Hat{L}^n(r)
+ \Hat{\Phi}^n(r)}\,\D{r}\,\D{s}\biggr]
\end{aligned}
\end{equation}
for some positive constant $C$.
Let
\begin{equation*}
\widehat{N}^n(t) \,\df\, \max\Biggl\{k\ge0\colon \sum_{i=1}^{k}u_i^n\le t\Biggr\}
\end{equation*}
with $u^n$ as in \cref{ES2.1A}.
By \cref{A2.2}, $\widehat{N}^n(t)$ is a Poisson process with rate
$\beta^n_{\mathsf{u}}$.
Then, we obtain 
\begin{equation}\label{PL5.1B}
\Exp\left[ \norm{\Hat{L}^n(t)}^{m_{A}}\right] 
\,\le\, C_1 \Exp\bigl[\bigl(\sqrt{n}C^n_{\mathsf{d}}(t)\bigr)^{m_{A}}\bigr]\,\le\, 
C_1\biggl(\frac{\sqrt{n}}{\vartheta^n}\biggr)^{m_{A}}\Exp
\Biggl[\Biggl(\sum_{i=1}^{\widehat{N}^n(t)+1} d_i \Biggr)^{m_{A}}\Biggr] 
\,\le\, C_2(1 +  t^{m_2}) 
\end{equation}
for some positive constants $C_1 = \sup\{\mu^n_i\rho_i\colon n\in\NN,i\in\sI\}$, 
$C_2$, and $m_2$.
The third inequality in \cref{PL5.1B}
 follows by the independence of $\widehat{N}^n$ and $d_i$,
and \cref{A3.1}.
On the other hand, for some positive constant $C_3$, we have
\begin{equation}\label{PL5.1C}
\abs{n^{-\nicefrac{1}{2}}\Hat{Z}_i^n(s)} \,\le\, C_3\bigl( 1 + n^{-1}A^n_i(s)\bigr)
\quad\text{a.s.}
\quad \forall\,s\,\ge\,0\,.
\end{equation}
Thus,
\begin{equation}\label{PL5.1D}
\begin{aligned}
\Exp\Bigl[\babs{\Hat{\Phi}^n_i(t)}^{m_A}\Bigr] &\,\le\, 
\mu^n_i\Exp\biggl[\biggl(\int_0^{t}\babs{n^{-\nicefrac{1}{2}}\Hat{Z}^n_i(s)}
\babs{\sqrt{n}\bigl(1 - \Psi^n(s)\bigr)}\,\D{s}\biggr)^{m_A}\biggr] \\
&\,\le\, \mu^n_i(C_3)^{m_{A}}
\Bigl(1 + \sup_{s\le t}\Exp\bigl[n^{-1}A^n_i(s)\bigr]\Bigr)^{m_A} 
\Exp\Bigl[\bigl(\sqrt{n}C^n_{\mathsf{d}}(t)\bigr)^{m_A}\Bigr] \\
&\,\le\, C_4(1 + t^{m_3})
\end{aligned}
\end{equation}
for some positive constant $C_4$, 
where the second inequality follows by \cref{PL5.1C}
and the independence of $A^n$ and $\Psi^n$,
and the third inequality follows by \cite[Theorem~4]{Krichagina} and \cref{PL5.1B}.
Therefore, following the argument in the proof of \cite[Lemma~3]{AMR04}, 
and using \cref{PL5.1A,PL5.1B,PL5.1D}, 
we establish \cref{EL5.1A}. This completes the proof.
\end{proof}

\begin{proof}[Proof of \cref{T3.2}]
We first prove the lower bound:
\begin{equation*}
\liminf_{n\rightarrow\infty}\, \Hat{V}^n_{\alpha}\bigl(\Hat{X}^n(0)\bigr) 
\,\ge\, {V}_{\alpha}(x)\,.
\end{equation*}
By \cref{T5.1}, the partial derivatives of $V_{\alpha}(x)$ 
up to order two are locally H\"older continuous.
Let $V^{l}_\alpha \df \chi_l\comp V_\alpha= \chi_l(V_\alpha)$, where
$\chi_l\in\Cc^2(\RR)$ satisfies $\chi_l(x) = x$ for $x\le l$ and
$\chi_l(x) = l+1$ for $x\ge l+2$.
Let $\cL\colon \Cc^2(\Rd) \to \Cc^2(\Rd\times\cS)$ 
be the local operator defined by 
\begin{equation*}
\cL \varphi(x,u) \,\df\, \langle b(x,u),\grad \varphi(x)\rangle 
+ \frac{1}{2}\sum_{i\in\sI}\lambda_i (1+ c^2_{a,i})\,\partial_{ii}\varphi(x)\,,
\qquad\varphi\in \Cc^2(\Rd)\,.
\end{equation*}
Compare this to \cref{ES4.1A}.
We define $\sH(x,p) \df \min_{u\in\Act}[\langle b(x,u),p \rangle + \rc(x,u)]$,
for $(x,p)\in\Rd\times\Rd$.
By It\^{o}'s formula,
for any $l > \sup_{B_R} V_\alpha$,
it follows that 
\begin{equation*}
\begin{aligned}
&\E^{-\alpha(t\wedge\uptau_R)}V^l_\alpha({X}_{t\wedge\uptau_R}) 
\,=\, V^l_\alpha(x) 
- \int_0^{t\wedge\uptau_R}\alpha\E^{-\alpha s}\,V_\alpha({X}_s)\,\D{s}
+ \int_0^{t\wedge\uptau_R}\E^{-\alpha s}\,
\cL V_\alpha({X}_s,{U}_s)\,\D{s} \\
&\mspace{50mu}
+ \int_0^{t\wedge\uptau_R}
\langle \E^{-\alpha s}\,\grad V_\alpha({X}_s), \Sigma\,\D{{W}_s}\rangle
+ \int_0^{t\wedge\uptau_R}\int_{\RR_*}\E^{-\alpha s}\,
\bigl(V_\alpha^l({X}_{s-}+ \lambda y) - V_\alpha({X}_{s-})\bigr)
\cN_{{L}}(\D{s},\D{y})\,,
\end{aligned}
\end{equation*}
where
$\cN_{{L}}$ is the Poisson random measure of $\{L_t\colon t\ge0\}$ 
with the intensity $\Pi_L$.
Thus, applying \cref{ET5.1A}, we obtain 
\begin{align*}
\begin{aligned}
\E^{-\alpha(t\wedge\uptau_R)}V^l_\alpha({X}_{t\wedge\uptau_R}) \,=&\, V^l_\alpha(x) 
+ \int_0^{t\wedge\uptau_R}\E^{-\alpha s}\,\langle b({X}_s,{U}_s),
\grad V_\alpha({X}_s)\rangle\,\D{s} \\
&\,+ \int_0^{t\wedge\uptau_R}\langle \E^{-\alpha s}\,\grad V_\alpha({X}_s),
\Sigma\,\D{{W}_s}\rangle 
- \int_0^{t\wedge\uptau_R}\E^{-\alpha s}\,
\sH\bigl({X}_s,\grad V_\alpha({X}_s)\bigr)\,\D{s}
\\
&\,+ \int_0^{t\wedge\uptau_R}\int_{\RR_*}
\E^{-\alpha s}\,\bigl(V^l_\alpha({X}_{s-}+ \lambda y) - 
V_\alpha({X}_{s-})\bigr) \widetilde{\cN}_{{L}}(\D{s},\D{y}) \\
&\,+ \int_0^{t\wedge\uptau_R}\int_{\RR_*}
\E^{-\alpha s}\,
\bigl(V^l_\alpha({X}_{s-}+ \lambda y) - V_\alpha({X}_{s-}+ \lambda y)\bigr)
\Pi_{L}(\D{s},\D{y})\,,
\end{aligned}
\end{align*}
where $\widetilde{\cN}_{{L}}(t,A) = \cN_{{L}}(t,A) 
- t\,\Pi_{{L}}(A)$ for any Borel set $A\subset\RR$.
Repeating the same calculation as for the claim (71) in \cite{AMR04},
we obtain
\begin{equation}\label{PT3.2B}
\begin{aligned}
\E^{-\alpha (t\wedge\uptau_R)}V^l_\alpha({X}_t) \,\ge\, V^l_\alpha(x) &+
\int_0^{t\wedge\uptau_R}\langle 
\E^{-\alpha s}\,\grad V^l_\alpha({X}_s), \Sigma\,\D{{W}_s}\rangle 
- \int_0^{t\wedge\uptau_R}\E^{-\alpha s}\,\rc({X}_s,{U}_s)\,\D{s} \\
&+ \int_0^{t\wedge\uptau_R}\int_{\RR_*}
\E^{-\alpha s}\,\bigl(V^l_\alpha({X}_{s-}+ \lambda y) - 
V_\alpha({X}_{s-})\bigr) \widetilde{\cN}_{{L}}(\D{s},\D{y}) \\
&+ \int_0^{t\wedge\uptau_R}\int_{\RR_*}
\E^{-\alpha s}\,\bigl(V^l_\alpha({X}_{s-}+ \lambda y) 
- V_\alpha({X}_{s-}+ \lambda y)\bigr)
\Pi_{L}(\D{s},\D{y})\,.
\end{aligned}
\end{equation}
Note that $\widetilde{\cN}_{{L}}$ is a martingale measure
and $V_\alpha$ is nonnegative.
Taking expectations on the both sides of \cref{PT3.2B}, 
the second and fourth terms
on the right-hand side  of \cref{PT3.2B} vanish.
Thus,
first taking limits as $l\rightarrow\infty$, and then as $R\rightarrow\infty$,
it follows by the monotone convergence theorem that
\begin{equation*}
\Exp\biggl[\int_0^t\E^{-\alpha s}\,\rc(X_s,U_s)\,\D{s}\biggr]\,\ge\,
V_\alpha(x) - \Exp\bigl[\E^{-\alpha t}V_\alpha({X}_t)\bigr] \,.
\end{equation*}
Applying \cref{T5.1} 
it follows that solutions of \cref{ET5.1A} 
have at most polynomial growth of degree $m$,
which corresponds to \cite[Proposition~5\,(i)]{AMR04}.
Note that \cref{L5.1} corresponds to Lemma~3 in \cite{AMR04}.
The rest of the proof of the lower bound 
follows exactly the proof of \cite[Theorem~4\,(i)]{AMR04}.

To prove \cref{ET3.2A}, we construct 
a sequence of asymptotically optimal scheduling policies ${U}^n$.
Let ${v}_\alpha$ be an optimal control to \cref{ET5.1A}.
Recall the quantization function in \cref{D4.3}.
We define a sequence of scheduling policies
\begin{equation*}
\Bar{z}^n[{v}_\alpha](\Hat{x}) \,\df\, \begin{cases}
\varpi\bigl(\langle e,\Hat{x}\rangle^+v_\alpha(\Hat{x}) \bigr)\,, 
\quad &\text{if } \Hat{x}\in\Hat{\fX}^n\,, \\
\Check{z}^n(\sqrt{n}\Hat{x}+n\rho) \quad &\text{if } \Hat{x} \notin\Hat{\fX}^n\,,
\end{cases}
\end{equation*}
where $\Check{z}^n$ is the modified priority policy in \cref{D4.1},
and
\begin{equation*}
\Hat{\fX}^n \,\df\, \bigl\{n^{\nicefrac{-1}{2}}(x - n\rho) \colon x\in\Rd,\;
\langle e,x \rangle \le x_i~\forall\, i\in\sI \bigr\}\,.
\end{equation*}
Here the policy on $(\Hat{\fX}^n)^c$ may be chosen arbitrarily. 
Let $U^n[v_\alpha]$ be the equivalent parameterization of $\Bar{z}^n[v_\alpha]$.
Following the  proof of \cite[Theorem~2\,(i)]{AMR04},
we obtain
\begin{equation*}
\int_0^{\cdot}\E^{-\alpha s}\, \Upsilon^n(s)\,\D{s} \,\Rightarrow\, 0\,,
\end{equation*}
where 
\begin{equation*}
\Upsilon^n(s) \,\df\,
\bigl\langle b\bigl(\Hat{X}^n(s),{U}^n[v_\alpha](s)\bigr),
\grad V_\alpha\bigl(\Hat{X}^n(s)\bigr)\bigr\rangle
+ \rc\bigl(\Hat{X}^n(s),U^n[v_\alpha](s)\bigr) 
- \sH\bigl(\Hat{X}^n(s),\grad V_\alpha\bigl(\Hat{X}^n(s)\bigl)\bigr)\,.
\end{equation*}
Thus, by using the method  
in \cite[Theorem~4\,(ii)]{AMR04}, and repeating the above calculation,
we obtain 
\begin{equation*}
\limsup_{n\rightarrow\infty}\, \Hat{V}^n_{\alpha}\bigl(\Hat{X}^n(0)\bigr) 
\,\le\, {V}_{\alpha}(x)\,.
\end{equation*}
This completes the proof.
\end{proof}

\subsection{Proof of \texorpdfstring{\cref{T3.3}}{}}\label{S5.3}
In this section, we prove \cref{T3.3} by establishing lower and upper bounds.

\subsubsection{The lower bound}
We show that
\begin{equation}\label{ET3.3A}
\liminf_{n\rightarrow\infty}\,
\varrho^n\bigl(\Hat{X}^n(0)\bigr) \,\ge\, \varrho_*\,.
\end{equation}
The proof is given at the end of this subsection.

We need the following lemma whose proof is similar to that of \cref{T4.1},
and is given in \cref{AppB}.

\begin{lemma}\label{L5.2}
Grant the hypotheses in \cref{A2.1,A2.2,A3.2}.
For any $m > 1$, and 
any sequence $\{z^n\in\fZsm^n\colon n\in\NN\}$ with 
$\sup_n \Hat{J}(\Hat{X}^n(0),{z}^n)<\infty$,
there exists $n_\circ >0$ such that
\begin{equation}\label{EL5.2A}
\sup_{n>n_{\circ}}\,\limsup_{T\rightarrow\infty}\,\frac{1}{T}\Exp^{z^n}
\left[\int_0^T\abs{\Hat{X}^n(s)}^m\,\D{s}\right] \,<\, \infty\,.
\end{equation} 
\end{lemma}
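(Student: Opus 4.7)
\textbf{Proof plan for \cref{L5.2}.}

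The hypothesis $\sup_n \Hat{J}(\Hat{X}^n(0),{z}^n)<\infty$ together with the lower bound $\widetilde{\rc}(x) \ge c\abs{x}^m$ in \cref{ES3.2A} immediately yields
\begin{equation*}
\sup_{n}\,\limsup_{T\to\infty}\,\frac{1}{T}\,\Exp^{z^n}\!\int_0^T\abs{\Hat{Q}^n(s)}^{m}\,\D{s} \,<\,\infty\,,
\end{equation*}
so the queue is controlled in long-run average. The strategy is to upgrade this to a bound on $\Hat{X}^n$ by a Foster--Lyapunov argument paralleling that of \cref{T4.1}, but now exploiting the fact that under \emph{any} admissible Markov policy the drift of $\Hat{X}^n_i$ contains the self-stabilizing term $-\mu^n_i \Hat{X}^n_i$. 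Indeed, rewriting $\mu^n_i \Hat{Z}^n_i = \mu^n_i(\Hat{X}^n_i - \Hat{Q}^n_i)$, the drift in \cref{E-HatX} decomposes (in the `up' state) as $\ell^n_i - \mu^n_i \Hat{X}^n_i + (\mu^n_i - \gamma^n_i)\Hat{Q}^n_i$, where the first two terms are policy-independent and the third is proportional to the queue.

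I would use the Lyapunov function $\widetilde{\Lyap}^n_{\upkappa,\xi}$ of \cref{ES4.3F} (or a natural variant without the saturation in $\Tilde{\Lyap}^n_{\upkappa,i}$, since under the cost bound we no longer need the modified priority structure to handle $\sI_0$) for an even integer $\upkappa \ge m$. Applying the generator $\widetilde{\Lg}^{z^n}_n$ and employing the identities \cref{ES4.2A,ES4.2D} exactly as in the computation of $\widetilde{\Lg}^{\Check{z}^n}_n \widetilde{\Lyap}^n_{\upkappa,\xi}$ in \cref{AppB}, the renewal-age and residual-life correction terms are designed to convert the non-Markov components into drift contributions with the right coefficients. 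The scheduling enters only through the term $(\mu^n_i - \gamma^n_i)\Hat{Q}^n_i$, and Young's inequality $\abs{x_i}^{\upkappa-1}\abs{q_i} \le \epsilon\abs{x_i}^{\upkappa} + C_\epsilon \abs{q_i}^{\upkappa}$ with $\epsilon$ small absorbs the $\abs{x}^\upkappa$ piece into the stabilizing $-\upkappa\mu^n_i \xi_i \abs{x_i}^{\upkappa}$ contribution. The compound Poisson jump term from $\cI_{n,\psi}$ is handled exactly as in \cref{PT4.1B} using \cref{A3.2}. This yields, for all sufficiently large $n$,
\begin{equation*}
\widetilde{\Lg}^{z^n}_n \widetilde{\Lyap}^n_{\upkappa,\xi}(\Tilde{x},h,\psi,k) \,\le\, C_0 \,-\, C_1\,\Lyap_{\upkappa,\xi}(\Tilde{x}) \,+\, C_2\,\babs{n^{-\nicefrac{1}{2}}q^n(\Breve{x},z^n)}^{\upkappa}\,.
\end{equation*}

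Integrating via Dynkin's formula, using the equivalence $c_0(\Lyap_{\upkappa,\xi} - 1)\le \widetilde{\Lyap}^n_{\upkappa,\xi}\le c_1(1+\Lyap_{\upkappa,\xi})$ from the proof of \cref{T4.1}, dividing by $T$, and letting $T\to\infty$, the queue term is bounded by the cost assumption (for $\upkappa = m$, or interpolation for non-integer $m$), giving
\begin{equation*}
\sup_{n>n_\circ}\,\limsup_{T\to\infty}\,\frac{1}{T}\,\Exp^{z^n}\!\int_0^T \babs{\widetilde{X}^n(s)}^{\upkappa}\,\D{s} \,<\,\infty\,.
\end{equation*}
Finally, the transfer from $\widetilde{X}^n$ to $\Hat{X}^n$ is accomplished exactly as in \crefrange{PT4.1D}{PT4.1G}: since $\sqrt{n}\cR^n$ has uniformly bounded conditional moments (by \cref{A2.2,EA3.2A}), while $\Prob(\Psi^n(s)=0)$ is of order $n^{-\nicefrac{1}{2}}$, the contribution of $\Hat{X}^n - \widetilde{X}^n$ to the time-average vanishes as $n\to\infty$.

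The main obstacle I anticipate is verifying the Foster--Lyapunov inequality without the priority structure of \cref{D4.1}: for classes $i\in\sI_0$ the coefficient $\mu^n_i - \gamma^n_i = \mu^n_i > 0$ makes the queue a \emph{positive} perturbation, so the Young's-inequality reduction must be balanced carefully against the $-\upkappa\mu^n_i\xi_i\abs{x_i}^{\upkappa}$ term, and the boundary contributions from the indicator derivatives of $\widetilde{\Lyap}^n_{\upkappa,\xi}$ across $\cK_n$ must be controlled uniformly in $n$. A correct choice of the weight vector $\xi$ and of $\epsilon$ in Young's inequality, both uniform in $n$ for $n$ large, closes the argument.
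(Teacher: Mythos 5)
Your proposal is correct and follows essentially the same route as the paper's proof: a Foster--Lyapunov drift inequality for the augmented Markov process $\widetilde{\Xi}^n$ using $\widetilde{\Lyap}^n_{\upkappa,\xi}$ (equivalently $\Tilde{f}_n = f_n + \Tilde{g}_n$ in the unscaled variables), Young's inequality to isolate the policy-dependent queue contribution, the cost hypothesis $\sup_n\Hat{J}(\Hat{X}^n(0),z^n)<\infty$ together with the lower bound in \cref{ES3.2A} to control the long-run average of $\abs{\Hat{Q}^n}^m$, and the transfer from $\widetilde{X}^n$ back to $\Hat{X}^n$ via \cref{PT4.1G}. The only minor deviation is that you suggest dropping the saturation in $\Tilde{\Lyap}^n_{\upkappa,i}$ for $i\in\sI_0$, whereas the paper simply reuses \cref{LB.2} (which already holds for all $z^n\in\fZsm^n$) with the saturated $\Tilde{g}_n$ and sketches only the $\sI_0=\varnothing$ case; both are valid since the cost bound absorbs the queue term regardless.
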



The main challenge in the proof
lies in approximating the 
generator of the diffusion-scaled process with the
generator of the limiting jump diffusion.
Recall the extended generator $\cH^n$ of $(A^n,H^n)$ in \cref{E-sH}. 
We define the function $\phi^n[f]$ by 
\begin{equation}\label{E-phi}
\begin{aligned}
\phi^n[f](x,h) &\,\df\, f(x) + \sum_{j\in\sI}\Hat{\phi}^n_{1,j}[f](x,h) 
+ \sum_{j\in\sI}\frac{c^2_{a,j}-1}{2\sqrt{n}}\partial_j f(x)  \\
&\mspace{50mu} + \sum_{j\in\sI}\Hat{\phi}^n_{2,j}[f](x,h)
+ \sum_{j\in\sI}\frac{\kappa^n_j(h_j)}{n}\partial_{jj}f(x)
+ \sum_{j=1}^{d-1}\Hat{\phi}^n_{3,j}[f](x,h)
\end{aligned}
\end{equation}
for any $f\in\Cc^{\infty}_c(\RR^d)$, and $n\in\NN$, 
where 
\begin{equation*}
\Hat{\phi}^n_{1,j}[f](x,h) \,\df\, \frac{1}{j\,!}
\sum_{i_j\in\sI} \sum_{i_{j-1}\neq i_j}
\dotsb\sum_{i_1\notin\{i_l\colon l > 1\}}
\prod_{r=1}^{j}\eta^n_{i_r}(h_{i_r})\bigl[f\bigr]^{1,n}_{i_1\dotsb i_j}(x)\,,
\end{equation*}
with 
\begin{equation}\label{E-phi1}
\begin{aligned}
\bigl[f\bigr]^{1,n}_{i_1\dotsb i_j}(x) \,&\df\,
\bigl[f\bigr]^{1,n}_{i_1\dotsb i_{j-1}}(x + n^{-\nicefrac{1}{2}}e_{i_j})
- \bigl[f\bigr]^{1,n}_{i_1\dotsb i_{j-1}}(x)\,, \\
\bigl[f\bigr]^{1,n}_{i_1}(x) \,&\df\, f(x + n^{-\nicefrac{1}{2}}e_{i_1}) - f(x)\,.
\end{aligned}
\end{equation}
The function $\Hat{\phi}^n_{2,j}[f]$ is defined analogously to \cref{E-phi1} with 
$\bigl[f\bigr]^{1,n}_{i_1\dotsb i_{j}}$ and $\bigl[f\bigr]^{1,n}_{i_1}$ replaced by
$\bigl[f\bigr]^{2,n}_{i_1\dotsb i_{j}}$ and
\begin{equation*}
\bigl[f\bigr]^{2,n}_{i_1}(x) \,\df\, 
\sum_{j\in\sI}\frac{c^2_{a,j}-1}{2\sqrt{n}}
\bigl(\partial_j f(x + n^{-\nicefrac{1}{2}}e_{i_1}) 
- \partial_jf(x)\bigr) \,,
\end{equation*}
respectively.
Also,
\begin{equation*}
\Hat{\phi}^n_{3,j}[f](x,h) \,\df\,  \frac{1}{j\,!} 
\sum_{i_j\in\sI} \sum_{i_{j-1}\neq i_j}
\dotsb\sum_{i_1\notin\{i_l\colon l > 1\}}
\prod_{r=2}^{j+1}\eta^n_{i_r}(h_{i_r})\frac{\kappa^n_{i_1}(h_{i_1})}{n}
\bigl[f\bigr]^{3,n}_{i_1\dotsb i_{j+1}}(x)
\end{equation*}
with $\bigl[f\bigr]^{3,n}_{i_1\dotsb i_{j+1}}(x)$ defined analogously to \cref{E-phi1},
and
\begin{equation*}
[f]^{3,n}_{i_1i_2}(x) \,\df\, \partial_{i_1i_1}f(x + n^{-\nicefrac{1}{2}}e_{i_2}) 
- \partial_{i_1i_1}f(x)
\quad\text{for\ } i_1, i_2,\dotsc,i_j,\ j\in\sI\,.
\end{equation*}
Note that $\phi^n[f]$ is bounded by \cref{A3.2}\,(i).

The extended generator $\widetilde{\cH}^n$ of the scaled process 
$(\Hat{A}^n,H^n)$ is given by 
$\widetilde{\cH}^nf(\Tilde{x},h) =\cH^n f(\Tilde{x}^n(x),h)$, 
for $f\in\Cc_b(\Rd\times\RR_+^d)$.
We have the following lemma.

\begin{lemma}\label{L5.3}
Grant \cref{A2.1} and \cref{A3.2}\,\ttup{i}. Then, 
\begin{equation}\label{EL5.3A}
\begin{aligned}
\widetilde{\cH}^n\phi^n[f](\Tilde{x},h) &\,=\,  
\sum_{i\in\sI}\frac{\lambda^n_i}{\sqrt{n}} \partial_i f(\Tilde{x}) 
+ \sum_{i\in\sI}\frac{\lambda^n_i c^2_{a,i}}{2n} \partial_{ii}f(\Tilde{x}) \\
&\qquad+ \sum_{i\in\sI}\frac{\lambda^n_i}{n} 
\sum_{j\in\sI}\biggl(\eta^n_j(h_j) + \frac{c^2_{a,j} - 1}{2}\biggr)
\partial_{ij}f(\Tilde{x}) + \order\Bigl( \frac{1}{\sqrt{n}}\Bigr)
\end{aligned}
\end{equation}
for all $f\in\Cc^{\infty}_c(\RR^d)$ and $(\Tilde{x},h)\in\Rd\times\RR^d_+$.
\end{lemma}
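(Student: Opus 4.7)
The plan is to apply $\widetilde{\cH}^n$ term-by-term to the six types of summands in $\phi^n[f]$, and use the ODE identities \cref{ES4.2A,ES4.2B} together with the boundary values $\eta^n_i(0)=0$ and $\kappa^n_i(0)=0$ (immediate from \cref{E-eta,E-kappa}) to telescope all $r^n_i(h_i)$-dependent contributions into $\lambda^n_i$-contributions; the remaining pieces are then manifestly $\order(n^{-1/2})$ after Taylor expansion. Concretely, I organize $\phi^n[f]$ into three pairs, namely $\bigl(f,\sum_j\hat\phi^n_{1,j}[f]\bigr)$, $\bigl(\sum_j\tfrac{c^2_{a,j}-1}{2\sqrt n}\partial_j f,\sum_j\hat\phi^n_{2,j}[f]\bigr)$, and $\bigl(\sum_j\tfrac{\kappa^n_j(h_j)}{n}\partial_{jj}f,\sum_{j=1}^{d-1}\hat\phi^n_{3,j}[f]\bigr)$, and analyze each pair separately.

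For the first pair, a direct computation (using $\eta^n_k(0)=0$ to eliminate the $\eta^n_k$-factor after a jump at index $k$) gives
\[
\widetilde{\cH}^n\bigl(f+\hat\phi^n_{1,1}[f]\bigr)(\tilde x,h)=\sum_{i\in\sI}\lambda^n_i[f]^{1,n}_i(\tilde x)+\sum_{k,i\in\sI,\,k\ne i}r^n_k(h_k)\eta^n_i(h_i)[f]^{1,n}_{ki}(\tilde x),
\]
where the clean first sum is obtained by combining the $\dot\eta^n_i$ contribution from $\partial_{h_i}$ with the diagonal $r^n_i\eta^n_i$-contribution from the jump, via the rearrangement $\dot\eta^n_i+r^n_i-r^n_i\eta^n_i=\lambda^n_i$ of \cref{ES4.2A}. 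Adding $\hat\phi^n_{1,2}[f]$ has the analogous effect on the residual: it converts each surviving $r^n_k(h_k)$ into $\lambda^n_k$ while introducing a new residual indexed by one additional distinct coordinate. Iterating through $j=d$ closes the telescope, since no $(d{+}1)$-tuple of distinct indices in $\sI$ exists, and hence $\widetilde{\cH}^n\bigl(f+\sum_{j=1}^d\hat\phi^n_{1,j}[f]\bigr)(\tilde x,h)$ equals a (symmetric) sum of terms of the form $\lambda^n_k\,\prod_{i\in J\setminus\{k\}}\eta^n_i(h_i)\,[f]^{1,n}_J(\tilde x)$ indexed by non-empty $J\subseteq\sI$ with $k\in J$. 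Since each $[f]^{1,n}_J$ is an $\abs{J}$-fold forward difference of step $n^{-1/2}$ and $\lambda^n_i=O(n)$, Taylor expansion yields: the $\abs{J}=1$ contributions produce $\sum_i\tfrac{\lambda^n_i}{\sqrt n}\partial_if+\sum_i\tfrac{\lambda^n_i}{2n}\partial_{ii}f$, the $\abs{J}=2$ contributions produce $\sum_{i\ne j}\tfrac{\lambda^n_i\eta^n_j(h_j)}{n}\partial_{ij}f$, and all $\abs{J}\ge3$ contributions are $\order(n^{-1/2})$ pointwise.

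The second pair is treated identically after observing that $[f]^{2,n}_{i_1}=[g]^{1,n}_{i_1}$ for $g(x):=\sum_j\tfrac{c^2_{a,j}-1}{2\sqrt n}\partial_j f(x)$; the extra $n^{-1/2}$ prefactor in $g$ means only the $\abs{J}=1$ Taylor contribution survives and yields $\sum_{i,j\in\sI}\tfrac{\lambda^n_i(c^2_{a,j}-1)}{2n}\partial_{ij}f+\order(n^{-1/2})$. For the third pair, the identity \cref{ES4.2B} takes the place of \cref{ES4.2A}: direct computation of $\widetilde{\cH}^n\bigl(\sum_j\tfrac{\kappa^n_j(h_j)}{n}\partial_{jj}f\bigr)$, using $\kappa^n_j(0)=0$ together with $\dot\kappa^n_j-r^n_j\kappa^n_j=\bigl(\eta^n_j+\tfrac{c^2_{a,j}-1}{2}\bigr)\lambda^n_j$, produces the clean leading term $\sum_j\tfrac{\lambda^n_j}{n}\bigl(\eta^n_j(h_j)+\tfrac{c^2_{a,j}-1}{2}\bigr)\partial_{jj}f$ plus a residual $\sum_{k\ne j}\tfrac{r^n_k(h_k)\kappa^n_j(h_j)}{n}[\partial_{jj}f]^{1,n}_k$; successively adding $\hat\phi^n_{3,j}[f]$ for $j=1,\dots,d-1$ converts each $r^n_k\to\lambda^n_k$ via \cref{ES4.2A} exactly as in the first pair, and since every such residual already carries an extra $n^{-1/2}$ from the second-order difference $[\partial_{jj}f]^{1,n}_\cdot$, all of them are $\order(n^{-1/2})$. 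The $\hat\phi^n_{3,\cdot}$-recursion terminates at $j=d-1$ because $\hat\phi^n_{3,j}$ is supported on $(j{+}1)$-tuples of distinct indices.

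Summing the three contributions, the coefficient of $\partial_{ii}f$ works out to $\tfrac{\lambda^n_i}{n}\bigl[\tfrac12+\tfrac{c^2_{a,i}-1}{2}+\eta^n_i(h_i)+\tfrac{c^2_{a,i}-1}{2}\bigr]=\tfrac{\lambda^n_ic^2_{a,i}}{2n}+\tfrac{\lambda^n_i}{n}\bigl(\eta^n_i(h_i)+\tfrac{c^2_{a,i}-1}{2}\bigr)$, and for $i\ne j$ the coefficient of $\partial_{ij}f$ is $\tfrac{\lambda^n_i}{n}\bigl(\eta^n_j(h_j)+\tfrac{c^2_{a,j}-1}{2}\bigr)$, which together reproduce the right-hand side of \cref{EL5.3A}. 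The main obstacle will be the combinatorial bookkeeping of the telescoping recursion, in particular verifying that at each step the $\partial_h$ contribution, the jump contributions from $r^n_k$, and the boundary values $\eta^n_k(0)=\kappa^n_k(0)=0$ combine via \cref{ES4.2A} or \cref{ES4.2B} to generate exactly one additional distinct index and exactly one $r^n\to\lambda^n$ conversion, so that the procedure terminates at recursion depth $d$ (respectively $d-1$ for the $\kappa^n$-chain). Once this recursive structure is in hand, the pointwise bound $r^n_i(h_i)=O(n)$ (valid under \cref{A3.2}\,(i)) together with the $n^{-1/2}$-size of each forward difference renders all residual terms $\order(n^{-1/2})$ for each fixed $(\tilde x,h)$.
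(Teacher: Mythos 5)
Your proof is correct and follows essentially the same route as the paper's: apply $\widetilde{\cH}^n$ to $\phi^n[f]$, use the ODE identities \cref{ES4.2A,ES4.2B} together with the boundary values $\eta^n_i(0)=\kappa^n_i(0)=0$ to convert each hazard-rate factor $r^n_i$ into the constant $\lambda^n_i$ while pushing the $r^n$-dependence one index deeper, and Taylor-expand the resulting $\lambda^n$-terms. The paper presents this as two displayed computations (\cref{PL5.3A,PL5.3B}) followed by ``we repeat this procedure until all the terms $r^n_i$ are canceled''; you make the recursive structure fully explicit, in particular the fact that the $\eta$-chain terminates at depth $d$ because residuals involve strictly increasing numbers of distinct indices, which is the heart of why $\phi^n[f]$ is built up to $\Hat\phi^n_{1,d}$. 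Your grouping into three pairs and the observation $[f]^{2,n}_{i_1}=[g]^{1,n}_{i_1}$ with $g=\sum_j\tfrac{c^2_{a,j}-1}{2\sqrt n}\partial_jf$ is a clean way to package the second chain as a lower-order copy of the first. One small imprecision: in the closing sentence you invoke a ``pointwise bound $r^n_i(h_i)=O(n)$'' as an ingredient in the final size estimate, but after the telescoping is complete no $r^n_i$ factors remain --- they have all been replaced by $\lambda^n_i=\order(n)$ --- so that appeal is redundant (and, as stated, not actually a consequence of \cref{A3.2}\,(i) alone, since the hazard rate of $G_i$ need not be bounded); the correct point, which you do make earlier, is that the terminated recursion leaves only $\lambda^n$-weighted forward differences, for which the $\order(n^{-1/2})$ bound is immediate.
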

\begin{proof}
Note that 
\begin{equation*}
\begin{aligned}
\Hat{\phi}^n_{1,1}[f] &\,=\, \sum_{i\in\sI}\eta^n_i(h_i)
\bigl(f(\Tilde{x} + n^{-\nicefrac{1}{2}}e_{i}) - f(\Tilde{x}) \bigr)\,, \\
\Hat{\phi}^n_{2,1}[f] &\,=\, \sum_{i\in\sI}\eta^n_i(h_i)
\sum_{j\in\sI}\frac{c^2_{a,j}-1}{2\sqrt{n}}
\bigl(\partial_j f(\Tilde{x} + n^{-\nicefrac{1}{2}}e_{i_1}) 
- \partial_jf(\Tilde{x})\bigr)\,.
\end{aligned}
\end{equation*}
Using \cref{ES4.2A,ES4.2B}, and the Taylor expansion, we have 
\begin{equation}\label{PL5.3A}
\begin{aligned}
&\widehat{\cH}^n\Bigl(f + \Hat{\phi}^n_{1,1}[f]
+ \sum_{j\in\sI}\frac{c^2_{a,j}-1}{2\sqrt{n}}\partial_jf
+  \Hat{\phi}^n_{2,1}[f] 
+ \sum_{j\in\sI}\frac{\kappa^n_j(h_j)}{n}\partial_{jj}f\Bigr)(\Tilde{x},h) \\
&\quad \,=\, \sum_{i\in\sI}\frac{\lambda^n_i}{\sqrt{n}} \partial_i f(\Tilde{x}) 
+ \sum_{i\in\sI}\frac{\lambda^n_i c^2_{a,i}}{2n} \partial_{ii}f(\Tilde{x}) 
+ \sum_{i\in\sI}\frac{\lambda^n_i}{n}
\sum_{j\neq i}\frac{c^2_{a,j}-1}{2}\partial_{ij}f(\Tilde{x})
+ \order\biggl(\frac{1}{\sqrt{n}}\biggr) \\
&\mspace{50mu} + \sum_{i\in\sI}r^n_i(h_i)\sum_{j\neq i}\eta^n_j(h_j) 
\Bigl([f]^{1,n}_{ij}(\Tilde{x}) + [f]^{2,n}_{ij}(\Tilde{x})\Bigr) \\
&\mspace{100mu} + \sum_{i\in\sI}\frac{\lambda^n_i}{n} 
\biggl(\eta^n_i(h_i) + \frac{c^2_{a,i} - 1}{2}\biggr)
\partial_{ii}f(\Tilde{x}) + \sum_{i\in\sI}r^n_i(h_i)
\sum_{j\neq i}\frac{\kappa^n_j(h_j)}{n}[f]^{3,n}_{ij}(\Tilde{x})\,.
\end{aligned}
\end{equation}
It is straightforward to verify that
\begin{equation}\label{PL5.3B}
\begin{aligned}
&\widehat{\cH}^n \bigl(\Hat{\phi}^n_{1,2}[f] + \Hat{\phi}^n_{2,2}[f]
+  \Hat{\phi}^n_{3,1}[f] \bigr)(\Tilde{x},h) \\
&\quad \,=\, \sum_{i\in\sI}\bigl(\dot{\eta}^n_i(h_i) - \eta^n_i(h_i)r^n_i(h_i)\bigr)
\sum_{j\neq i}\eta^n_j(h_j)
\Bigl([f]^{1,n}_{ij}(\Tilde{x}) + [f]^{2,n}_{ij}(\Tilde{x})\Bigr) \\
&\quad +  \frac{1}{2}\sum_{i\in\sI}r^n_i(h_i)\sum_{j\neq i}\sum_{k\neq i,j} 
\eta^n_j(h_j)\eta^n_k(h_k)
\Bigl([f]^{1,n}_{ijk}(\Tilde{x}) + [f]^{2,n}_{ijk}(\Tilde{x})\Bigr) \\
&\quad + \sum_{i\in\sI}\biggl(
\bigl(\dot{\eta}^n_i(h_i) - \eta^n_i(h_i)r^n_i(h_i)\bigr)\sum_{j\neq i}
\frac{\kappa^n_j(h_j)}{n}
+  \bigl(\dot{\kappa}^n_i - r^n_i(h_i)
\kappa^n_i(h_i)\bigr)\sum_{j\neq i}\frac{\eta^n_j(h_j)}{n}\biggr)
[f]^{3,n}_{ij}(\Tilde{x}) \\
&\quad
+ \sum_{i\in\sI}r^n_i(h_i)\sum_{j\neq i}\eta^n_j(h_j)
\sum_{k\neq i,j}\frac{\kappa^n_{k}(h_k)}{n}[f]^{3,n}_{ijk}(\Tilde{x})
\end{aligned}
\end{equation}
for any $(\Tilde{x},h)\in\Rd\times\RR^d_+$.
Applying \cref{ES4.2A,ES4.2B}, and combining the first term on the right-hand side
of \cref{PL5.3B}
with the third, fifth and sixth terms on the right-hand side  of \cref{PL5.3A},
we obtain the third term on the right-hand side  of \cref{EL5.3A}.
We repeat this procedure until all the terms $r^n_i$ are canceled.
This proves \cref{EL5.3A}.
\end{proof}

\begin{definition}\label{D5.1}
We define the operator 
$\Hat{\sA}^n \colon \Cc^2(\Rd) \to \Cc^2(\Rd\times\cS)$ by
\begin{equation*}
\Hat{\sA}^nf(x,u) \,\df\, \sum_{i\in\sI}\Bigl(\sA^n_{1,i}(x,u)\partial_if(x) 
+ \frac{1}{2}\sA^n_{2,i}(x,u)\partial_{ii}f(x) \Bigl)\,,
\end{equation*}
where $\sA^n_{1,i},\sA^n_{2,i}\colon \Rd\times\cS \to \RR$, $i\in\sI$, 
are given by
\begin{equation*}
\begin{aligned}
\sA^n_{1,i}(x,u) &\,\df\, \ell^n_i - \mu^n_i(x_i - \langle e,x \rangle^+u_i) 
- \gamma^n_i\langle e,x \rangle^+u_i \,,\\
\sA^n_{2,i}(x,u) &\,\df\, 
\frac{\lambda^n_i}{n}c^2_{a,i} + 
\rho_i\mu^n_i + \frac{\mu_i^n(x_i - \langle e,x \rangle^+u_i) 
+ \gamma^n_i\langle e,x \rangle^+u_i}{\sqrt{n}}\,,
\end{aligned}
\end{equation*}
respectively. 
Define the operator $\Hat{\cI}^n$ by 
\begin{equation*}
\Hat{\cI}^n f(x) \,\df\, 
\int_{\Rd}\bigl(f(x+ y) - f(x)\bigr)\,\nu^n_{d_1}(\D y) \,,
\end{equation*}
where 
\begin{equation*}
\nu^n_{d_1}(A) \,\df\, 
\Pi^n_{d_1}
\Bigl(\bigl\{y\in\RR_*\colon \bigl(\tfrac{\sqrt{n}}{\vartheta^n}\mu^n_1\rho_1y,
\dotsc,\tfrac{\sqrt{n}}{\vartheta^n}\mu^n_d\rho_d y\bigr)\in A\bigr\}\Bigr)\,,
\end{equation*} 
with $\Pi^n_{d_1}(\D{y})\,\df\, \beta^n_{\mathsf{u}}F^{d_1}(\D{y})$, and 
$\beta^n_{\mathsf{u}}$ as in \cref{A2.2}.
\end{definition}

Recall the generator $\widetilde{\Lg}^{z^n}_n$ 
of $\widetilde{\Xi}^n$ given in \cref{ES4.3E}.
The next lemma establishes the relation between the generator of 
the diffusion-scaled process and the operator in \cref{D5.1}.

\begin{lemma}\label{L5.4}
Grant \cref{A2.1,A2.2,A3.2}. Then, 
\begin{equation}\label{EL5.4A}
\begin{aligned}
\widetilde{\Lg}^{z^n}_{n}\phi^n[f](\Tilde{x},h,\psi,k) &\,=\, 
\Hat{\sA}^nf\bigl(\Tilde{x},v^n(\Tilde{x},h,\psi,k)\bigr) 
+ \Hat{\cI}^n f(\Tilde{x}) \\
&\mspace{50mu}+
\order\Bigl(\frac{1}{\sqrt{n}}\Bigr)\bigl(\norm{\Tilde{x}} + \norm{\Tilde{q}^n}\bigr)
+ \order(1)(1 - \psi)\bigl(\norm{\Tilde{x}} + \norm{\Tilde{q}^n} + 1\bigr)\,,
\end{aligned}
\end{equation}
for any $f\in\Cc^{\infty}_c(\RR^d)$ and $z^n\in\fZsm^n$,
where $\Tilde{q}^n = n^{-\nicefrac{1}{2}}q^n$, and
\begin{equation}\label{EL5.4B}
v^n(\Tilde{x},h,\psi,k) = \begin{cases}
\frac{\Tilde{x} - \Tilde{z}^n(\sqrt{n}\Tilde{x}+n\rho,h,\psi,k)}
{\langle e,\Tilde{x} \rangle}\,, &\quad  
\text{if } \langle e,\Tilde{x} \rangle > 0\,, \\
e_d\,, &\quad  \text{if } \langle e,\Tilde{x} \rangle \le 0\,,
\end{cases}
\end{equation}
for $(\Tilde{x},h,\psi,k)\in\widetilde{\fD}^n$,
with $\Tilde{z}^n \df n^{\nicefrac{-1}{2}}(z^n - n\rho)$. 
\end{lemma}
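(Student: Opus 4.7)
The plan is to decompose $\widetilde{\Lg}^{z^n}_n$ following \cref{ES4.3A,ES4.3E} into the renewal/service/abandonment piece $\overline{\Lg}^{z^n}_{n,\psi}$, the restart jump $\cI_{n,\psi}$, and the age piece $\cQ_{n,\psi}$, and evaluate each on $\phi^n[f]$. The first two terms on the right-hand side of \cref{ES4.3B} act only on $h$ and shift $\Tilde{x}$ by $+n^{-\nicefrac{1}{2}}e_i$; this is precisely $\widetilde{\cH}^n$ acting in the $x$-argument, and \cref{L5.3} supplies
\begin{equation*}
\widetilde{\cH}^n\phi^n[f](\Tilde{x},h) \,=\, \sum_i \tfrac{\lambda^n_i}{\sqrt{n}}\partial_i f(\Tilde{x}) + \tfrac{1}{2}\sum_i \tfrac{\lambda^n_i c^2_{a,i}}{n}\partial_{ii}f(\Tilde{x}) + \sum_{i,j}\tfrac{\lambda^n_i}{n}\bigl(\eta^n_j(h_j) + \tfrac{c^2_{a,j}-1}{2}\bigr)\partial_{ij}f(\Tilde{x}) + \order(1/\sqrt{n}).
\end{equation*}

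For the service/abandonment term on the third line of \cref{ES4.3B} (case $\psi=1$), the shift $\Tilde{x}\mapsto\Tilde{x}-n^{-\nicefrac{1}{2}}e_i$ with coefficient $\mu^n_i z^n_i + \gamma^n_i q^n_i$, together with the substitutions $z^n_i = \sqrt{n}\,\Tilde{z}^n_i + n\rho_i$, $q^n_i = \sqrt{n}\,\Tilde{q}^n_i$ and the identification $\langle e,\Tilde{x}\rangle^+ v^n_i = \Tilde{q}^n_i$ from \cref{EL5.4B}, produces by second-order Taylor expansion of $f$ the drift $(\ell^n_i - \mu^n_i\Tilde{z}^n_i - \gamma^n_i\Tilde{q}^n_i)\partial_i f$ (once the $\sqrt{n}\mu^n_i\rho_i$ component combines with $\lambda^n_i/\sqrt{n}$ from the paragraph above) and the diagonal diffusion $\tfrac{1}{2}\bigl(\rho_i\mu^n_i + n^{-\nicefrac{1}{2}}(\mu^n_i\Tilde{z}^n_i + \gamma^n_i\Tilde{q}^n_i)\bigr)\partial_{ii}f$; combined with the diagonal $\tfrac{\lambda^n_i c^2_{a,i}}{2n}$ from \cref{L5.3}, this is exactly $\Hat{\sA}^n f(\Tilde{x},v^n)$. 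The residual $\partial_{ij}f$ cross terms in \cref{L5.3} are cancelled by the service/abandonment operator acting on the correction pieces $\Hat{\phi}^n_{1,j}[f]$, $\Hat{\phi}^n_{2,j}[f]$, $\Hat{\phi}^n_{3,j}[f]$, $\tfrac{c^2_{a,j}-1}{2\sqrt{n}}\partial_j f$, and $\tfrac{\kappa^n_j}{n}\partial_{jj}f$ in \cref{E-phi}: for instance, Taylor expanding $\Hat{\phi}^n_{1,1}[f]$ in the shift $-n^{-\nicefrac{1}{2}}e_i$ yields $-n^{-1}\sum_k \eta^n_k(h_k)\partial_{ik}f(\Tilde{x}) + O(n^{-\nicefrac{3}{2}})$, and multiplication by the leading coefficient $n\rho_i\mu^n_i = \lambda^n_i + o(\sqrt{n})$ cancels the $\eta^n_j$-cross-terms; an analogous computation with $\tfrac{c^2_{a,j}-1}{2\sqrt{n}}\partial_j f$ disposes of the $\tfrac{c^2_{a,j}-1}{2}$-cross-terms, and the iterated pieces $\Hat{\phi}^n_{1,j}[f]$ ($j\ge 2$) together with $\Hat{\phi}^n_{3,j}[f]$ handle higher-order corrections inductively. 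All subleading contributions — from the $\sqrt{n}(\mu^n_i\Tilde{z}^n_i + \gamma^n_i\Tilde{q}^n_i)$ part of the coefficient, from third-order Taylor remainders, or from the discrepancy $\mu^n_i\rho_i - \lambda^n_i/n$ of order $n^{-\nicefrac{1}{2}}$ — carry an extra factor $n^{-\nicefrac{1}{2}}$ and depend at most linearly on $\norm{\Tilde{x}}+\norm{\Tilde{q}^n}$, accounting for the first error term.

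The restart jump $\cI_{n,\psi}$ in \cref{ES4.3C} vanishes when $\psi=0$; when $\psi=1$, the change of variable $y\mapsto \tfrac{n}{\vartheta^n}\upmu^n y$ applied to $F^{d_1}$ transforms its action on $f$ directly into $\Hat{\cI}^n f(\Tilde{x})$ per \cref{D5.1}, while its action on the correction terms of $\phi^n[f]$ contributes $\order(1/\sqrt{n})$ by boundedness of $\eta^n_i$, $\kappa^n_i$ and the bound $\phi^n[f] - f = O(n^{-\nicefrac{1}{2}})$. The $(1-\psi)$ pieces — the abandonment term with kernel $\widetilde{F}^{d^n_1}_{\Breve{x},k}$ and the residual-time drift $-\sum_i n\rho_i\mu^n_i\partial_{\Breve{x}_i}$ in \cref{ES4.3B}, together with $\cQ_{n,\psi}$ in \cref{ES4.3D} — each have coefficients bounded by $O(n)$ acting on differences or derivatives of $\phi^n[f]$ of size $O(n^{-\nicefrac{1}{2}})$ (after applying $\partial_{\Breve{x}_i} = n^{-\nicefrac{1}{2}}\partial_{\Tilde{x}_i}$), so they are bounded in magnitude by $C(\norm{\Tilde{x}}+\norm{\Tilde{q}^n}+1)$ and, carrying the $(1-\psi)$ factor, are absorbed into the second error term. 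The main obstacle is the bookkeeping in the cancellation step: verifying that the iterated sums $\Hat{\phi}^n_{j,k}[f]$ in \cref{E-phi} have been engineered to match, order by order in $n^{-\nicefrac{1}{2}}$, the cross-term structure of \cref{EL5.3A} against the service/abandonment shifts, which relies on the telescoping identities \cref{ES4.2A,ES4.2B} for $\eta^n_i$ and $\kappa^n_i$ that eliminate the unbounded factors $r^n_i(h_i)$ at every stage.
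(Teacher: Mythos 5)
The overall architecture of your argument is the same as the paper's: apply \cref{L5.3} to the renewal piece, Taylor-expand the service/abandonment shift in diffusion scale, identify $\Hat{\sA}^n f$, and sweep the remaining pieces into the two error terms using boundedness of the correction functions and $\phi^n[f]-f\in\order(n^{-\nicefrac{1}{2}})$. But there is a genuine gap in your treatment of the $(1-\psi)$ pieces.

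You claim that the residual-time drift $-(1-\psi)\sum_i n\rho_i\mu^n_i\partial_{\Breve{x}_i}\phi^n[f]$, together with the down-state abandonment term and $\cQ_{n,\psi}$, ``each have coefficients bounded by $O(n)$ acting on differences or derivatives of $\phi^n[f]$ of size $O(n^{-\nicefrac{1}{2}})$, so they are bounded in magnitude by $C(\norm{\Tilde{x}}+\norm{\Tilde{q}^n}+1)$.'' The arithmetic here is $\order(n)\cdot\order(n^{-\nicefrac{1}{2}})=\order(\sqrt{n})$, not $\order(1)$: in scaled coordinates the residual-time drift is $-(1-\psi)\sum_i\sqrt{n}\rho_i\mu^n_i\,\partial_{\Tilde{x}_i}\phi^n[f]$, which is genuinely of order $\sqrt{n}$ when $\psi=0$, since $\partial_{\Tilde{x}_i}\phi^n[f]=\partial_i f+\order(n^{-\nicefrac{1}{2}})$ is $\order(1)$. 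That term alone cannot be absorbed into $\order(1)(1-\psi)(\norm{\Tilde{x}}+\norm{\Tilde{q}^n}+1)$, so the bound you assert is unjustified as stated.

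What you are missing is the cancellation that tames this $\order(\sqrt{n})$ contribution. Your paragraph two correctly extracts $\Hat{\sA}^n f$ by combining the arrival drift $\lambda^n_i/\sqrt{n}$ with the leading part $\sqrt{n}\rho_i\mu^n_i$ of the Taylor expansion of the service/abandonment shift; but the service term in \cref{ES4.3B} carries the coefficient $\psi$, so to produce the $\psi$-independent $\Hat{\sA}^n f$ one must write $\psi(\mu^n_i z^n_i+\gamma^n_i q^n_i)D_i^- = (\mu^n_i z^n_i+\gamma^n_i q^n_i)D_i^- + (\psi-1)(\mu^n_i z^n_i+\gamma^n_i q^n_i)D_i^-$, where $D_i^-$ denotes the backward difference $\phi^n[f](\Tilde{x}-n^{-\nicefrac{1}{2}}e_i,h)-\phi^n[f](\Tilde{x},h)$. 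The leftover $(\psi-1)$-piece is itself of order $\sqrt{n}$, and it is precisely this piece that cancels the residual-time drift: expanding $D_i^- = -n^{-\nicefrac{1}{2}}\partial_{\Tilde{x}_i}\phi^n[f]+\order(n^{-1})$ and $\mu^n_i z^n_i+\gamma^n_i q^n_i = n\rho_i\mu^n_i + \sqrt{n}\bigl(\mu^n_i\Tilde{x}_i+(\gamma^n_i-\mu^n_i)\Tilde{q}^n_i\bigr)$, one finds
\begin{equation*}
(\psi-1)(\mu^n_i z^n_i+\gamma^n_i q^n_i)D_i^-\phi^n[f] - (1-\psi)\sqrt{n}\rho_i\mu^n_i\,\partial_{\Tilde{x}_i}\phi^n[f]
\,=\, \order(1)(1-\psi)\bigl(\norm{\Tilde{x}}+\norm{\Tilde{q}^n}+1\bigr)\,,
\end{equation*}
with the two $\sqrt{n}\rho_i\mu^n_i\partial_{\Tilde{x}_i}\phi^n[f]$ contributions cancelling exactly. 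The paper carries both of these terms explicitly (as the fifth and sixth summands in its expansion of $\widetilde{\Lg}^{z^n}_n\phi^n[f]$) and performs this cancellation; your proof omits the $(\psi-1)$ leftover from the service term entirely, so the surviving residual-time drift blows up. Adding this bookkeeping step repairs the argument. A secondary, minor point: $n\rho_i\mu^n_i-\lambda^n_i$ is $\order(\sqrt{n})$, not $o(\sqrt{n})$ as you wrote, but this does not affect the final $\order(n^{-\nicefrac{1}{2}})$ bound on the cross-term cancellation.
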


\begin{proof}
Note that \cref{L5.3} concerns the renewal arrival process in the
diffusion-scale.
Recall that $z^n_i = \sqrt{n}(\Tilde{x}_i - \Tilde{q}^n_i) + n\rho_i$ for $i\in\sI$,
and $\Breve{x} = \sqrt{n}\Tilde{x} + n\rho$.
We let $q^n \equiv q^n(\sqrt{n}\Tilde{x} + n\rho,z^n)$ and 
$z^n\equiv z^n(\sqrt{n}\Tilde{x} + n\rho,h,\psi,k)$.
Applying \cref{L5.3} and the Taylor expansion, 
it follows by the definition of $\widetilde{\Lg}^{z^n}_n$ that
\begin{align}\label{PL5.4A}
&\widetilde{\Lg}^{z^n}_n \phi^n[f](\Tilde{x},h,\psi,k) \,=\, 
\sum_{i\in\sI}\biggl[ 
\biggl(\frac{(\lambda^n_i - n\rho_i\mu^n_i) }{\sqrt{n}} 
- \mu_i^n(\Tilde{x}_i - \Tilde{q}_i^n) - \gamma^n_i\Tilde{q}^n_i\biggr)
\partial_if(\Tilde{x}) \nonumber\\
& \mspace{10mu}
+ \frac{1}{2}\biggl(\frac{\lambda^n_ic^2_{a,i}}{n} + \rho_i\mu_i^n + 
\frac{\Tilde{x}_i + (\mu^n_i - \gamma^n_i)\Tilde{q}^n_i}
{\sqrt{n}}\biggr)\partial_{ii}f(\Tilde{x}) 
+ \frac{\lambda^n_i-n\rho_i\mu^n_i}{n}\sum_{j\in\sI}\biggl(\eta^n_j(h_j) 
+ \frac{c^2_{a,j} -1}{2}\biggr)\partial_{ij}f(\Tilde{x}) \nonumber\\
&\mspace{10mu} 
+ (1 - \psi)\gamma^n_i 
\bigl({\phi}^n[f](\Tilde{x} - n^{\nicefrac{-1}{2}}e_i,h) -
{\phi}^n[f](\Tilde{x},h)\bigr)
\int_{\RR_*}q^n_i\bigl(\sqrt{n}\Tilde{x} + n\rho - n\upmu^n(y - k),z^n\bigr)
\Tilde{F}^{d^n_1}_{\Breve{x},k}(\D{y}) \nonumber\\
&\mspace{10mu}
+ (\psi-1)(\mu^n_iz^n_i + \gamma^n_iq^n_i)
\bigl({\phi}^n[f](\Tilde{x} - n^{\nicefrac{-1}{2}}e_i,h) -
{\phi}^n[f](\Tilde{x},h)\bigr) \nonumber\\ 
&\mspace{10mu} 
- (1-\psi)\sqrt{n}\mu^n_i\rho_i
\frac{\partial{\phi}^n[f](\Tilde{x},h)}{\partial \Tilde{x}_i}
\biggr] + \psi\,\Hat{\cI}^n {\phi}^n[f](\Tilde{x},h) 
+ \order\Bigl(\frac{1}{\sqrt{n}}\Bigr)(\norm{\Tilde{x}} + \norm{\Tilde{q}^n})
\end{align}
for any $f\in\Cc^{\infty}_c(\RR^d)$,
where
\begin{equation*}
\Hat{\cI}^n {\phi}^n[f](\Tilde{x},h) \,=\, 
\int_{\Rd}\bigl({\phi}^n[f](\Tilde{x}+y,h) 
- {\phi}^n[f](\Tilde{x},h)\bigr)\,\nu^n_{d_1}(\D y) 
\end{equation*}
by a slight abuse of notation.
It is clear that
\begin{equation}\label{PL5.4B}
\lambda^n_i - n\mu^n_i\rho_i \,=\, \order(\sqrt{n})
\end{equation}
by \cref{A2.1}, 
and thus the third term in the sum on the right-hand side of \cref{PL5.4A} 
is of order $n^{-\nicefrac{1}{2}}$.
We next consider the fifth and sixth terms 
in the sum on the right-hand side of \cref{PL5.4A}.	
Using the fact that
\begin{equation*}
\phi^n[f](\Tilde{x} - n^{\nicefrac{-1}{2}}e_i,h) - \phi^n[f](\Tilde{x},h) \,=\,
-\frac{1}{\sqrt{n}} \frac{\partial\phi^n[f](\Tilde{x},h)}{\partial \Tilde{x}_i}
+ \order\biggl(\frac{1}{n}\biggr)\,, 
\end{equation*}
and $z^n_i = \sqrt{n}{\Tilde{x}}_i + n\rho_i - \sqrt{n}\Tilde{q}^n_i$, 
we obtain
\begin{equation*}
\begin{aligned}
(\psi-1)(\mu^n_iz^n_i + \gamma^n_iq^n_i)
 \bigl({\phi}^n[f](\Tilde{x} - n^{\nicefrac{-1}{2}}e_i,h) -
 {\phi}^n[f](\Tilde{x},h)\bigr) 
 - (1-\psi)\sqrt{n}\mu^n_i\rho_i\frac{\partial{\phi}^n[f](x,h)}{\partial \Tilde{x}_i} \\
= (\psi - 1)\bigl(\mu^n_i \Tilde{x}_i + (\mu^n_i -\gamma^n_i)\Tilde{q}^n_i\bigr)
\biggl(-\frac{\partial\phi^n[f](\Tilde{x},h)}{\partial x_i} 
+ \order\Bigl(\frac{1}{\sqrt{n}}\Bigr) \biggr)\,.
\end{aligned}
\end{equation*}
Recall the definition of $\Tilde{F}^{d^n_1}_{\Breve{x},k}$ in \cref{ES4.3B}.
Note that 
\begin{equation}\label{PL5.4C}
\int_{\RR_*}n\mu_i^n\rho_i(y - k)\,
\Tilde{F}^{d^n_1}_{\Breve{x},k}(\D{y})\,\le\,  
\frac{n}{\vartheta^n}\mu^n_i\rho_i
\Exp\bigl[d_1 - \vartheta^nk\,|\, d_1 > \vartheta^nk\bigr]
\,\in\, \order(\sqrt{n})\,,
\end{equation}
where the second equality follows by \cref{A2.2,EA3.2A}.
Note that $\Tilde{q}^n_i \le \langle e,\Tilde{x} \rangle^+$
for $i\in\sI$ and $(\Tilde{x},h,\psi,k)\in\widetilde{\fD}^n$. Thus,
the fourth term in the sum on the right-hand side  of \cref{PL5.4A} is bounded by 
$C(1 - \psi)(1 + \langle e,\Tilde{x} \rangle^+)$ for some positive constant $C$.
It is evident that $\phi^n[f] - f \in \order(n^{-\nicefrac{1}{2}})$, and
\begin{equation*}
\psi\,\Hat{\cI}^n {\phi}^n[f](\Tilde{x},h) \,=\, \Hat{\cI}^n f(\Tilde{x}) 
+ (\psi - 1)\,\Hat{\cI}^n f(\Tilde{x})
+ \psi\,\Hat{\cI}^n ({\phi}^n[f] -f)(\Tilde{x},h)\,.
\end{equation*}	
Therefore, \cref{EL5.4A} follows by the boundedness of $\phi^n[f]$
and \cref{PL5.4A}.
This completes the proof.
\end{proof}

\begin{definition}\label{D5.2}
The mean empirical measure $\Hat{\zeta}^{z^n}_T\in\cP(\Rd\times\cS)$ 
associated with $\Hat{X}^n$ and a stationary Markov policy $z^n\in\fZsm^n$
is defined by
\begin{equation*}
\Hat{\zeta}^{z^n}_T(A\times B) \,\df\, \frac{1}{T}
\Exp\biggl[\int_{0}^{T}
\Ind_{A\times B}\bigl(\Hat{X}^n(s),
v^n\bigl(\Hat{X}^n(s),H^n(s),\Psi^n(s),K^n(s)\bigr)\bigr)\,\D{s}\biggr]
\end{equation*}
for any Borel sets $A\subset\Rd$ and $B\subset\cS$,
and with $v^n$ as in \cref{EL5.4B}.
\end{definition}

The following theorem characterizes the limit points of mean empirical measures.

\begin{theorem}\label{T5.3}
Grant the hypotheses in \cref{T3.3}.
Let  $\{z^n\in\fZsm^n \colon n\in \NN\}$ be a sequence of policies
satisfying \cref{EL5.2A}.
Then any limit point $\uppi\in\cP(\Rd\times\cS)$ of $\Hat{\zeta}^{z^n}_T$ as 
$(n,T)\rightarrow\infty$ lies in $\eom$.
\end{theorem}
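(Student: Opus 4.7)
The plan is to apply Dynkin's formula for the joint Markov process $\widetilde{\Xi}^n = (\widetilde{X}^n,H^n,\Psi^n,K^n)$ with the bounded test function $\phi^n[f]$ defined in \cref{E-phi}, and then pass to the limit first in $T$ and then in $n$, using \cref{L5.4} to identify the generator in the limit. Fix $f\in\Cc^{\infty}_c(\Rd)$. Dynkin's identity yields
\begin{equation*}
\frac{1}{T}\,\Exp^{z^n}\bigl[\phi^n[f](\widetilde{\Xi}^n(T)) - \phi^n[f](\widetilde{\Xi}^n(0))\bigr] \,=\, \frac{1}{T}\,\Exp^{z^n}\biggl[\int_0^T \widetilde{\Lg}^{z^n}_n\phi^n[f](\widetilde{\Xi}^n(s))\,\D{s}\biggr]\,.
\end{equation*}
Thanks to \cref{A3.2}, each corrector in $\phi^n[f]$ is at most $\order(n^{-\nicefrac{1}{2}})$ pointwise, so $\phi^n[f]$ is bounded uniformly in $n$. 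Hence the left-hand side tends to $0$ as $T\to\infty$, and it remains to identify the right-hand side in the iterated limit.

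I then insert the expansion of \cref{L5.4} into the right-hand side. The error term $\order(n^{-\nicefrac{1}{2}})(\norm{\widetilde{X}^n}+\norm{\widetilde{Q}^n})$ contributes at most $\order(n^{-\nicefrac{1}{2}})$ to the long-run time average because $\norm{\widetilde{Q}^n}$ is dominated by $\langle e,\widetilde{X}^n\rangle^+ +\mathrm{const}$ and \cref{L5.2} furnishes a uniform-in-$T$ moment bound for $\Hat{X}^n$, with $\Hat{X}^n$ close to $\widetilde{X}^n$ in the long-run-average sense as established in the proof of \cref{T4.1}. The second error $\order(1)(1-\Psi^n)(\norm{\widetilde{X}^n}+\norm{\widetilde{Q}^n}+1)$ is handled by Cauchy--Schwarz: the second moment of the state factor is bounded by \cref{L5.2}, while the long-run average of $\Prob(\Psi^n(\cdot)=0)$ is of order $n^{-\nicefrac{1}{2}}$ by \cref{A2.2} and the renewal calculation appearing in the proof of \cref{T4.1}, so the product vanishes in the double limit.

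For the leading terms, $\Hat{\sA}^nf + \Hat{\cI}^nf \to \Ag f$ in the appropriate sense: by \cref{A2.1}, $\sA^n_{1,i}(x,u)\to b_i(x,u)$ and $\sA^n_{2,i}(x,u)\to \lambda_i(1+c^2_{a,i})$ locally uniformly, matching the local part of $\Ag$ in \cref{ES4.1A}, while by \cref{A2.2} the finite measures $\nu^n_{d_1}$ converge to $\nu_L$, which together with $f\in\Cc^\infty_c(\Rd)$ gives $\Hat{\cI}^nf\to\int(f(\cdot+y)-f(\cdot))\,\nu_L(\D y)$ pointwise, with $\Hat{\cI}^nf$ uniformly bounded by $2\beta^n_{\mathsf{u}}\norm{f}_\infty$. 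After replacing $\widetilde{X}^n$ by $\Hat{X}^n$ via the same closeness estimate, the right-hand side equals $\int_{\Rd\times\cS}\Ag f(x,u)\,\Hat{\zeta}^{z^n}_T(\D x,\D u)$ up to vanishing errors. Since $\{\Hat{\zeta}^{z^n}_T\}$ is tight by \cref{L5.2}, any subsequential limit $\uppi$ satisfies $\int\Ag f\,\D\uppi = 0$ for every $f\in\Cc^\infty_c(\Rd)$, i.e., $\uppi\in\eom$ by \cref{E-eom}.

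The main obstacle is the control of the $(1-\Psi^n)$-dependent error in \cref{L5.4}. This error is of order one on `down' excursions and its state factor grows with $\norm{\widetilde{X}^n}$, so its suppression hinges on the precise $\order(n^{-\nicefrac{1}{2}})$ rate at which the alternating renewal process visits the `down' state; this rate in turn relies on the scaling of the mean downtime in \cref{A2.2} and the bounded-mean-residual-life hypothesis \cref{EA3.2A}. Once these error estimates are in place, the construction of the correctors $\Hat{\phi}^n_{1,j},\Hat{\phi}^n_{2,j},\Hat{\phi}^n_{3,j}$ in \cref{E-phi} --- designed via the identities \cref{ES4.2A,ES4.2B,ES4.2C,ES4.2D} to cancel the $r^n_i$ contributions in the generator expansion --- carries the remaining bookkeeping.
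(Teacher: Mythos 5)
Your proposal follows the same route as the paper: apply Dynkin's (It\^o's) formula with the corrector $\phi^n[f]$, invoke \cref{L5.4} to expand the generator, show the residual terms vanish in the double limit and that $\Hat{\sA}^nf + \Hat{\cI}^nf \to \Ag f$ locally uniformly, then conclude via the tightness furnished by \cref{L5.2}. However, there is one genuine gap in the suppression of the error $\order(1)(1-\Psi^n)\bigl(\norm{\widetilde{X}^n}+\norm{\widetilde{Q}^n}+1\bigr)$. You invoke Cauchy--Schwarz and claim that ``the second moment of the state factor is bounded by \cref{L5.2},'' but \cref{L5.2} only delivers a long-run $m$-th moment bound where $m$ is the exponent of the running cost $\widetilde{\rc}=c\abs{\cdot}^m$, and the hypothesis of \cref{T3.3} requires only $m>1$, not $m\ge 2$. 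For $1<m<2$ the second moment you rely on is simply not available, so the Cauchy--Schwarz step fails.

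The paper circumvents this by using Young's inequality with conjugate exponents $m$ and $\nicefrac{m}{m-1}$: the error factor is written as
\begin{equation*}
n^{\frac{m-1}{4m}}\bigl(1-\Psi^n(s)\bigr)\cdot n^{\frac{1-m}{4m}}\bigl(\norm{\widetilde{X}^n(s)}+\norm{\Tilde{q}^n(\cdots)}\bigr)\,,
\end{equation*}
which splits into $n^{\nicefrac{1}{4}}\bigl(1-\Psi^n\bigr)$ (whose long-run average is $\order(n^{-\nicefrac{1}{4}})$ by \cref{PT5.3D}) and $n^{\nicefrac{(1-m)}{4}}\abs{\widetilde{X}^n}^m$ (whose long-run average vanishes as $n\to\infty$ since $m>1$). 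This is precisely the point at which only the $m$-th moment is needed. Replacing your Cauchy--Schwarz step with this Young's-inequality argument (or simply restricting attention to $m\ge 2$) closes the gap; the remainder of your proof is in line with the paper's.
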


\begin{proof}
It follows directly by \cref{A2.1,A2.2} that,
for any $f\in\Cc^{\infty}_c(\RR^d)$, we have 
\begin{equation}\label{PT5.3A}
\Hat{\sA}^nf(\Hat{x},u) + \Hat{\cI}^nf(\Hat{x}) 
\,\to\, \Ag f(\Hat{x},u) \quad 
\text{as\ } n\rightarrow\infty
\end{equation}
uniformly over compact sets of $\RR^d\times\cS$.
Thus, in view of \cref{PT5.3A,E-eom}, 
in order to prove the theorem, it is enough to show that
\begin{equation}\label{PT5.3B}
\lim_{(n,T)\rightarrow\infty}\,\int_{\Rd\times\cS}
\bigl(\Hat{\sA}^nf(\Hat{x},u) + 
\Hat{\cI}^n f(\Hat{x})\bigr)\,\Hat{\zeta}^{z^n}_{T}(\D{\Hat{x}},\D{u}) \,=\, 0
\qquad\forall\,f\in\Cc^{\infty}_c(\Rd)\,.
\end{equation}
Applying \cref{EL5.2A,PT4.1G}, we obtain
\begin{equation}\label{PT5.3C}
\sup_{n>n_\circ}\,\limsup_{T\rightarrow\infty}\,\frac{1}{T}\,\Exp^{z^n}
\left[\int_0^T\abs{\widetilde{X}^n(s)}^m\,\D{s}\right] \,<\, \infty\,.
\end{equation}
It follows by the same calculation as in \cref{PL5.1B} that,
for some positive constant $C_1$, we have
\begin{equation}\label{PT5.3D}
\Exp^{z^n}\biggl[\int_0^{T}\sqrt{n}(1 - \Psi^n(s))\,\D{s}\biggr]
\,\le\, C_1(1 + T) \quad \forall\,T\ge 0\,. 
\end{equation}
Using the facts that $\Tilde{q}^n_i\le \langle e,x \rangle^+$ and
$\Psi^n(s) \in \{0,1\}$, and
Young's inequality, we obtain
\begin{align}\label{PT5.3E}
\frac{1}{T}\Exp^{z^n}&\biggl[\int_0^{T}n^{\frac{m-1}{4m}}
\bigl(1 - \Psi^n(s)\bigr)n^{\frac{1-m}{4m}}
\Bigl(\norm{\widetilde{X}^n(s)} + 
\norm{\Tilde{q}^n\bigl(\sqrt{n}\widetilde{X}^n(s) + n\rho,z^n\bigr)}\Bigr)
\,\D{s}\biggr] \nonumber\\
&\,\le\, \frac{1}{T}\Exp^{z^n}\biggl[\int_0^{T}n^{\frac{1}{4}}\bigl(1 - \Psi^n(s)\bigr)
\,\D{s}\biggr] 
+ \frac{C_2}{T}\Exp^{z^n}\biggl[\int_0^{T}n^{\frac{1-m}{4}}
\abs{\widetilde{X}^n(s)}^m \,\D{s}\biggr] \nonumber\\
&\,\le\, \frac{1}{T n^{\frac{1}{4}}}C_1(1 + T) 
+ n^{\frac{1-m}{4}}\frac{C_2}{T}
\Exp^{z^n}\biggl[\int_0^{T}\abs{\widetilde{X}^n(s)}^m\,\D{s}\biggr]
 \,\longrightarrow\, 0 \quad \text{as\ } (n,T) \rightarrow\infty\,,
\end{align}
where $C_2$ is a positive constant.
In \cref{PT5.3E},
the second inequality follows by
\cref{PT5.3D}, and the convergence follows 
by \cref{PT5.3C} and the fact that $m>1$.
Applying It\^{o}'s formula to $\phi^n[f]$, 
and using \cref{L5.4,PT5.3C,PT5.3E},
it follows by the boundedness of $\phi^n[f]$ that
\begin{equation*}
\lim_{(n,T)\rightarrow\infty}\,\frac{1}{T}\Exp^{z^n}
\biggl[\int_0^{T}
\Hat{\sA}^nf\bigl(\widetilde{X}^n(s),v^n\bigl(\widetilde{\Xi}^n(s)\bigr)\bigr) 
+ \Hat{\cI}^n f\bigl(\widetilde{X}^n(s)\bigr)\,\D{s}\biggr] \,=\, 0\,.
\end{equation*} 
Therefore, using \cref{PT4.1G} again, we obtain \cref{PT5.3B}.
This completes the proof.
\end{proof}

\begin{proof}[Proof of \cref{ET3.3A}]
Without loss of generality, suppose $\{n_j\}\subset \NN$ is an increasing sequence 
such that $z^{n_j}\in\fZsm$ and $\sup_{j}\Hat{J}(\Hat{X}^{n_j}(0),z^{n_j})<\infty$.
Recall $\Hat{\zeta}^{z^n}_T$ in \cref{D5.2}. 
There exists a subsequence of $\{n_j\}$, denoted as $\{n_l\}$,
such that $T_{l} \rightarrow\infty$ as $l\rightarrow\infty$, and
\begin{equation}\label{PT3.3A}
\liminf_{j\rightarrow\infty}\, \Hat{J}(\Hat{X}^{n_j}(0),z^{n_j}) + \frac{1}{l}
\,\ge\, \int_{\Rd\times\Act}\rc(\Hat{x},u)
\,\Hat{\zeta}^{z^{n_l}}_{T_{l}}(\D{\Hat{x}},\D{u})\,.
\end{equation}
Applying \cref{L5.2,T5.3}, any limit of $\Hat{\zeta}^{z^{n_l}}_{T_{l}}$
along some subsequence is in $\eom$. 
Choose any further subsequence of $(T_l,n_l)$, also denoted by $(T_l,n_l)$,
such that $(T_l,n_l) \rightarrow\infty$ as $l\rightarrow\infty$, and
$\Hat{\zeta}^{z^{n_l}}_{T_{l}}\rightarrow \uppi \in \eom$. 
By letting $l \rightarrow\infty$ and using \cref{PT3.3A}, 
we obtain
\begin{equation*}
\liminf_{j\rightarrow\infty}\,\Hat{J}(\Hat{X}^{n_j}(0),z^{n_j})
\,\ge\, \int_{\Rd\times\Act}\rc(\Hat{x},u)
\,\uppi(\D{\Hat{x}},\D{u}) \,\ge\, \varrho_*\,.
\end{equation*} 
This completes the proof.
\end{proof}

\subsubsection{The upper bound}\label{S5.3.2}
In this subsection, we show that
\begin{equation}\label{ET3.3B}
\limsup_{n\rightarrow\infty}\,
\varrho^n\bigl(\Hat{X}^n(0)\bigr) \,\le\, \varrho_*\,.
\end{equation}

The following lemma concerns the convergence of mean empirical measures for 
the diffusion-scaled state processes under the scheduling policies in \cref{D4.3}.
Recall $\fA^n_R$ in \cref{D4.2} and $\Hat{\zeta}^{z^n}_T$ in \cref{D5.2}.

\begin{lemma}\label{L5.5}
Grant the hypotheses in \cref{T3.3}.
For $\epsilon > 0$, let $v_\epsilon$ be a continuous $\epsilon$-optimal precise control, 
whose existence is asserted in \cref{T5.2},
and $\{z^n[v^n]\colon n\in\NN\}$ be as in \cref{D4.3}, and such
that $R\equiv R(\epsilon)$ and $v^n$ agrees with $v_\epsilon$ 
on $\fA^n_R$.  
Then, the ergodic occupation measure $\uppi_{v_\epsilon}$ 
of the controlled jump diffusion in \cref{ET3.1A} 
under the control $v_\epsilon$ is the unique limit point 
in $\cP(\Rd\times\cS)$ of $\Hat{\zeta}^{z^n[v^n]}_T$ as $(n,T)\rightarrow\infty$. 
\end{lemma}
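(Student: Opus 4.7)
The plan is to combine \cref{C4.1,T5.3} with a quantitative estimate for the action parameterization and uniqueness of the invariant measure under $v_\epsilon$. First, \cref{C4.1} ensures that, under the scheduling policy $z^n[v^n]$, the long-run average $m$-th moment of $\Hat{X}^n$ is bounded uniformly for all large $n$. Together with the compactness of $\cS$, this gives tightness of the family $\bigl\{\Hat{\zeta}^{z^n[v^n]}_T\bigr\}_{n,T}$ in $\cP(\Rd\times\cS)$. Second, applying \cref{T5.3} (whose hypothesis is exactly this moment bound), any subsequential weak limit $\uppi$ of $\Hat{\zeta}^{z^n[v^n]}_T$ as $(n,T)\to\infty$ lies in $\eom$. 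It then remains to pin down which element of $\eom$ it is.

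The heart of the argument is to show that any such limit $\uppi$ has conditional distribution $\delta_{v_\epsilon(x)}$ given $x$ in the region where the control actually influences the dynamics. Fix a compact set $K\subset\{x\in\Rd\colon\langle e,x\rangle>0\}$. For all sufficiently large $n$ one has $K\subset\fA^n_R$ and $v^n\equiv v_\epsilon$ there, so combining \cref{D4.3,EL5.4B} the action parameterization $v^n_{\mathrm{act}}(\Tilde{x},h,\psi,k)=\Tilde{q}^n/\langle e,\Tilde{x}\rangle$ satisfies
\begin{equation*}
\Babs{v^n_{\mathrm{act}}(\Tilde{x},h,\psi,k)-v_\epsilon(\Tilde{x})}
\,=\, \Babs{\frac{\varpi\bigl(\sqrt{n}\,\langle e,\Tilde{x}\rangle\,v_\epsilon(\Tilde{x})\bigr)}{\sqrt{n}\,\langle e,\Tilde{x}\rangle}-v_\epsilon(\Tilde{x})}
\,\le\, \frac{C(d)}{\sqrt{n}\,\langle e,\Tilde{x}\rangle}\,,
\end{equation*}
which tends to $0$ uniformly on $K$. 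Consequently, for any $g\in\Cc_c(\Rd)$ supported in $\{\langle e,x\rangle>0\}$ and any $\phi\in\Cc(\cS)$, passing to the weak limit yields
\begin{equation*}
\int_{\Rd\times\cS}g(x)\phi(u)\,\uppi(\D{x},\D{u})\,=\,\int_{\Rd}g(x)\phi\bigl(v_\epsilon(x)\bigr)\,\uppi_1(\D{x})\,,
\end{equation*}
where $\uppi_1$ is the first marginal of $\uppi$. On $\{\langle e,x\rangle\le 0\}$, the drift $b(x,u)$ in \cref{ET3.1B} is independent of $u$, so both the generator constraint defining $\eom$ and the ergodic occupation measure $\uppi_{v_\epsilon}$ are insensitive to the control there; the identification therefore extends to all of $\Rd$.

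Finally, since $v_\epsilon\equiv e_d$ outside $\Bar{B}_R$ and $\inf_n\gamma^n_d>0$, \cref{P4.1} yields exponential ergodicity of the jump diffusion at infinity under the constant control $e_d$; a standard localization argument then promotes this to exponential ergodicity under $v_\epsilon$ itself, so that $\uppi_{v_\epsilon}\in\eom$ is the unique ergodic occupation measure associated with $v_\epsilon$. Combining this with the identification above, every subsequential limit of $\Hat{\zeta}^{z^n[v^n]}_T$ equals $\uppi_{v_\epsilon}$, and by tightness the full sequence converges. The main obstacle is the identification step: producing the quantization bound above uniformly in the auxiliary variables $(h,\psi,k)$, converting a pointwise-in-$x$ estimate into an integral estimate uniform in the time horizon $T$, and handling the degeneracy of $\langle e,\Tilde{x}\rangle$ near zero either through the compact support of $g$ or through the $u$-independence of the limiting dynamics on $\{\langle e,x\rangle\le 0\}$.
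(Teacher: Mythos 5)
Your argument is correct, and it reconstructs the standard proof that the paper simply cites (the paper's proof is a one-line reference to \cite{AP18}*{Lemma~7.2}). Your three-step structure (moment bounds from \cref{C4.1} giving tightness, \cref{T5.3} placing any limit in $\eom$, and the quantization estimate pinning the conditional law of $u$ to $\delta_{v_\epsilon(x)}$ on $\{\langle e,x\rangle>0\}$, with uniqueness via stability of $v_\epsilon$) is exactly the argument that lemma embodies; the only cosmetic remark is that the appeal to exponential ergodicity at the end is unnecessary, since $v_\epsilon\in\Ussm$ (\cref{T5.2}) already guarantees a unique invariant probability measure and hence a unique associated ergodic occupation measure.
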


\begin{proof}
Using \cref{C4.1,T5.3}, the proof of this lemma is the same as  
that of Lemma~7.2 in \cite{AP18}.
\end{proof}

\begin{proof}[Proof of \cref{ET3.3B}] 
Let $\upkappa = 2\lfloor m \rfloor$ with $m$ as in \cref{ES3.2A}, and
$z^n[v^n]$ be the scheduling policy in \cref{L5.5}.
By \cref{C4.1}, there exist $\tilde{n}_\circ\in\NN$, and positive constants
$\widetilde{C}_0$ and $\widetilde{C}_1$ such that 
\begin{equation}\label{PT3.3B}
\widetilde{\Lg}^{{z}^n[v^n]}_n \widetilde{\Lyap}^n_{\upkappa,\xi}(\Tilde{x},h,\psi,k)
\,\le\, 
\widetilde{C}_0 -  \widetilde{C}_1 \Lyap_{\upkappa-1,\xi}(\Tilde{x})
\qquad\forall\,(\Tilde{x},h,\psi,k)\in\widetilde{\fD}^n\,,
\quad \forall\,n> \tilde{n}_\circ\,.
\end{equation}
Recall the definition of $\widetilde{\rc}$ in \cref{ES3.2A},
and let $\Hat{z}^n[v^n] = n^{\nicefrac{-1}{2}}(z^n[v^n] - n\rho)$.
Applying \cref{PT4.1G,PT3.3B}, we may select an increasing sequence $T_n$
such that
\begin{equation*}
\sup_{n\ge \tilde{n}_\circ}\,\sup_{T\ge T_n}\,\int_{\Rd\times\Act}
\Lyap_{\upkappa-1,\xi}(\Hat{x})\,
\Hat{\zeta}^{z^n[v^n]}_T(\D{\Hat{x}},\D{u})\,<\,\infty\,.
\end{equation*}
This implies that
$\widetilde{\rc}\bigl(\Hat{x} - \Hat{z}^n[v](\sqrt{n}\Hat{x}+n\rho)\bigr)$
is uniformly integrable.
By \cref{L5.5}, $\Hat{\zeta}^{z^n[v^n]}_T$ converges in $\cP(\Rd\times\cS)$
to $\uppi_{v_\epsilon}$ as $(n,T)\rightarrow\infty$.
Applying \cref{T5.2}, we deduce that $v_\epsilon$ is an $\epsilon$-optimal
control for the running cost function. 
Since $\epsilon$ is arbitrary, \cref{ET3.3B} follows.
\end{proof}


\appendix

\section{Proofs of \texorpdfstring{\cref{L3.1,T3.1}}{}}\label{AppA}

\begin{proof}[Proof of \texorpdfstring{\cref{L3.1}}{}]
By \cite[Lemma~5.1]{PW09},  $\Hat{S}^n_i(t)$ 
and $\Hat{R}^n_i(t)$ in \cref{E-HatX}
are martingales with respect to the filtration $\cF^n_t$ in \cref{E-filtration},
having 
predictable quadratic variation processes given by
\begin{equation*}
\langle \Hat{S}^n_{i} \rangle(t) \,=\, \mu^n_i\int_0^tn^{-1}{Z}^n_i(s)\Psi^n(s)\,\D{s}
\quad \text{and}
\quad \langle \Hat{R}^n_{i} \rangle(t) \,=\, \gamma^n_i\int_0^tn^{-1}{Q}^n_i(s)\,\D{s}\,,
\quad t\ge 0\,,
\end{equation*}
respectively.
By \cref{E-dynamic}, we have the crude inequality
\begin{equation*}
0 \,\le\, n^{-1}{X}^n_i(t) \,\le\, n^{-1}{X}^n_i(0) + n^{-1}A^n_i(t)\,, \quad t\ge0\,.
\end{equation*}
Using the balance equation in \cref{ES2.1D}, we see that
the same inequalities hold for $n^{-1}{Z}_i^n$ and $n^{-1}{Q}_i^n$.
Since $\Psi^n(s) \in \{0,1\}$, 
it follows by Lemma~5.8 in \cite{PTW07} 
that $\{\Hat{W}^n_i\colon n\in\NN\}$ is stochastically bounded in $(\DD^d,J_1)$.
Also, $\{\Hat{L}^n_i\colon n\in\NN\}$ 
is stochastically bounded in $(\DD^d,M_1)$ by \cref{ES2.1C}.
On the other hand, it is evident that
\begin{equation*}
\Hat{Y}^n_i(t) \,\le\, C\int_0^t (1 + \norm{n^{-1}{X}^n(s)})\,\D{s}\,, \quad t\ge0\,,
\end{equation*}
where $C$ is some positive constant. Thus, we obtain
\begin{equation}\label{PL3.1A}
\norm{\Hat{X}^n(t)} \,\le\, \norm{\Hat{X}^n(0)}
+ \norm{\Hat{W}^n(t)} + \norm{\Hat{L}^n(t)}
+ C\int_0^t (1 + \norm{\Hat{X}^n(s)})\,\D{s}
\quad\forall\,t\ge0\,.
\end{equation}
Since $\Hat{X}^n(0)$ is uniformly bounded, 
applying Lemma~5.3 in \cite{PTW07} and Gronwall's inequality, 
we deduce that $\{\Hat{X}^n\colon n\in\NN \}$ is stochastically bounded in $(\DD^d,M_1)$.
Using Lemma~5.9 in \cite{PTW07}, we see that
\begin{equation*}
n^{-\nicefrac{1}{2}}\Hat{X}^n \,=\, n^{-1}{X}^n - \rho \;\Rightarrow\;  
\mathfrak{e}_0 \quad \text{in} \quad (\DD^d,M_1)
\quad\text{as\ } n\rightarrow\infty\,,
\end{equation*}
which implies that $n^{-1}{X}^n \Rightarrow \mathfrak{e}_\rho$ in $(\DD^d,M_1)$.
By \cref{ES2.1D}, and the fact 
$\langle e,n^{-1}{Q}^n \rangle = (\langle e,n^{-1}{X}^n\rangle - 1)^
+ \Rightarrow \mathfrak{e}_0$,
we have $n^{-1}{Q}^n \Rightarrow \mathfrak{e}_0$, 
and thus $n^{-1}{Z}^n \Rightarrow \mathfrak{e}_\rho$.
This completes the proof.
\end{proof}

To prove \cref{T3.1}, we first consider a modified process.
Let $\Check{X}^n = (\Check{X}^n_1,\dotsc,\Check{X}^n_d)'$ 
be the $d$-dimensional process defined by
\begin{equation}\label{ES5.1A}
\begin{aligned}
\Check{X}^n_i(t) &\,\df\, \Hat{X}^n(0) + \ell^n_it+ \Hat{W}^n_i(t) + \Hat{L}^n_i(t) - 
\int_0^t\mu^n_i \bigl(\Check{X}^n_i(s) - \langle e,\Check{X}^n(s) \rangle^+
{U}^n_i(s)\bigr)\,\D{s} \\ 
&\qquad - \int_0^t\gamma^n_i\langle e,\Check{X}^n(s) \rangle^+ {U}^n_i(s)\,\D{s}\,, 
\quad \text{for } i\in\sI\,.
\end{aligned}
\end{equation}

\begin{lemma}\label{LA.1}
As $n\rightarrow\infty$, $\Check{X}^n$ and $\Hat{X}^n$ are asymptotically equivalent, 
that is, if either of them converges in distribution as
$n\rightarrow\infty$, then so does the other, 
and both of them have the same limit. 
\end{lemma}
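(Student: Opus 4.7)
My strategy is to show that the difference process $e^n \df \Hat{X}^n - \Check{X}^n$ converges to zero uniformly on compact time intervals, in probability, via a Gronwall estimate whose only $n$-dependent forcing term is controlled by the scaled cumulative downtime $C^n_{\mathsf{d}}$; the latter is asymptotically negligible by \cref{ES2.1C}. Once this is shown, the classical Slutsky-type result implies that $\Hat{X}^n$ and $\Check{X}^n$ have the same weak limits whenever either one of them converges.

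First I use the parameterization $\Hat{Q}^n_i = \langle e,\Hat{X}^n\rangle^+ U^n_i$ and $\Hat{Z}^n_i = \Hat{X}^n_i - \langle e,\Hat{X}^n\rangle^+U^n_i$ together with the splitting $\Psi^n = 1-(1-\Psi^n)$ to recast \cref{E-HatX} in the form of \cref{ES5.1A} (with $\Hat{X}^n$ in place of $\Check{X}^n$) plus a remainder
\begin{equation*}
E^n_i(t) \,\df\, \mu^n_i\int_0^t\bigl(\Hat{X}^n_i(s) - \langle e,\Hat{X}^n(s)\rangle^+U^n_i(s)\bigr)\bigl(1-\Psi^n(s)\bigr)\,\D{s}\,.
\end{equation*}
Subtracting \cref{ES5.1A}, so that the martingale parts $\Hat{W}^n$, the jump parts $\Hat{L}^n$ and the drift $\ell^n t$ cancel, and exploiting that $x\mapsto\langle e,x\rangle^+$ is $1$-Lipschitz in the $\ell^1$ norm together with $U^n_i\in[0,1]$, I obtain
\begin{equation*}
\norm{e^n(t)} \,\le\, C_1\int_0^t\norm{e^n(s)}\,\D{s} + \norm{E^n(t)}\,,
\end{equation*}
where $C_1$ is a constant depending only on $\sup_n(\mu^n_i + \gamma^n_i)$, which is finite by \cref{A2.1}.

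It remains to show that $\sup_{t\le T}\norm{E^n(t)} \to 0$ in probability for every $T>0$. Since $0\le 1-\Psi^n\le 1$ and $\int_0^t(1-\Psi^n(s))\,\D{s} = C^n_{\mathsf{d}}(t)$, I have the pointwise bound
\begin{equation*}
\sup_{t\le T}\norm{E^n(t)} \,\le\, C_2\,\Bigl(\sup_{t\le T}\norm{\Hat{X}^n(t)}\Bigr)C^n_{\mathsf{d}}(T)\,.
\end{equation*}
The a priori estimate \cref{PL3.1A} in the proof of \cref{L3.1}, combined with the stochastic boundedness of $\sup_{t\le T}\norm{\Hat{W}^n(t)}$ and $\sup_{t\le T}\norm{\Hat{L}^n(t)}$ and Gronwall, shows that $\sup_{t\le T}\norm{\Hat{X}^n(t)}$ is stochastically bounded. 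Meanwhile, \cref{ES2.1C} gives tightness of $\sqrt{n}\,C^n_{\mathsf{d}}(T)$, hence $C^n_{\mathsf{d}}(T)\to 0$ in probability. Gronwall's inequality then yields $\sup_{t\le T}\norm{e^n(t)}\le \E^{C_1T}\sup_{t\le T}\norm{E^n(t)} \to 0$ in probability, which implies asymptotic equivalence in both the $J_1$ and $M_1$ topologies to the same limit. The main (modest) obstacle is arranging the algebraic splitting $\Psi^n = 1-(1-\Psi^n)$ that trades the instantaneous availability indicator for the cumulative downtime, which is the quantity actually known to be small.
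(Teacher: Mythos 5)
Your proof is correct and follows essentially the same route as the paper: both isolate the error term $\Hat{\Phi}^n_i(t) = \mu^n_i\int_0^t\Hat{Z}^n_i(s)(1-\Psi^n(s))\,\D s$ (your $E^n_i$), set up the Gronwall inequality $\norm{\Hat X^n(t)-\Check X^n(t)}\le C_1\int_0^t\norm{\Hat X^n(s)}(1-\Psi^n(s))\,\D s + C_2\int_0^t\norm{\Hat X^n(s)-\Check X^n(s)}\,\D s$, and then use the stochastic boundedness of $\Hat X^n$ together with $C^n_{\mathsf d}\to 0$ from \cref{ES2.1C}. The only difference is presentational: the paper carries out the argument on the high-probability event $\{\norm{\Hat X^n}_T\le K\}$ in explicit $\varepsilon$--$\delta$ form, whereas you phrase it via stochastic boundedness and convergence in probability.
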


\begin{proof}
Let $K = K(\epsilon_1)>0$ be the constant 
satisfying $\Prob(\norm{\Hat{X}^n}_T > K) < \epsilon_1$
for $T>0$ and any $\epsilon_1 > 0$, 
where $\norm{\Hat{X}^n}_T \df \sup_{0\le t\le T}\norm{\Hat{X}^n(t)}$.
Since $\Hat{U}^n(s)\in\cS$ for $s\ge0$, 
on the event $\{\norm{\Hat{X}^n}_T \le K\}$, we obtain
\begin{equation*}
\begin{aligned}
\norm{\Check{X}^n(t) - \Hat{X}^n(t)} & \,\le\,
C_1\int_0^{t} \norm{\Hat{X}^n(s)}\bigl(1-\Psi^n(s)\bigr)\,\D{s} +  
C_2\int_0^{t} \norm{\Check{X}^n(s) - \Hat{X}^n(s)}\,\D{s} \\
& \,\le\, C_1K C^n_{\mathsf{d}}(t) + C_2\int_0^{t} \norm{\Check{X}^n(s)
- \Hat{X}^n(s)}\,\D{s}
\quad \forall\,t\in[0,T]\,, 
\end{aligned}
\end{equation*}
where $C_1$ and $C_2$ are some positive constants.
Then, by Gronwall's inequality, on the event $\{\norm{\Hat{X}^n}_T \le K\}$, we have
\begin{equation*}
\norm{\Check{X}^n(t) - \Hat{X}^n(t)} \,\le\,  C_1K C^n_{\mathsf{d}}(t) \E^{C_2 T}
\quad\forall\,t\in[0,T]\,.
\end{equation*}
Thus, applying Lemma~2.2 in \cite{PW09}, we deduce that for any $\epsilon_2 > 0$, 
there exist $\epsilon_3>0$ and
$n_{\circ} = n_{\circ}(\epsilon_1,\epsilon_2,\epsilon_3,T)$ such that
\begin{equation*}
\norm{\Check{X}^n - \Hat{X}^n}_T \,\le\, \epsilon_2
\end{equation*}
on the event 
$\{\norm{\Hat{X}^n}_T \le K\}\cap\{\norm{C^n_{\mathsf{d}}}_T\le \epsilon_3\}$,
for all $n\ge n_{\circ}$,
which implies that
\begin{equation*}
\Prob(\norm{\Check{X}^n - \Hat{X}^n}_T > \epsilon_2) < \epsilon_1\,, 
\quad \forall\,n\ge n_{\circ}\,. 
\end{equation*}
As a consequence, $\norm{\Check{X}^n-\Hat{X}^n}_T\Rightarrow 0$, as $n\rightarrow\infty$,
and this completes the proof.
\end{proof}

\begin{proof}[Proof of \cref{T3.1}]
We first prove (i). Define the processes
\begin{equation*}
\uptau^n_{1,i}(t) \,\df\, \frac{\mu^n_i}{n}\int_0^tZ^n(s)\Psi^n(s)\,\D{s}\,, \quad
\uptau^n_{2,i}(t) \,\df\, \frac{\gamma^n_i}{n}\int_0^t Q^n(s)\,\D{s}\,,  
\end{equation*}
$\Tilde{S}_i^n(t) \df n^{-\nicefrac{1}{2}}(S^n(nt) - nt)$, and 
$\Tilde{R}_i^n(t) \df n^{-\nicefrac{1}{2}}(R^n(nt) - nt)$, for $i\in\sI$. 
Then, since $\Psi^n(s)\in\{0,1\}$ for $s\ge 0$, 
applying \cref{L3.1} and Lemma~2.2 in \cite{PW09}, 
we have
\begin{equation*}
\uptau^n_{1,i}(\cdot) \,=\,
\mu^n_i\int_0^\cdot(n^{-1}{Z}_i^n(s) - \rho_i)\Psi^n(s)\,\D{s}
+ \mu^n_i\int_0^\cdot\rho_i\Psi^n(s)\,\D{s} 
\;\Rightarrow\; \lambda_i\mathfrak{e}(\cdot)\,.
\end{equation*}
in $(\DD, M_1)$, as $n\rightarrow\infty$, and that
$\uptau^n_{2,i}$ weakly converges to the zero process. 
Since $\{A_i^n,S_i^n,R^n_i,\Psi^n\colon i\in\sI, n\in\NN\}$ are independent processes, 
and $\tau^n_{1,i}$ and $\tau^n_{2,i}$ converge to deterministic functions,
we have joint weak convergence of 
$(\Hat{A}^n,\Hat{S}^n,\Hat{R}^n,\Hat{L}^n,\uptau^n_{1},\uptau^n_2)$,
where $\uptau^n_1\df (\uptau^n_{1,1},\dotsc,\uptau^n_{1,d})'$,
and $\uptau^n_2$ is defined analogously.
On the other hand, since the second moment of $A^n$ is finite,
it follows that $\Hat{A}^n$  converges weakly to a $d$-dimensional Wiener process
with mean $0$ and covariance matrix 
$\diag\bigl(\sqrt{\lambda_1c^2_{a,1}},\dotsc,\sqrt{\lambda_dc^2_{a,d}}\bigr)$
(see, e.g., \cite{IW-71}).
Therefore, by the FCLT for the Poisson processes $\Tilde{S}^n$
and $\Tilde{R}^n$, 
and using the random time change lemma in
\cite[Page 151]{Patrick-99}, we obtain (i).

Using \cref{PL3.1A} and \cref{T3.1}\,(i), 
the proof of (ii) is same as the proof of \cite[Lemma~4 (iii)]{AMR04}.

To prove (iii), we first show any limit of $\Check{X}^n$ 
in \cref{ES5.1A} satisfies \cref{ET3.1A}.
Following an argument similar to the proof of Lemma~5.2 in \cite{PW09}, 
one can easily show that the $d$-dimensional integral mapping 
$x = \Lambda(y,u) \colon \DD^d\times\DD^d \to \DD^d$ defined by 
\begin{equation*}
x(t) = y(t) + \int_0^{t} h\bigl(x(s),u(s)\bigr)\,\D{s}
\end{equation*}
is continuous in $(\DD^d,M_1)$, 
provided that the function $h\colon \Rd\times\Rd \to \Rd$ is Lipschitz continuous
in each coordinate.
Since
\begin{equation*}
\Check{X}^n \,=\, \Lambda(\Hat{X}^n(0) + \Hat{W}^n + \Hat{L}^n, {U}^n)\,,
\end{equation*}  
then, by the tightness of ${U}^n$ and the continuous mapping theorem, 
any limit of $\Check{X}^n$ satisfies \cref{ES5.1A}, 
and the same result holds for $\Hat{X}^n$ by \cref{LA.1}.

Recall the definition of $\breve{\tau}^n$ in \cref{ES2.2A}. 
It is evident that
\begin{equation}\label{PT3.1B}
\begin{aligned}
\Hat{L}^n_i(t+r) - \Hat{L}^n_i(t) &\,=\, \Hat{L}^n_i(\breve{\tau}^n(t) + r)
-  \Hat{L}^n_i\bigl(\breve{\tau}^n(t)\bigr) \\ 
&\mspace{50mu}+ \Hat{L}^n_i(t+r) 
-  \Hat{L}^n_i(\breve{\tau}^n(t) + r) + \Hat{L}^n_i\bigl(\breve{\tau}^n(t)\bigr)
-  \Hat{L}^n_i(t)\,.
\end{aligned}
\end{equation}
for all $t,r\ge0$ and $i\in\sI$.
By \cref{A2.2}, we have $\breve{\tau}^n(t) \Rightarrow t$ as $n\rightarrow\infty$,
for $t\ge 0$.
Then, by the random time change lemma  in \cite[Page 151]{Patrick-99},
we deduce that the last four terms on the right-hand side  of \cref{PT3.1B} converge
to $0$ in
distribution. It follows by \cref{T3.1}\,(i) and \cref{PT3.1B} that 
\begin{equation*}
\Hat{L}^n(\breve{\tau}^n(t) + r)
-  \Hat{L}^n\bigl(\breve{\tau}^n(t)\bigr) \,\Rightarrow\, 
\lambda L_{t+r} - \lambda L_{t} \quad \text{in }\Rd\,.
\end{equation*}
Repeating the same argument we establish convergence of $\Hat{S}^n$ and $\Hat{R}^n$. 
Proving that $U$ is non-anticipative 
follows exactly as in \cite{AMR04}*{Lemma 6}.
This completes the proof of (iii).
\end{proof}

\section{Proofs of \texorpdfstring{\cref{L4.1,L5.2}}{}} \label{AppB}
In this section, we construct two functions,
which are used to show the ergodicity of $\widetilde{\Xi}^n$.
We provide two lemmas concerning the properties of these
functions, respectively.
The proofs of \cref{L4.1,L5.2} are given at the end of this section.

\begin{definition}
For $z^n \in\fZsm^n$, 
define the operator $\Lg^{z^n}_n\colon \Cc_b(\Rd\times\Rd) \to \Cc_b(\Rd\times\Rd)$ by
\begin{equation}\label{EDB.1A}
\begin{aligned}
\Lg^{z^n}_nf(\Breve{x},h) &\,\df\, 
\sum_{i\in\sI}\frac{\partial f(\Breve{x},h)}{\partial h_i} +  
\sum_{i\in\sI}  r_{i}^n(h_i)\bigl(f(\Breve{x} +  e_i, h-h_i\,e_i) 
- f(\Breve{x},h)\bigr) \\
&\qquad+ \sum_{i\in\sI}\mu_i^n z^n_i\bigl(f(\Breve{x} - e_i,h) 
- f(\Breve{x},h)\bigr) 
+ \sum_{i\in\sI}\gamma_i^n q^n_i\bigl(f(\Breve{x} - e_i,h) - f(\Breve{x},h)\bigr)
\end{aligned}
\end{equation}
for $f\in\Cc_b(\Rd\times\Rd)$ and any $(\Breve{x},h)\in\RR^d_+\times\RR^d_+$, 
with $q^n \df \Breve{x} - z^n$. 
\end{definition}
Note that if $d_1^n \equiv 0$ for all $n$,
the queueing system has no interruptions.
In this situation, under a Markov scheduling policy,
the (infinitesimal) generator of $(X^n,H^n)$ takes the form of \cref{EDB.1A}.
Recall the scheduling policies $\Check{z}^n$ in \cref{D4.1}, and
 $\Bar{x} = \Breve{x} - n\rho$ in \cref{D4.2}.
We define the sets 
\begin{equation*}
\Tilde{\cK}_n(\Breve{x}) \,\df\, \biggl\{i\in\sI_0 \colon \Breve{x}_i \,\ge\, 
\frac{n\rho_i}{\sum_{j\in\sI_0}\rho_j}\biggr\}
\,=\, \biggl\{i\in\sI_0 \colon \Bar{x}_i \,\ge\, 
\frac{n\rho_i\sum_{j\in\sI\setminus\sI_0}\rho_j}{\sum_{j\in\sI_0}\rho_j}\biggr\}\,.
\end{equation*}
We have the following lemma.

\begin{lemma}\label{LB.1}
Grant \cref{A2.1,A2.2,A3.2}.
For any even integer $\upkappa\ge2$, there exist 
a positive vector $\xi\in\RR^d_+$, $\Breve{n}\in\NN$, and 
positive constants  $\Breve{C}_0$ and $\Breve{C}_1$,
such that the functions $f_n$, $n\in\NN$, defined by 
\begin{equation}\label{ELB.1A}
f_n(\Breve{x},h) \,\df\, \sum_{i\in\sI}\xi_i \abs{\Bar{x}_i}^\upkappa
+ \sum_{i\in\sI}\eta^n_i(h_i) \xi_i
\bigl(\abs{\Bar{x}_i+1}^\upkappa - \abs{\Bar{x}_i}^\upkappa\bigr)
\quad \forall\, (\Breve{x},h)\in\RR^d_+\times\RR^d_+\,,
\end{equation}
with $\eta^n_i$ as defined in \cref{E-eta}, satisfy
\begin{equation}\label{ELB.1B}
\begin{aligned}
\Lg^{\Check{z}^n}_nf_n(\Breve{x},h) \,\le\, \Breve{C}_0
n^{\nicefrac{\upkappa}{2}} 
- \Breve{C}_1\sum_{i\in\sI\setminus\Tilde{\cK}_n(\Breve{x})}\xi_i \abs{\Bar{x}_i}^\upkappa
- \Breve{C}_1\sum_{i\in\Tilde{\cK}_n(\Breve{x})}\bigl(\mu^n_i(\Check{z}^n_i - n\rho_i)
+\gamma^n_i\Check{q}^n_i\bigr)\abs{\Bar{x}_i}^{\upkappa-1} \\
+ \sum_{i\in\sI}
\bigl(\order(\sqrt{n})\order\bigl(\abs{\Bar{x}_i}^{\upkappa-1}\bigr) 
+ \order(n)\order\bigl(\abs{\Bar{x}_i}^{\upkappa-2})\bigr)
\end{aligned}
\end{equation}
for all $n\ge \Breve{n}$ and $(\Breve{x},h)\in\RR^d_+\times\RR^d_+$.
\end{lemma}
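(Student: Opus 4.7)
The plan is to apply the generator $\Lg^{\Check z^n}_n$ of \eqref{EDB.1A} directly to $f_n$, which I rewrite in the form $f_n(\Breve x,h)=g(\Breve x)+\sum_i\eta^n_i(h_i)\bigl(g(\Breve x+e_i)-g(\Breve x)\bigr)$ with $g(\Breve x):=\sum_i\xi_i|\bar x_i|^\upkappa$. The Lyapunov function is tailored so that the identity \eqref{ES4.2A} allows the unbounded hazard rate $r^n_i(h_i)$ arising from the renewal-arrival piece of $\Lg^{\Check z^n}_n$ to be exchanged for the bounded arrival rate $\lambda^n_i$, in the spirit of \cite{Takis-99}. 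Using $\eta^n_i(0)=0$ (which follows from $\Exp[G_i]=1$), together with the fact that $g$ is separable in the coordinates---so that the cross differences $[g(\Breve x+e_i+e_j)-g(\Breve x+e_i)]-[g(\Breve x+e_j)-g(\Breve x)]$ vanish for $i\neq j$---the renewal-arrival contribution collapses to $\sum_i\lambda^n_i\xi_i(|\bar x_i+1|^\upkappa-|\bar x_i|^\upkappa)$, with no residual dependence on $r^n_i$ or $h$.

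Next I add the service and abandonment contributions. A short computation gives $f_n(\Breve x-e_i,h)-f_n(\Breve x,h)=\xi_i(|\bar x_i-1|^\upkappa-|\bar x_i|^\upkappa)+\eta^n_i(h_i)\xi_i\bigl[2|\bar x_i|^\upkappa-|\bar x_i+1|^\upkappa-|\bar x_i-1|^\upkappa\bigr]$. The $\eta^n_i$-weighted bracket is a second difference of size $\order(|\bar x_i|^{\upkappa-2})$; combined with the $\order(n)$ scale of $\mu^n_i\Check z^n_i+\gamma^n_i\Check q^n_i$ and the boundedness of $\eta^n_i$ guaranteed by \cref{EA3.2A}, this contribution is absorbed into the $\order(n)\order(|\bar x_i|^{\upkappa-2})$ remainder in \eqref{ELB.1B}. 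Taylor-expanding $|\bar x_i\pm 1|^\upkappa-|\bar x_i|^\upkappa=\pm\upkappa|\bar x_i|^{\upkappa-1}\operatorname{sign}(\bar x_i)+\order(|\bar x_i|^{\upkappa-2})$ and invoking \cref{A2.1} to write $\lambda^n_i=n\mu^n_i\rho_i+\order(\sqrt n)$ produces
\begin{equation*}
\begin{aligned}
\Lg^{\Check z^n}_n f_n(\Breve x, h) &\,=\, -\sum_i \upkappa\xi_i|\bar x_i|^{\upkappa-1}\operatorname{sign}(\bar x_i)\bigl(\mu^n_i(\Check z^n_i-n\rho_i)+\gamma^n_i\Check q^n_i\bigr) \\
&\qquad+\sum_i\Bigl(\order(\sqrt n)\order(|\bar x_i|^{\upkappa-1})+\order(n)\order(|\bar x_i|^{\upkappa-2})\Bigr).
\end{aligned}
\end{equation*}

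It remains to verify that the first sum provides the claimed drift on each of the two regions. When $i\in\Tilde\cK_n(\Breve x)\subset\sI_0$, the scheduler of \cref{D4.1} holds $\Check z^n_i$ near $\lfloor n\rho_i/\sum_{j\in\sI_0}\rho_j\rfloor$; since $\sI\setminus\sI_0\neq\varnothing$ by the standing hypothesis $\inf_n\gamma^n_d>0$, we have $\sum_{j\in\sI_0}\rho_j<1$, so $\mu^n_i(\Check z^n_i-n\rho_i)$ is nonnegative and of order $n$. With $\operatorname{sign}(\bar x_i)=+1$ on this regime, this yields the targeted $-\Breve C_1(\mu^n_i(\Check z^n_i-n\rho_i)+\gamma^n_i\Check q^n_i)|\bar x_i|^{\upkappa-1}$ term. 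For $i\in\sI\setminus\Tilde\cK_n$, a split on whether $i\in\sI_0$ (where $\Check z^n_i$ essentially serves all class-$i$ customers, giving $\mu^n_i(\Check z^n_i-n\rho_i)\approx\mu^n_i\bar x_i$) or $i\in\sI\setminus\sI_0$ (where $\gamma^n_i>0$ forces drift through the static-priority mechanism) shows that $\operatorname{sign}(\bar x_i)\bigl(\mu^n_i(\Check z^n_i-n\rho_i)+\gamma^n_i\Check q^n_i\bigr)\ge c|\bar x_i|$, yielding the $-\Breve C_1\xi_i|\bar x_i|^\upkappa$ contribution.

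The main obstacle is the selection of the weight vector $\xi\in\RR^d_+$. The static-priority rule couples the coordinates of $\sI\setminus\sI_0$: a low-priority class is served only when every higher-priority queue is empty, so on any individual coordinate the naive bound $\mu^n_i(\Check z^n_i-n\rho_i)\ge c\mu^n_i\bar x_i$ can fail, and the cross-contributions must be reorganized. Following the Perron--Frobenius style construction of \cite{ABP15, AMR04}, I would choose $\xi$ so that upon summing over $i$ the off-diagonal flows cancel and only the negative diagonal drift survives on $\sI\setminus\Tilde\cK_n$. A concluding localization step absorbs the residual $\order(\sqrt n)|\bar x|^{\upkappa-1}$ and $\order(n)|\bar x|^{\upkappa-2}$ error terms, together with the bounded contributions in a neighborhood of $\bar x=0$, into the constant $\Breve C_0 n^{\upkappa/2}$ by Young's inequality.
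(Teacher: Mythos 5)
Your proposal follows essentially the same route as the paper's own proof, so I will be brief. Both start from the generator $\Lg^{\Check z^n}_n$ in \cref{EDB.1A} applied to $f_n$, both exploit the identity \cref{ES4.2A} to trade the hazard rate $r^n_i$ for the arrival rate $\lambda^n_i$, both use $\eta^n_i(0)=0$, both arrive at a leading term $-\upkappa\xi_i(\bar x_i)^{\upkappa-1}\bigl(\mu^n_i(\Check z^n_i-n\rho_i)+\gamma^n_i\Check q^n_i\bigr)$ after invoking $\lambda^n_i - n\mu^n_i\rho_i = \order(\sqrt n)$, and both defer the choice of $\xi$ to the construction in \cite{ABP15} (the paper picks $\xi_1=1$ and a geometrically decaying recursion rather than a literal Perron--Frobenius eigenvector, but this is cosmetic). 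Your bookkeeping of the renewal-arrival block is somewhat tidier: writing $f_n(\Breve x,h)=g(\Breve x)+\sum_i\eta^n_i(h_i)\bigl(g(\Breve x+e_i)-g(\Breve x)\bigr)$ and observing that the cross second differences $\bigl[g(\Breve x+e_i+e_j)-g(\Breve x+e_i)\bigr]-\bigl[g(\Breve x+e_j)-g(\Breve x)\bigr]$ vanish for $i\ne j$ by separability lets you jump directly to $\sum_i\lambda^n_i\xi_i(|\bar x_i+1|^\upkappa-|\bar x_i|^\upkappa)$, whereas the paper writes out all six terms of \cref{PLB.1B} and then combines them via \cref{ES4.2A}; the outcome is identical. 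You also correctly identify and defuse the trap in the per-coordinate drift bound for $i\in\sI\setminus\sI_0$ under static priority, which is exactly why the $\xi$ recursion is needed.

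One inaccuracy worth flagging: your concluding sentence claims the $\order(\sqrt n)\order(|\bar x_i|^{\upkappa-1})+\order(n)\order(|\bar x_i|^{\upkappa-2})$ remainder is absorbed into $\Breve C_0 n^{\upkappa/2}$ by Young's inequality. That is a stronger estimate than \cref{ELB.1B}, which deliberately keeps those remainder terms explicit. In fact the absorption you describe does go through---on $\sI\setminus\Tilde\cK_n(\Breve x)$ one borrows an $\epsilon$-fraction of $|\bar x_i|^\upkappa$, and on $\Tilde\cK_n(\Breve x)$ both remainders are dominated by $n|\bar x_i|^{\upkappa-1}$ since $\bar x_i\ge cn$ there by \cref{PLB.1D}---but the paper retains them unabsorbed because they are needed in their raw form when \cref{ELB.1B} is combined with the $\Tilde g_n$ contribution in the proof of \cref{L4.1}. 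So your stronger claim is correct but proves something the lemma does not state; if you intend to reuse \cref{ELB.1B} downstream, keeping the error terms explicit is the better formulation.
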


\begin{proof}
Using the estimate
\begin{equation}\label{PLB.1A}
(a \pm 1)^m - a^{\upkappa} \,=\, \pm \upkappa a^{\upkappa-1} + \order(a^{\upkappa-2}) 
\qquad \forall\, a\in\RR\,,
\end{equation}
an easy calculation shows that
\begin{equation}\label{PLB.1B}
\begin{aligned}
\Lg^{\Check{z}^n}_n f_n(\Breve{x},h) &\,=\, \sum_{i\in\sI}
\dot\eta^n_i(h_i)
\xi_i\bigl(\abs{\Bar{x}_i+1}^\upkappa - \abs{\Bar{x}_i}^\upkappa\bigr)
+ \sum_{i\in\sI}r^n_i(h_i)\eta^n_i(0)\xi_i
\bigl((\Bar{x}_i+2)^{\upkappa} - (\Bar{x}_i+1)^{\upkappa}\bigr)\\
&\qquad 
- \sum_{i\in\sI}r^n_i(h_i)\eta^n_i(h_i)\xi_i
\bigl(\abs{\Bar{x}_i+1}^\upkappa - \abs{\Bar{x}_i}^\upkappa\bigr)\\
&\qquad + \sum_{i\in\sI} \eta^n_i(h_i)(\mu^n_i\Check{z}^n_i 
+ \gamma^n_i\Check{q}^n_i)\order(\abs{\Bar{x}_i}^{\upkappa-2})
+ \sum_{i\in\sI}r^n_i(h_i)\xi_i(\abs{\Bar{x}_i + 1}^\upkappa -
\abs{\Bar{x}_i}^\upkappa)\\
&\qquad
+ \sum_{i\in\sI}(\mu^n_i\Check{z}^n_i + \gamma^n_i\Check{q}^n_i)\xi_i
(\abs{\Bar{x}_i - 1}^\upkappa - \abs{\Bar{x}_i}^\upkappa)\,,
\end{aligned}
\end{equation}
where for the fourth term on the right-hand side we also used the fact that
\begin{equation*}
\bigl(\abs{\Bar{x}_i}^\upkappa - \abs{\Bar{x}_i - 1}^\upkappa\bigr) 
- \bigl(\abs{\Bar{x}_i+1}^\upkappa - \abs{\Bar{x}_i}^\upkappa \bigr) \,=\, 
\order(\abs{\Bar{x}_i}^{\upkappa-2})\,.
\end{equation*}
It is clear that $\eta^n_i(0) = 0$, since $F_i(0) = 0$ and $\Exp[G_i] = 1$.
On the other hand, $\eta^n_i(t)$ is bounded for all $n\in\NN$ and $t\ge 0$
by \cref{A3.2}. 
Thus, applying \cref{PLB.1A,PLB.1B,ES4.2A}, it follows that
\begin{equation}\label{PLB.1C}
\begin{aligned}
\Lg^{\Check{z}^n}_n f_n(\Breve{x},h) \,=\, \sum_{i\in\sI} \bigl[\xi_i
(\lambda^n_i - \mu_i^n\Check{z}^n_i - \gamma^n_i\Check{q}^n_i)
\bigl(\upkappa(\Bar{x}_i)^{\upkappa-1} + \order(\abs{\Bar{x}_i}^{\upkappa-2})\bigr)\\
+ \eta^n_i(h_i)(\mu^n_i\Check{z}^n_i + \gamma^n_i\Check{q}^n_i)
\order(\abs{\Bar{x}_i}^{\upkappa-2})\bigr]\,.
\end{aligned}
\end{equation}
Since $\eta^n_i(h_i)$ is uniformly bounded, and
$\Check{z}^n_i, \Check{q}^n_i \le \Bar{x}_i + n\rho_i$,
it follows that the last term in \cref{PLB.1C} is equal to 
$\order(n)\order(\abs{\Bar{x}_i}^{\upkappa-2}) 
+ \order(\abs{\Bar{x}_i}^{\upkappa-1})$.
Note that for $i\in\sI\setminus\sI_0$, $\Check{z}^n_i$ is equivalent to the 
static priority scheduling policy. 
Note also, that
\begin{equation}\label{PLB.1D}
\Bar{x}_i \,\ge\, \Check{z}^n_i - n\rho_i \,\ge\, 
\frac{n\rho_i\sum_{j\in\sI\setminus\sI_0}\rho_j}{\sum_{j\in\sI_0}\rho_j} \,>\, 0
\qquad \forall\,i\in\Tilde{\cK}_n(\Breve{x})\,,
\end{equation}
and for $i\in\sI_0 \setminus\Tilde{\cK}_n(\Breve{x})$, 
we have $\Check{z}^n_i - n\rho_i = \Bar{x}_i$ and $\Check{q}^n_i = 0$.
By using \cref{PLB.1C}, and the identity in \cref{PL5.4B}, 
we obtain
\begin{equation}\label{PLB.1E}
\begin{aligned}
\Lg^{\Check{z}^n}_n f_n(\Breve{x},h) &\,\le\, 
\sum_{i\in\sI\setminus\sI_0}\xi_i
\bigl(-\mu^n_i \Bar{x}_i
+ (\mu^n_i - \gamma^n_i) \Check{q}^n_i\bigr)m(\Bar{x}_i)^{\upkappa-1}\\
&\mspace{50mu} 
- \sum_{i\in\Tilde{\cK}_n(\Breve{x})}\xi_i\bigl(\mu^n_i
(\Check{z}^n_i - n\rho_i) 
+ \gamma^n_i\Check{q}^n_i\bigr) \abs{\Bar{x}_i}^{\upkappa-1} \\
&\mspace{100mu}
- \sum_{i\in\sI_0\setminus\Tilde{\cK}_n(\Breve{x})} 
\xi_i\mu^n_i\abs{\Bar{x}_i}^\upkappa
+ \sum_{i\in\sI}\bigl(\order(\sqrt{n})\order(\abs{\Bar{x}_i}^{\upkappa-1}) 
+ \order(n)\order(\abs{\Bar{x}_i}^{\upkappa-2})\bigr)\,.
\end{aligned}
\end{equation}
Let $\breve{c}_1\df \sup_{i,n}\{\gamma^n_i,\mu^n_i\}$,
and $\breve{c}_2$ be some constant such that
$\inf\{\mu_i^n,\gamma^n_j \colon i\in\sI,j\in \sI\setminus\sI_0, n \in\NN \} 
\ge \breve{c}_2 > 0$. 
We select a positive vector $\xi\in\RR^d_+$ such that
$\xi_1 \df 1$,
$\xi_{i}\df\frac{\kappa^m_1}{d^\upkappa}
\min_{i^{\prime}\le i-1}\xi_{i^{\prime}}$, $i\ge2$, with 
$\kappa_1 \df \frac{\breve{c}_1}{8\breve{c}_2}$. 
Compared to \cite[Lemma 5.1]{ABP15}, 
the important difference here is that, for $i\in\sI\setminus\sI_0$, we have
\begin{equation*}
\Check{q}^n_i \,=\, \Biggl(\Breve{x}_i - \biggl(n - \sum_{j\in\Tilde{\cK}_n(\Breve{x})}\Check{z}^n_j
- \sum_{j\in\sI_0\setminus\Tilde{\cK}_n(\Breve{x})}x_j
- \sum_{j=\abs{\sI_0}+1}^{i-1}x_j\biggr)^+\Biggr)^+\,.
\end{equation*}
Repeating the argument in the proof of \cite[Lemma 5.1]{ABP15},
it follows by \cref{PLB.1E} that
\begin{equation}\label{PLB.1H}
\begin{aligned}
\Lg^{\Check{z}^n}_n f_n(\Breve{x},h) &\,\le\, c_3n^{\nicefrac{\upkappa}{2}} 
- c_4 \sum_{i\in\sI\setminus\Tilde{\cK}_n(\Breve{x})}\xi_i\abs{\Bar{x}_i}^\upkappa
- c_5\sum_{i\in\Tilde{\cK}_n(\Breve{x})}\xi_i\bigl(\mu^n_i(\Check{z}^n_i -n\rho_i)
+ \gamma^n_i\Check{q}^n_i\bigr)\abs{\Bar{x}_i}^{\upkappa-1} \\
&\mspace{20mu}+ \frac{c_5}{2}
\sum_{i\in\Tilde{\cK}_n(\Breve{x})}\xi_i\mu^n_i
\bigl(\Check{z}^n_i -n\rho_i\bigr)^\upkappa
+ \sum_{i\in\sI}\bigl(\order(\sqrt{n})\order(\abs{\Bar{x}_i}^{\upkappa-1}) 
+ \order(n)\order(\abs{\Bar{x}_i}^{\upkappa-2})\bigr)
\end{aligned}
\end{equation}
for some positive constants $c_3$, $c_4$ and $c_5$.
Therefore, \cref{ELB.1B} follows by \cref{PLB.1D,PLB.1H},
and this completes the proof.
\end{proof}
 

Let
\begin{equation}\label{ESBB}
\Tilde{g}_n(\Breve{x},h,\psi,k) \,\df\,
\frac{\psi + \upalpha^n(k)}{\vartheta^n}
\sum_{i\in\sI}\mu_i^n\xi_i\Bigl(\Tilde{g}_{n,i}(\Breve{x}_i)
+ \eta^n_i(h_i)\bigl(\Tilde{g}_{n,i}(\Breve{x}_i + 1) 
- \Tilde{g}_{n,i}(\Breve{x}_i)\bigr)\Bigr)
\end{equation}
for $(\Breve{x},h,\psi,k)\in\fD$,
where $\Tilde{g}_{n,i}(\Breve{x}_i) 
\df - \abs{\Bar{x}_i}^{\upkappa}$ for $i\in\sI\setminus\sI_0$, and
\begin{equation*}
\Tilde{g}_{n,i}(\Breve{x}_i) \,\df\, \begin{cases}
- \abs{\Bar{x}_i}^\upkappa\,, &\quad \text{if } \Bar{x}_i \,<\,
\frac{n\rho_i\sum_{j\in\sI\setminus\sI_0}\rho_j}{\sum_{j\in\sI_0}\rho_j}\,, \\
- \frac{n\rho_i\sum_{j\in\sI\setminus\sI_0}\rho_j}
{\sum_{j\in\sI_0}\rho_j}\abs{\Bar{x}_i}^{\upkappa-1}\,, &\quad \text{if } 
\Bar{x}_i \,\ge\,
\frac{n\rho_i\sum_{j\in\sI\setminus\sI_0}\rho_j}{\sum_{j\in\sI_0}\rho_j}\,.
\end{cases}
\quad \forall\,i\in\sI_0\,.
\end{equation*}
Recall $\overline{\Lg}^{z^n}_{n,\psi}$ in \cref{ES4.3B}. 
We also define 
\begin{equation*}
\overline{q}^{n,k}_i(\Breve{x},z^n) \,=\, \int_{\RR_*}
q^n_i\bigl(\Breve{x} - n\upmu^n(y - k),z^n\bigr)\,
\Tilde{F}^{d^n_1}_{\Breve{x},k}(\D{y})\,.
\end{equation*}

\begin{lemma}\label{LB.2}
Grant \cref{A2.1,A2.2,A3.2},
and let $\xi\in\RR^d_+$ be as in \cref{ELB.1A}.
Then, for any even integer $\upkappa \ge 2$ 
and any $\varepsilon>0$,  
there exist a positive constant $\overline{C}$, and $\Bar{n}\in\NN$,
such that
\begin{equation}
\begin{aligned}\label{ELB.2A}
\overline{\Lg}^{{z}^n}_{n,\psi}\, \Tilde{g}_n(\Breve{x},h,\psi,k) &\,\le\,
\overline{C} n^{\nicefrac{\upkappa}{2}} 
+\varepsilon 
\sum_{i\in\sI\setminus\Tilde{\cK}_n(\Breve{x})}\abs{\Bar{x}_i}^{\upkappa}
+ \sum_{i\in\Tilde{\cK}_n(\Breve{x})}
\order\bigl(\abs{\Bar{x}_i}^{\upkappa-1}\bigr) \\ 
&\mspace{-70mu}+ \frac{1}{\sqrt{n}}\sum_{i\in\Tilde{\cK}_n(\Breve{x})}
\bigl(\psi \mu^n_i(\abs{{z}^n_i - n\rho_i}) + \psi\gamma^n_i {q}^n_i 
+ (1 - \psi)\gamma^n_i\overline{q}^{n,k}_i\bigr)
\order\bigl(\abs{\Bar{x}_i}^{\upkappa-1}\bigr) 
\end{aligned}
\end{equation}
for any $z^n\in\fZsm^n$, and all $(\Breve{x},h,\psi,k)\in\fD$ and  $n>\Bar{n}$.
\end{lemma}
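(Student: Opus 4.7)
The plan is to compute $\overline{\Lg}^{z^n}_{n,\psi}\Tilde{g}_n$ directly by applying each piece of the operator in \eqref{ES4.3B} to $\Tilde{g}_n$. Since $\overline{\Lg}^{z^n}_{n,\psi}$ only jumps or differentiates in the $(\Breve{x},h)$ coordinates and leaves $(\psi,k)$ fixed, the multiplier $\Phi^n(\psi,k)\df(\psi+\upalpha^n(k))/\vartheta^n$ factors out of the operator. I will write
\[
\Tilde{g}_n(\Breve{x},h,\psi,k) \,=\, \Phi^n(\psi,k)\sum_{i\in\sI}\mu^n_i\xi_i\, G_{n,i}(\Breve{x}_i,h_i)\,,
\]
with $G_{n,i}(\Breve{x}_i,h_i)\df \Tilde{g}_{n,i}(\Breve{x}_i)+\eta^n_i(h_i)(\Tilde{g}_{n,i}(\Breve{x}_i+1)-\Tilde{g}_{n,i}(\Breve{x}_i))$, and note that $\Phi^n(\psi,k)=\order(n^{\nicefrac{-1}{2}})$ uniformly in $(\psi,k)$ by \cref{A2.2} and the boundedness of $\upalpha^n$ guaranteed by \cref{A3.2}.

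The first key step handles the age-derivative and hazard-rate shift pieces. Using $\eta^n_i(0)=0$ and identity \eqref{ES4.2A}, these telescope componentwise to
\[
\partial_{h_i}G_{n,i}(\Breve{x}_i,h_i)+r^n_i(h_i)\bigl(G_{n,i}(\Breve{x}_i+1,0)-G_{n,i}(\Breve{x}_i,h_i)\bigr)\,=\,\lambda^n_i\bigl(\Tilde{g}_{n,i}(\Breve{x}_i+1)-\Tilde{g}_{n,i}(\Breve{x}_i)\bigr)\,,
\]
so the hazard rates $r^n_i$ drop out, exactly as in the proof of \cref{LB.1}. Combining with the service/abandonment pieces $\bigl(\psi(\mu^n_i z^n_i+\gamma^n_i q^n_i)+(1-\psi)\gamma^n_i\overline{q}^{n,k}_i\bigr)\bigl(G_{n,i}(\Breve{x}_i-1,h_i)-G_{n,i}(\Breve{x}_i,h_i)\bigr)$ and the down-state drift $-(1-\psi)n\rho_i\mu^n_i\partial_{\Breve{x}_i}G_{n,i}$, and using $\lambda^n_i-n\rho_i\mu^n_i=\order(\sqrt n)$ from \cref{A2.1}, the renewal coefficient $\lambda^n_i$ nearly cancels the drift when $\psi=0$, leaving an $\order(\sqrt n)$ remainder. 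The net coefficient in front of each $\Tilde{g}_{n,i}$-difference reduces to $\psi[\mu^n_i(z^n_i-n\rho_i)+\gamma^n_i q^n_i]+(1-\psi)\gamma^n_i\overline{q}^{n,k}_i$ plus an $\order(\sqrt n)$ scheduling-independent piece, all scaled by $\Phi^n=\order(n^{\nicefrac{-1}{2}})$.

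It then remains to split according to the definition of $\Tilde{g}_{n,i}$. For $i\in\sI\setminus\Tilde{\cK}_n(\Breve{x})$, where $\Tilde{g}_{n,i}(\Breve{x}_i)=-|\Bar{x}_i|^\upkappa$, every first difference and derivative is $\order(|\Bar{x}_i|^{\upkappa-1})$ by \eqref{PLB.1A}; multiplying by $\Phi^n\mu^n_i\xi_i$ and the $\order(\sqrt n)$ rate terms, then applying Young's inequality as in the proof of \cref{LB.1}, produces the $\overline{C}n^{\upkappa/2}+\varepsilon|\Bar{x}_i|^\upkappa$ contributions in \eqref{ELB.2A}. For $i\in\Tilde{\cK}_n(\Breve{x})$, the truncated form $\Tilde{g}_{n,i}(\Breve{x}_i)=-c^*_{n,i}|\Bar{x}_i|^{\upkappa-1}$ with $c^*_{n,i}\df n\rho_i\sum_{j\in\sI\setminus\sI_0}\rho_j/\sum_{j\in\sI_0}\rho_j=\order(n)$ yields differences of order $c^*_{n,i}|\Bar{x}_i|^{\upkappa-2}$; since $|\Bar{x}_i|\ge c^*_{n,i}$ on this set, the estimate $|\Bar{x}_i|^{\upkappa-2}\le(c^*_{n,i})^{-1}|\Bar{x}_i|^{\upkappa-1}$ turns $\Phi^n\cdot c^*_{n,i}|\Bar{x}_i|^{\upkappa-2}$ into $\order(n^{\nicefrac{-1}{2}})|\Bar{x}_i|^{\upkappa-1}$. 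Pairing with the scheduling-dependent rates gives the $\frac{1}{\sqrt n}[\psi\mu^n_i|z^n_i-n\rho_i|+\psi\gamma^n_i q^n_i+(1-\psi)\gamma^n_i\overline{q}^{n,k}_i]\order(|\Bar{x}_i|^{\upkappa-1})$ contribution, and the scheduling-independent $\order(\sqrt n)$ remainders are absorbed into the $\order(|\Bar{x}_i|^{\upkappa-1})$ sum over $\Tilde{\cK}_n$.

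The main obstacle is careful bookkeeping across the several contributions of $\overline{\Lg}^{z^n}_{n,\psi}$ on both regimes of $\Tilde{g}_{n,i}$ while preserving the absolute values and signed rates that appear in \eqref{ELB.2A}, and in particular keeping track of the subleading $\eta^n_i$-correction terms, which are of order $|\Bar{x}_i|^{\upkappa-2}$ and must be absorbed into the lower-order summands or into the constant $\overline{C}n^{\upkappa/2}$. The crucial quantitative step is verifying that on $\Tilde{\cK}_n(\Breve{x})$ the inequality $|\Bar{x}_i|^{\upkappa-2}\le(c^*_{n,i})^{-1}|\Bar{x}_i|^{\upkappa-1}$ converts the $\order(n)$ penalty coming from the truncation constant $c^*_{n,i}$ into the $1/\sqrt n$ prefactor multiplying the queueing and service rates.
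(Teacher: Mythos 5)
Your proof takes essentially the same route as the paper: writing $\Tilde{g}_n = \frac{\psi+\upalpha^n(k)}{\vartheta^n}\sum_i\mu^n_i\xi_iG_{n,i}$ and factoring out the $(\psi,k)$-dependent scalar, using \eqref{ES4.2A} together with $\eta^n_i(0)=0$ to cancel the hazard rates $r^n_i$, invoking \eqref{PL5.4B} for the scheduling-independent cancellation, splitting according to $\Tilde{\cK}_n(\Breve{x})$, using $|\Bar{x}_i|\ge c^*_{n,i}$ on $\Tilde{\cK}_n$ to trade the $\order(n)$ truncation constant for a $1/\sqrt n$ prefactor, and closing with Young's inequality — this is exactly the calculation the paper performs in \eqref{PLB.2B}--\eqref{PLB.2C} and the display that follows. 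One small imprecision worth flagging: for $i\in\sI\setminus\Tilde{\cK}_n(\Breve{x})$ the scheduling-dependent rates $\mu^n_i|z^n_i-n\rho_i|$, $\gamma^n_iq^n_i$ and $\gamma^n_i\overline{q}^{n,k}_i$ are not $\order(\sqrt n)$ but rather $\order(n)+\order(|\Bar{x}_i|)$ (via $z^n_i\le n$ and $q^n_i,\overline{q}^{n,k}_i\le\Breve{x}_i=n\rho_i+\Bar{x}_i$), yet after the $\order(n^{-\nicefrac{1}{2}})$ scaling these are still absorbed by Young's inequality into $\varepsilon|\Bar{x}_i|^\upkappa+\overline{C}n^{\nicefrac{\upkappa}{2}}$, so the conclusion stands.
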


\begin{proof}
It is straightforward to verify that
\begin{equation}\label{PLB.2B}
\begin{aligned}
&\abs{{g}_{n,i}(\Breve{x}_i \pm 1) - {g}_{n,i}(\Breve{x}_i)} \,=\,
\order(\abs{\Bar{x}_i}^{\upkappa-1})\,, \\
&\abs{\bigl({g}_{n,i}(\Breve{x}_i) - {g}_{n,i}(\Breve{x}_i - 1)\bigr)
- \bigl({g}_{n,i}(\Breve{x}_i + 1) - {g}_{n,i}(\Breve{x}_i)\bigr)} 
\,=\, \order(\abs{\Bar{x}_i}^{\upkappa-2}) \,,
\end{aligned}
\end{equation} 
for $i\in\sI$.
Repeating the calculation in \cref{PLB.1B,PLB.1C},
and applying \cref{PLB.1A,PLB.2B}, we have
\begin{equation}\label{PLB.2C}
\begin{aligned}
&\overline{\Lg}^{{{z}}^n}_{n,\psi}\Tilde{g}_{n}(\Breve{x},h,\psi,k) \,\le\, 
\frac{\psi + \upalpha^n(k)}{\vartheta^n} \\
&\mspace{50mu}
\Biggl[\sum_{i\in\Tilde{\cK}_n(\Breve{x})}\mu^n_i\xi_i \Bigl[
\bigl(\abs{\lambda^n_i - n\mu^n_i\rho_i} + \psi\mu^n_i\abs{{z}^n_i - n\rho_i}
+ \psi\gamma_i^n {q}^n_i + (1 - \psi)\gamma^n_i\overline{q}^{n,k}_i\bigr)
\order(\abs{\Bar{x}_i}^{\upkappa-1}) \\
&\mspace{200mu}
+ \eta^n_i(h_i)\bigl(\psi\mu^n_i {z}^n_i + \psi\gamma^n_i {q}^n_i +
(1 - \psi)\gamma^n_i\overline{q}^{n,k}_i\bigr)
 \order(\abs{\Bar{x}_i}^{\upkappa-2})\Bigr] \\ 
&\mspace{50mu}
+ \sum_{i\in\sI\setminus\Tilde{\cK}_n(\Breve{x})}\mu^n_i\xi_i\Bigl[
\bigl(\lambda^n_i + (1 - \psi) n\mu^n_i\rho_i  \\
&\mspace{200mu}
+ \bigl(1 + \eta^n_i(h_i)\bigr)
(\psi\mu^n_i {z}^n_i + \psi\gamma^n_i {q}^n_i
+ (1 - \psi)\gamma^n_i\overline{q}^{n,k}_i \bigr)
\order\bigl(\abs{\Bar{x}_i}^{\upkappa-1}\bigr)\Bigr]\Biggr]\,.
\end{aligned}
\end{equation}
Note that $\overline{q}^{n,k}_i \le c(1 + \langle e,\Bar{x}\rangle^{+})$ 
for some positive constant $c$, by \cref{PL5.4C}.
Since ${z}^n_i, {q}^n_i \le \Bar{x}_i + n\rho_i$,
$(\vartheta^n)^{-1}$ is of order $n^{-\nicefrac{1}{2}}$ by \cref{A2.2},
and $\eta^n_i$ and $\upalpha^n$ are bounded,
it follows by \cref{PL5.4B,PLB.2C} that
\begin{equation*}
\begin{aligned}
&\overline{\Lg}^{ {z}^n}_{n,\psi}\Tilde{g}_{n}(\Breve{x},h,\psi,k) \,\le\, 
\sum_{i\in\sI\setminus\Tilde{\cK}_n(\Breve{x})}\frac{1}{\sqrt{n}}
\bigl(\order(n)\order(\abs{\Bar{x}_i}^{\upkappa-1})
+ \order(\abs{\Bar{x}_i}^\upkappa) \bigr) 
+ \sum_{i\in\Tilde{\cK}_n(\Breve{x})}\order(\sqrt{n})
\order(\abs{\Bar{x}_i}^{\upkappa-2})\\
&\mspace{50mu}
+ \sum_{i\in\Tilde{\cK}_n(\Breve{x})}
\frac{1}{\sqrt{n}}\bigl(\order(\sqrt{n}) + \psi\mu^n_i\abs{{z}^n_i - n\rho_i}
+ \psi\gamma_i^n {q}^n_i + (1 - \psi)\gamma^n_i\overline{q}^{n,k}_i\bigr)
\order(\abs{\Bar{x}_i}^{\upkappa-1})\,.
\end{aligned} 
\end{equation*}
Thus, applying Young's inequality, we obtain \cref{ELB.2A}, and this completes the proof.
\end{proof}

\begin{proof}[Proof of \cref{L4.1}]
We define the function 
$\Tilde{f}_n \in \Cc(\Rd\times\RR^d_+\times\{0,1\}\times\RR_+)$ by
\begin{equation*}
\Tilde{f}_n(\Breve{x},h,\psi,k) \,\df\, 
f_n(\Breve{x},h) + \tilde{g}_n(\Breve{x},h,\psi,k)\,, 
\end{equation*}
with $f_n$ and $\Tilde{g}_n$ in \cref{ELB.1A} and \cref{ESBB}, respectively.
Recall $\widetilde{\Lyap}^n_{\upkappa,\xi}$ in \cref{ES4.3F}.
With $\xi\in\RR^d_+$ as in \cref{ELB.1A}, we have
\begin{equation*}
n^{\nicefrac{\upkappa}{2}}
\widetilde{\Lyap}^n_{\upkappa,\xi}(\Tilde{x}^n(\Breve{x}),h,\psi,k) 
\,=\, \Tilde{f}_n(\Breve{x},h,\psi,k) \qquad \forall\, (\Breve{x},h,\psi,k)\in\fD\,.
\end{equation*}
Hence, to prove \cref{EL4.1B},
it suffices to show that
\begin{equation}\label{PL4.1B} 
\Breve{\Lg}^{\Check{z}^n}_n \Tilde{f}_{n}(\Breve{x},h,\psi,k) 
\,\le\, \widetilde{C}_0  n^{\nicefrac{\upkappa}{2}} -  \widetilde{C}_1
\sum_{i\in\sI\setminus\Tilde{\cK}_n(x)}\xi_i\abs{\Bar{x}_i}^{\upkappa}
- \widetilde{C}_1 \sqrt{n}\sum_{i\in\Tilde{\cK}_n(\Breve{x})}
\xi_i\abs{\Bar{x}_i}^{\upkappa-1} \qquad \forall\, n > \Breve{n}\,,
\end{equation}
and all $(\Breve{x},h,\psi,k)\in\fD$,
where the generator $\Breve{\Lg}^{\Check{z}^n}_n$ is given in
\cref{ES4.3A}.
It is clear that $\cQ_{n,\psi}f_n(\Breve{x},h) = 0$.
Since $(\vartheta^n)^{-1}$ is of order $n^{\nicefrac{-1}{2}}$,
it follows by \cref{ES4.2D,ES4.3D} that
\begin{equation}\label{PL4.1G}
\begin{aligned}
\cQ_{n,0}\Tilde{g}_n(\Breve{x},h,0,k) &\,\le\, 
\sum_{i\in\sI\setminus\Tilde{\cK}_n(\Breve{x})}-\mu^n_i\xi_i\abs{\Bar{x}_i}^\upkappa 
+ \sum_{i\in\Tilde{\cK}_n(\Breve{x})}- 
\mu^n_i\xi_i\frac{n\rho_i\sum_{j\in\sI\setminus\sI_0}\rho_j}{\sum_{j\in\sI_0}\rho_j}
\abs{\Bar{x}_i}^{\upkappa-1} \\
&\mspace{100mu}
+ \epsilon_n\sum_{i\in\sI\setminus\Tilde{\cK}_n(\Breve{x})}
\order(\abs{\Bar{x}_i}^\upkappa)
+ \sum_{i\in\Tilde{\cK}_n(\Breve{x})}\order(\sqrt{n})
\order(\abs{\Bar{x}_i}^{\upkappa-1})\,,
\end{aligned}
\end{equation}
where $C$ is some positive constant and $\epsilon_n\to0$ as $n\to\infty$. 
Since all the moments of $d_1$ are finite by \cref{EA3.2A} and 
$(a+z)^\upkappa - a^\upkappa = \order(z)\order(a^{\upkappa-1}) 
+ \order(z^2)\order(a^{\upkappa-2})+ \dots + \order(z^\upkappa)$
for any $a,z\in\RR$, it is easy to verify that 
\begin{equation}\label{PL4.1D}
\cI_{n,1} \Hat{f}_n(\Breve{x},h,1,0) \,=\, \sum_{i\in\sI}
\sum^\upkappa_{j=1}\order(n^{\nicefrac{j}{2}})
\order(\abs{\Bar{x}_i}^{\upkappa-j})\,,
\end{equation}
 using also the fact that
\begin{equation*}
\beta^n_{\mathsf{u}}\int_{R_*} \biggl(\frac{n}{\vartheta^n}\mu^n_i\rho_iz\biggr)^j
F^{d_1}(\D{z}) \,=\, 
\beta^n_{\mathsf{u}}\biggl(\frac{n}{\vartheta^n}\biggr)^j(\mu^n_i\rho_i)^j
\Exp\bigl[(d_1)^j\bigr]
\,=\, \order(n^{\nicefrac{j}{2}})\quad \forall\,j>0\,,
\end{equation*}
which follows by
by \cref{A2.1,A2.2,EA3.2A}.
Then, for $\psi=1$, it follows by \cref{PL4.1D} and Young's inequality that
\begin{equation}\label{PL4.1H}
\begin{aligned}
\Breve{\Lg}^{\Check{z}^n}_n \Tilde{f}_n(\Breve{x},h,1,0) &\,\le\, 
\Lg^{\Check{z}^n}_n f_n(\Breve{x},h) + 
\overline{\Lg}^{\Check{z}^n}_{n,1}\Tilde{g}_n(\Breve{x},h,1,0) \\ 
&\mspace{50mu}+ Cn^{\nicefrac{\upkappa}{2}} 
+ \epsilon_n\sum_{i\in\sI\setminus\Tilde{\cK}_n(\Breve{x})}
\order(\abs{\Bar{x}_i}^\upkappa)
+ \sum_{i\in\Tilde{\cK}_n(\Breve{x})}\order(\sqrt{n})
\order(\abs{\Bar{x}_i}^{\upkappa-1})\,. 
\end{aligned}
\end{equation}
Note that the last two terms in \cref{ELB.1B} and
the last term in \cref{ELB.2A} are of smaller order than the second
and third terms on the right-hand side  of \cref{ELB.1B}, respectively.
Thus, applying \cref{LB.1,LB.2}, and using \cref{PL4.1H}, we obtain
\begin{equation}\label{PL4.1I}
n^{-\nicefrac{\upkappa}{2}} \Breve{\Lg}^{\Check{z}^n}_n \Tilde{f}_n(\Breve{x},h,1,0)
\,\le\, \widetilde{C}_0 
- \widetilde{C}_1\sum_{i\in\sI\setminus\Tilde{\cK}_n(\Tilde{x})}\abs{\Bar{x}_i}^\upkappa 
- \widetilde{C}_1\sum_{i\in\Tilde{\cK}_n(\Breve{x})}n^{-\nicefrac{1}{2}}
\bigl(\mu^n_i(\Check{z}^n_i - n\rho_i)
+\gamma^n_i\Check{q}^n_i\bigr)\abs{\Tilde{x}_i}^{\upkappa-1}
\end{equation}
for all large enough $n$,
where $\Tilde{x}$ is defined in \cref{D4.2}.
Since $\Check{q}^n_i \ge 0$ and $\Check{z}^n_i - n\rho_i>0$
for $i\in\Tilde\cK_n(\Breve{x})$, then by using \cref{PLB.1D} and \cref{PL4.1I},
we see that \cref{PL4.1B} holds when $y = 1$.

For $\psi = 0$, using \cref{PL4.1G}, Young's inequality,
and the fact that for $i\in\Tilde{\cK}_n(\Breve{x})$, $\Bar{x}_i > 0$,
we obtain
\begin{align*}
&\Breve{\Lg}^{\Check{z}^n}_n \Tilde{f}_{n}(\Breve{x},h,0,k) \,\le\,
\sum_{i\in\sI}\order(\sqrt{n})\order(\abs{\Bar{x}_i}^{\upkappa-1})
+ \sum_{i\in\sI}\order(n)\order(\abs{\Bar{x}_i}^{\upkappa-2})
+ Cn^{\nicefrac{\upkappa}{2}} \nonumber\\
& \mspace{50mu}
+ (\epsilon + \epsilon_n)\sum_{i\in\sI\setminus\Tilde{\cK}_n(\Breve{x})}
\xi_i\abs{\Bar{x}_i}^\upkappa
+ \sum_{i\in\sI\setminus\Tilde{\cK}_n(\Breve{x})}
\biggl(- \mu^n_i\xi_i\abs{\Bar{x}_i}^\upkappa
+ \gamma^n_i\xi_i\overline{q}^{n,k}_i\bigl(-\upkappa(\Bar{x}_i)^{\upkappa-1} 
+ \order(\abs{\Bar{x}_i}^{\upkappa-2})\bigr)\biggr) \nonumber\\
&\mspace{100mu} + \sum_{i\in\Tilde{\cK}_n(\Breve{x})}
-\frac{n\rho_i\sum_{j\in\sI\setminus\sI_0}\rho_j}{\sum_{j\in\sI_0}\rho_j}\mu^n_i\xi_i
\abs{\Bar{x}_i}^{\upkappa-1}
+ \overline{\Lg}^{\check{z}^n}_{n,0}\tilde{g}_n(\Breve{x},h,0,k)
\end{align*}
for some positive constant $C$ and sufficiently small $\epsilon>0$.
We proceed by invoking the argument in the proof of \cite[Lemma 5.1]{ABP15}.
The important difference here is that
\begin{equation*}
\Check{q}^n_i\bigl(\Breve{x} - n\upmu^n(z - k)\bigr)  \,=\,
\Tilde{\epsilon}_i\bigl(\Breve{x} - n\upmu^n(z - k)\bigr)
\bigl(\Bar{x}_i - n\mu_i\rho_i(z - k)\bigr)\\
+ \bar{\epsilon}_i\bigl(\Breve{x} - n\upmu^n(z - k)\bigr)
\sum_{j=1}^{i-1}\bigl(\Bar{x}_j - n\mu_j\rho_j(z - k)\bigr)\,,  
\end{equation*}
where the functions $\Tilde{\epsilon}_i, \bar{\epsilon}_i \colon \Rd \to [0,1]$,
for $i\in\sI$. Since $\Tilde{\epsilon}_i$ and $\bar{\epsilon}_i$
are bounded,
we have some additional terms which are bounded by 
$C \int_{\RR_*}n\mu_i\rho_i(y - k)\,\Tilde{F}^{d^n_1}_{\Breve{x},k}(\D{y})$
for some positive constant $C$.
Therefore, these are of order $\sqrt{n}$ by \cref{PL5.4C}.
Thus, repeating the argument in the proof of \cref{LB.1},
and applying \cref{LB.2}, we deduce that
\cref{PL4.1B} holds with $\psi=0$. This completes the proof.
\end{proof}

\begin{proof}[Proof of \cref{L5.2}]
The proof mimics that of \cref{T4.1}.
We sketch the proof when $\sI_0$ is empty. 
Using the estimate
\begin{equation}\label{PL5.2A}
\order(q^n_i)\order(\abs{\Bar{x}_i}^{m-1}) \,\le\, 
\epsilon^{1-m}\bigl(\order(q^n_i)\bigr)^m + 
\epsilon\bigl(\order(\abs{\Bar{x}_i}^{m-1})\bigr)^{\nicefrac{m}{m-1}}
\end{equation}
for any $\epsilon>0$, which follows by Young's inequality,
we deduce that, for some positive constants $\{c_k \colon k=1,2,3\}$, we have
\begin{equation}\label{PL5.2B}
\Lg^{z^n}_n f_n(\Breve{x},h) \,\le\, c_1 n^{\nicefrac{m}{2}} + 
c_2(\langle e,q^n\rangle)^m - c_3\sum_{i\in\sI}\xi_i \abs{\Bar{x}_i}^m
\quad \forall\, (\Breve{x},h)\in\RR^d_+\times\RR^d_+\,,
\end{equation}
 and all large enough $n$.
Note that \cref{LB.2} holds for all $z^n\in\fZsm^n$.
Then, we may repeat the steps in the proof of \cref{L4.1}, except that
here we use
\begin{equation}\label{PL5.2C}
\begin{aligned}
&(\Tilde{x}_i)^{m-1}\int_{\RR_*}\Hat{q}^n_i\bigl(\Breve{x} - n\upmu^n(y - k),z^n\bigr)\,
\Tilde{F}^{d^n_1}_{\Breve{x},k}(\D{y}) \\
&\mspace{150mu}
\,\le\, \epsilon\abs{\Bar{x}_i}^{m}
 + \epsilon^{1-m}\Bigl(\Exp
\bigl[\Hat{q}^n_i\bigl(\Breve{x} - n\upmu^n(d^n_1 - k),z^n\bigr)
\,|\, d^n_1 > k\bigr]\Bigr)^m \,,
\end{aligned}
\end{equation}
where $\Hat{q}^n = n^{\nicefrac{-1}{2}}q^n$,
with $\epsilon>0$ chosen sufficiently small.
Since $\Hat{q}^n_i(\breve{x},z^n) \le \langle e,\Tilde{x} \rangle^+$,
it follows by \cref{PL5.4C} that
\begin{equation}\label{PL5.2E}
\Exp\bigl[\Hat{q}^n_i\bigl(\Breve{x} - n\upmu^n(d^n_1 - k),z^n\bigr)
\bigm| d^n_1 > k\bigr] \,\le\, c_4(1 + \langle e,\Tilde{x} \rangle^+)\,.
\end{equation}
Thus, by the same calculation in \cref{T4.1},
and using \cref{PL5.2A,PL5.2B,PL5.2C,PL5.2E}, we obtain 
\begin{equation}\label{PL5.2D}
\begin{aligned}
\Exp^{z^n}\biggl[\int_0^{T}\abs{\widetilde{X}^n(s)}^m\biggr]
\,\le\, C_1(T + \abs{\Hat{X}^n(0)}^m)
+ C_2\Exp^{z^n}
\biggl[\int_0^{T}\bigl(1 + \langle e,\widetilde{X}^n(s)\rangle^{+}\bigr)^m\,\D{s}\biggr] 
\end{aligned}
\end{equation}
for all large enough $n$, and $\{z^n\in\fZsm^n\colon n\in\NN\}$.
Since $\sup_n \Hat{J}(\Hat{X}^n(0),{z}^n)<\infty$,
it follows by \cref{PT4.1G} that
\begin{equation*}
\sup_{n}\,\limsup_{T\rightarrow\infty}\,\frac{1}{T}\Exp\biggl[
\int_0^{T}\bigl(\langle e,\widetilde{X}^n(s)\rangle^{+}\bigr)^m\,\D{s}\biggr]
\,<\, \infty\,.
\end{equation*}
Therefore, dividing both sides of \cref{PL5.2D} by $T$, 
taking $T\rightarrow\infty$ and using \cref{PT4.1G} again, 
we obtain \cref{EL5.2A}.
We may show that the result also holds when $\sI_0$ is nonempty by repeating 
the above argument and applying \cref{LB.2}. This completes the proof. 
\end{proof}

\section*{Acknowledgments}
This research was supported in part by 
the Army Research Office through grant W911NF-17-1-001,
in part by the National Science Foundation through grants DMS-1715210,
CMMI-1538149 and DMS-1715875,
and in part by the Office of Naval Research through grant N00014-16-1-2956
and was approved for public release under DCN \#43-5442-19.

\def\polhk#1{\setbox0=\hbox{#1}{\ooalign{\hidewidth
  \lower1.5ex\hbox{`}\hidewidth\crcr\unhbox0}}}
\begin{bibdiv}
\begin{biblist}

\bib{AMR04}{article}{
      author={Atar, Rami},
      author={Mandelbaum, Avi},
      author={Reiman, Martin~I.},
       title={Scheduling a multi class queue with many exponential servers:
  asymptotic optimality in heavy traffic},
        date={2004},
        ISSN={1050-5164},
     journal={Ann. Appl. Probab.},
      volume={14},
      number={3},
       pages={1084\ndash 1134},
      review={\MR{2071417}},
}

\bib{Atar05}{article}{
      author={Atar, R.},
       title={Scheduling control for queueing systems with many servers:
  asymptotic optimality in heavy traffic},
        date={2005},
        ISSN={1050-5164},
     journal={Ann. Appl. Probab.},
      volume={15},
      number={4},
       pages={2606\ndash 2650},
         url={https://doi.org/10.1214/105051605000000601},
      review={\MR{2187306}},
}

\bib{AMS09}{article}{
      author={Atar, R.},
      author={Mandelbaum, A.},
      author={Shaikhet, G.},
       title={Simplified control problems for multiclass many-server queueing
  systems},
        date={2009},
        ISSN={0364-765X},
     journal={Math. Oper. Res.},
      volume={34},
      number={4},
       pages={795\ndash 812},
         url={https://doi.org/10.1287/moor.1090.0404},
      review={\MR{2573496}},
}

\bib{ABP15}{article}{
      author={Arapostathis, Ari},
      author={Biswas, Anup},
      author={Pang, Guodong},
       title={Ergodic control of multi-class {$M/M/N+M$} queues in the
  {H}alfin-{W}hitt regime},
        date={2015},
        ISSN={1050-5164},
     journal={Ann. Appl. Probab.},
      volume={25},
      number={6},
       pages={3511\ndash 3570},
      review={\MR{3404643}},
}

\bib{AP18}{article}{
      author={Arapostathis, Ari},
      author={Pang, Guodong},
       title={Infinite-horizon average optimality of the {N}-network in the
  {H}alfin-{W}hitt regime},
        date={2018},
     journal={Math. Oper. Res.},
      volume={43},
      number={3},
       pages={838\ndash 866},
      review={\MR{3846075}},
}

\bib{AP19}{article}{
      author={Arapostathis, Ari},
      author={Pang, Guodong},
       title={Infinite horizon asymptotic average optimality for large-scale
  parallel server networks},
        date={2019},
     journal={Stochastic Process. Appl.},
      volume={129},
      number={1},
       pages={283\ndash 322},
      review={\MR{3906999}},
}

\bib{BGL14}{article}{
      author={Budhiraja, A.},
      author={Ghosh, A.},
      author={Liu, X.},
       title={Scheduling control for {M}arkov-modulated single-server
  multiclass queueing systems in heavy traffic},
        date={2014},
     journal={Queueing Syst.},
      volume={78},
      number={1},
       pages={57\ndash 97},
      review={\MR{3238008}},
}

\bib{RMH13}{article}{
      author={Kumar, R.},
      author={Lewis, M.~E.},
      author={Topaloglu, H.},
       title={Dynamic service rate control for a single-server queue with
  {M}arkov-modulated arrivals},
        date={2013},
     journal={Naval Res. Logist.},
      volume={60},
      number={8},
       pages={661\ndash 677},
      review={\MR{3146992}},
}

\bib{LQA17}{article}{
      author={Xia, L.},
      author={He, Q.},
      author={Alfa, A.~S.},
       title={Optimal control of state-dependent service rates in a {MAP}/{M}/1
  queue},
        date={2017},
     journal={IEEE Trans. Automat. Control},
      volume={62},
      number={10},
       pages={4965\ndash 4979},
      review={\MR{3708873}},
}

\bib{ADPZ19}{article}{
      author={Arapostathis, Ari},
      author={Das, Anirban},
      author={Pang, Guodong},
      author={Zheng, Yi},
       title={Optimal control of {M}arkov-modulated multiclass many-server
  queues},
        date={2019},
     journal={Stochastic Systems},
      volume={9},
      number={2},
       pages={155\ndash 181},
}

\bib{JMTW17}{article}{
      author={Jansen, H.~M.},
      author={Mandjes, M.},
      author={De~Turck, K.},
      author={Wittevrongel, S.},
       title={Diffusion limits for networks of {M}arkov-modulated
  infinite-server queues},
        date={2017},
     journal={ArXiv e-prints},
      volume={1712.04251},
      eprint={https://arxiv.org/abs/1712.04251},
}

\bib{PW09B}{article}{
      author={Pang, G.},
      author={Whitt, W.},
       title={Service interruptions in large-scale service systems},
        date={2009},
     journal={Management Science},
      volume={55},
      number={9},
       pages={1499\ndash 1512},
}

\bib{PW09}{article}{
      author={Pang, G.},
      author={Whitt, W.},
       title={Heavy-traffic limits for many-server queues with service
  interruptions},
        date={2009},
        ISSN={0257-0130},
     journal={Queueing Syst.},
      volume={61},
      number={2-3},
       pages={167\ndash 202},
         url={https://doi.org/10.1007/s11134-009-9104-2},
      review={\MR{2485887}},
}

\bib{LPZ16}{article}{
      author={Lu, H.},
      author={Pang, G.},
      author={Zhou, Y.},
       title={{$G/GI/N(+GI)$} queues with service interruptions in the
  {H}alfin-{W}hitt regime},
        date={2016},
        ISSN={1432-2994},
     journal={Math. Methods Oper. Res.},
      volume={83},
      number={1},
       pages={127\ndash 160},
         url={https://doi.org/10.1007/s00186-015-0523-z},
      review={\MR{3464192}},
}

\bib{LP17}{article}{
      author={Lu, H.},
      author={Pang, G.},
       title={Heavy-traffic limits for an infinite-server fork-join queueing
  system with dependent and disruptive services},
        date={2017},
        ISSN={0257-0130},
     journal={Queueing Syst.},
      volume={85},
      number={1-2},
       pages={67\ndash 115},
         url={https://doi.org/10.1007/s11134-016-9505-y},
      review={\MR{3604118}},
}

\bib{PZ16}{article}{
      author={Pang, G.},
      author={Zhou, Y.},
       title={{$G/G/\infty$} queues with renewal alternating interruptions},
        date={2016},
        ISSN={0001-8678},
     journal={Adv. in Appl. Probab.},
      volume={48},
      number={3},
       pages={812\ndash 831},
         url={https://doi.org/10.1017/apr.2016.29},
      review={\MR{3568893}},
}

\bib{APZ19a}{article}{
      author={Arapostathis, A.},
      author={Pang, G.},
      author={Zheng, Y.},
       title={Ergodic control of diffusions with compound {P}oisson jumps under
  a general structural hypothesis},
        date={2019},
     journal={ArXiv e-prints},
      volume={1908.01068},
      eprint={https://arxiv.org/abs/1908.01068},
}

\bib{Atar11}{article}{
      author={Atar, R.},
      author={Giat, C.},
      author={Shimkin, N.},
       title={On the asymptotic optimality of the {$c\mu/\theta$} rule under
  ergodic cost},
        date={2011},
        ISSN={0257-0130},
     journal={Queueing Syst.},
      volume={67},
      number={2},
       pages={127\ndash 144},
         url={https://doi.org/10.1007/s11134-010-9206-x},
      review={\MR{2771197}},
}

\bib{Takis-99}{article}{
      author={Konstantopoulos, T.},
      author={Last, G.},
       title={On the use of {L}yapunov function methods in renewal theory},
        date={1999},
        ISSN={0304-4149},
     journal={Stochastic Process. Appl.},
      volume={79},
      number={1},
       pages={165\ndash 178},
      review={\MR{1670534}},
}

\bib{MT-III}{article}{
      author={Meyn, Sean~P.},
      author={Tweedie, R.~L.},
       title={Stability of {M}arkovian processes. {III}. {F}oster-{L}yapunov
  criteria for continuous-time processes},
        date={1993},
     journal={Adv. in Appl. Probab.},
      volume={25},
      number={3},
       pages={518\ndash 548},
      review={\MR{1234295}},
}

\bib{Patrick-99}{book}{
      author={Billingsley, P.},
       title={Convergence of probability measures},
     edition={Second},
      series={Wiley Series in Probability and Statistics: Probability and
  Statistics},
   publisher={John Wiley \& Sons, Inc., New York},
        date={1999},
        ISBN={0-471-19745-9},
         url={https://doi.org/10.1002/9780470316962},
        note={A Wiley-Interscience Publication},
      review={\MR{1700749}},
}

\bib{WW-02}{book}{
      author={Whitt, W.},
       title={Stochastic-process limits},
      series={Springer Series in Operations Research},
   publisher={Springer-Verlag, New York},
        date={2002},
        ISBN={0-387-95358-2},
        note={An introduction to stochastic-process limits and their
  application to queues},
      review={\MR{1876437}},
}

\bib{Dai-95}{article}{
      author={Dai, J.~G.},
       title={On positive {H}arris recurrence of multiclass queueing networks:
  a unified approach via fluid limit models},
        date={1995},
        ISSN={1050-5164},
     journal={Ann. Appl. Probab.},
      volume={5},
      number={1},
       pages={49\ndash 77},
  url={http://links.jstor.org/sici?sici=1050-5164(199502)5:1<49:OPHROM>2.0.CO;2-4&origin=MSN},
      review={\MR{1325041}},
}

\bib{APS19}{article}{
      author={Arapostathis, Ari},
      author={Pang, Guodong},
      author={Sandri\'{c}, Nikola},
       title={Ergodicity of a {L}\'{e}vy-driven {SDE} arising from multiclass
  many-server queues},
        date={2019},
     journal={Ann. Appl. Probab.},
      volume={29},
      number={2},
       pages={1070\ndash 1126},
      review={\MR{3910024}},
}

\bib{AHPS19}{collection}{
      author={Arapostathis, Ari},
      author={Hmedi, Hassan},
      author={Pang, Guodong},
      author={Sandri\'{c}, Nikola},
      editor={Yin, George},
      editor={Zhang, Qing},
       title={Uniform polynomial rates of convergence for a class of
  {L}\'evy-driven controlled {SDE}s arising in multiclass many-server queues},
      series={{M}odeling, {S}tochastic {C}ontrol, {O}ptimization, and
  {A}pplications. The IMA Volumes in Mathematics and its Applications},
   publisher={Springer, Cham},
        date={2019},
      volume={164},
}

\bib{Davis-84}{article}{
      author={Davis, M. H.~A.},
       title={Piecewise-deterministic {M}arkov processes: a general class of
  nondiffusion stochastic models},
        date={1984},
        ISSN={0035-9246},
     journal={J. Roy. Statist. Soc. Ser. B},
      volume={46},
      number={3},
       pages={353\ndash 388},
        note={With discussion},
      review={\MR{790622}},
}

\bib{Ross96}{book}{
      author={Ross, Sheldon~M.},
       title={Stochastic processes},
     edition={Second},
   publisher={John Wiley \& Sons, Inc., New York},
        date={1996},
        ISBN={0-471-12062-6},
      review={\MR{1373653}},
}

\bib{ACPZ19}{article}{
      author={Arapostathis, Ari},
      author={Caffarelli, Luis},
      author={Pang, Guodong},
      author={Zheng, Yi},
       title={Ergodic control of a class of jump diffusions with finite
  {L}\'{e}vy measures and rough kernels},
        date={2019},
     journal={SIAM J. Control Optim.},
      volume={57},
      number={2},
       pages={1516\ndash 1540},
      review={\MR{3942851}},
}

\bib{Krichagina}{article}{
      author={Krichagina, E.~V.},
      author={Taksar, M.~I.},
       title={Diffusion approximation for {$GI/G/1$} controlled queues},
        date={1992},
        ISSN={0257-0130},
     journal={Queueing Systems Theory Appl.},
      volume={12},
      number={3-4},
       pages={333\ndash 367},
         url={https://doi.org/10.1007/BF01158808},
      review={\MR{1200872}},
}

\bib{PTW07}{article}{
      author={Pang, G.},
      author={Talreja, R.},
      author={Whitt, W.},
       title={Martingale proofs of many-server heavy-traffic limits for
  {M}arkovian queues},
        date={2007},
        ISSN={1549-5787},
     journal={Probab. Surv.},
      volume={4},
       pages={193\ndash 267},
         url={https://doi.org/10.1214/06-PS091},
      review={\MR{2368951}},
}

\bib{IW-71}{article}{
      author={Iglehart, D.~L.},
      author={Whitt, W.},
       title={The equivalence of functional central limit theorems for counting
  processes and associated partial sums},
        date={1971},
        ISSN={0003-4851},
     journal={Ann. Math. Statist.},
      volume={42},
       pages={1372\ndash 1378},
         url={https://doi.org/10.1214/aoms/1177693249},
      review={\MR{0310941}},
}

\end{biblist}
\end{bibdiv}

\end{document}